\newtheorem{theorem}{Theorem}[section]
\newtheorem{lemma}[theorem]{Lemma}
\newtheorem{corollary}[theorem]{Corollary}
\newtheorem{proposition}[theorem]{Proposition}
\newtheorem{conjecture}[theorem]{Conjecture}
\theoremstyle{definition}
\newtheorem{definition}[theorem]{Definition}
\theoremstyle{remark}
\newtheorem{remark}[theorem]{Remark}
\numberwithin{equation}{section}
\DeclareMathOperator{\Char}{char}
\DeclareMathOperator{\rank}{rk}
\DeclareMathOperator{\T}{T}
\DeclareMathOperator{\Hom}{Hom}
\DeclareMathOperator{\spec}{Spec}
\DeclareMathOperator{\td}{td}
\DeclareMathOperator{\Coh}{Coh}
\DeclareMathOperator{\D}{D}
\DeclareMathOperator{\K}{K}
\DeclareMathOperator{\ch}{ch}
\DeclareMathOperator{\ext}{ext}
\DeclareMathOperator{\Ext}{Ext}
\begin{document}
\title{Stability conditions on threefolds with vanishing Chern classes}

\author{Hao Max Sun}

\address{Department of Mathematics, Shanghai Normal University, Shanghai 200234, People's Republic of China}

\email{hsun@shnu.edu.cn, hsunmath@gmail.com}


\subjclass[2000]{14F17, 14F05}

\date{June 6, 2019}

\keywords{Bridgeland stability condition, Bogomolov-Gieseker
inequality, positive characteristic, Fujita's conjecture, Kodaira
vanishing theorem}

\begin{abstract}
We prove the Bogomolov-Gieseker type inequality conjectured by
Bayer, Macr\`i and Toda for threefolds with semistable tangent
bundles and vanishing Chern classes in any characteristic, which was
originally proved by Bayer, Macr\`i and Stellari in characteristic
zero. This gives the existence of Bridgeland stability conditions on
such threefolds. As applications, we obtain Reider type theorem and
confirm Fujita's conjecture for such threefolds in any
characteristic.
\end{abstract}

\maketitle

\setcounter{tocdepth}{1}

\tableofcontents

\section{Introduction}\label{S1}
Since Bridgeland's introduction in \cite{Bri1}, stability conditions
for triangulated categories have drawn a lot of attentions, and have
been investigated intensively. The existence of stability conditions
on three-dimensional varieties is often considered the biggest open
problem in the theory of Bridgeland stability conditions.

In \cite{BMT}, Bayer, Macr\`i and Toda introduced a conjectural
construction of Bridgeland stability conditions for any projective
threefold. Here the problem was reduced to proving a
Bogomolov-Gieseker type inequality for the third Chern character of
tilt-stable objects. It has been shown to hold for some Fano 3-folds
\cite{Mac2, Sch1, Li1, BMSZ, Piy}, abelian 3-folds \cite{MP1, MP2,
BMS}, \'etale quotients of abelian 3-folds \cite{BMS}, toric
threefolds \cite{BMSZ}, product threefolds of projective spaces and
abelian varieties \cite{Kos}, threefolds with nef tangent bundles
\cite{Kos2} and quintic threefolds \cite{Li2}. However,
counterexamples of the original Bogomolov-Gieseker type inequality
are found (see \cite{Sch2}). The modification of the original
inequality for any Fano threefolds is proved in \cite{BMSZ, Piy},
and it still implies the existence of stability conditions on such
threefolds. Recently, Yucheng Liu \cite{Liu} showed the existence of
stability conditions on product varieties. His method is different
from that of Bayer-Macr\`i-Toda.

In this paper, we prove the original Bogomolov-Gieseker type
inequality for threefolds with semistable tangent bundles and
vanishing Chern classes in any characteristic. This gives the
existence of Bridgeland stability conditions on such threefolds.

\begin{theorem}\label{main}
Let $X$ be a smooth projective threefold defined over an
algebraically closed field $k$, and let $H$ be an ample divisor on
$X$. Assume that $K_X\sim_{num}0$, $Hc_2(X)=0$ and $T_X$ is
$\mu_H$-semistable. Then for any $\nu_{\alpha,\beta}$-stable object
$E$ with $\nu_{\alpha,\beta}(E)=0$, we have
$$\ch_3^{\beta}(E)\leq\frac{\alpha^2}{6}H^2\ch_1^{\beta}(E).$$
\end{theorem}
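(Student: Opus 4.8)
\emph{Strategy.} I would follow the general philosophy of Bayer--Macr\`i--Stellari — reducing the inequality to a Bogomolov--Gieseker-type statement for slope-semistable sheaves — while replacing their characteristic-zero input by positive-characteristic sheaf theory. The hypotheses are used as follows: $K_X\sim_{num}0$ together with $Hc_2(X)=0$ makes the Riemann--Roch data of $X$ along $H$-twists agree with that of an abelian threefold (one gets $\chi(\mathcal O_X)=\tfrac{1}{24}(-K_X)\cdot c_2(X)=0$ and $\chi(\mathcal O_X(mH))=\tfrac{m^{3}}{6}H^{3}$), and, combined with the Hodge index theorem and Langer's Bogomolov inequality, they force the slope-semistable sheaves arising in the reduction to be numerically flat; the $\mu_H$-semistability of $T_X$ makes this analysis apply to $T_X$ itself and is needed to run the reduction.

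\emph{Reductions.} A $\nu_{\alpha,\beta}$-stable $E$ with $\nu_{\alpha,\beta}(E)=0$ automatically has $H^{2}\ch_1^{\beta}(E)>0$, and by tensoring with line bundles and rescaling one may take $\beta\in\mathbb Q$. Rewriting $\nu_{\alpha,\beta}(E)=0$ as $H\ch_2^{\beta}(E)=\tfrac{\alpha^{2}}{2}H^{3}\ch_0(E)$, the class of $E$ lies on a wall; using local finiteness of walls, the Harder--Narasimhan and wall-crossing formalism for tilt stability, and the known weak Bogomolov--Gieseker inequality (superadditivity of $\overline{\Delta}_H$ along tilt-destabilizing exact sequences), one reduces to the case where, up to shift, $E$ is a shift of a $\mu_H$-slope-semistable torsion-free sheaf $F$ which is tilt-semistable on that wall; twisting so that $\mu_H(F)=0$, i.e.\ $c_1(F)\cdot H^{2}=0$, the target becomes $\ch_3^{\beta}(F)\le\tfrac{\alpha^{2}}{6}H^{2}\ch_1^{\beta}(F)$ for the $\beta<0$ on the wall, and a direct computation (using $H^{2}\ch_1(F)=0$) rewrites this as $\ch_3(F)\le\tfrac{2\beta}{3}H\ch_2(F)$, whose right-hand side is $\ge 0$.

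\emph{The sheaf input.} Take $F$ reflexive. By the Hodge index theorem $c_1(F)^{2}\cdot H\le 0$; restricting to a general surface $S\in|mH|$, where $F|_S$ is $\mu$-semistable by the Mehta--Ramanathan theorem, Langer's Bogomolov inequality reads $(2r\,c_2(F)-(r-1)\,c_1(F)^{2})\cdot H\ge 0$, so $H\ch_2(F)=\tfrac{1}{2}c_1(F)^{2}\cdot H-c_2(F)\cdot H\le\tfrac{1}{2r}c_1(F)^{2}\cdot H\le 0$. Inspecting the reduction, the sheaves $F$ that occur have extremal discriminant, so in fact $c_1(F)^{2}\cdot H=0$ --- hence $c_1(F)\equiv_{num}0$ by the equality case of Hodge index --- and $c_2(F)\cdot H=0$, i.e.\ $\Delta_H(F)\cdot H=0$. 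Then $F|_S$ is $\mu$-semistable with vanishing discriminant, so it is strongly $\mu$-semistable by Langer's results; since this persists for a general $S$ and under Frobenius pullback, $F$ itself is strongly $\mu_H$-semistable on $X$ with $c_1(F)\equiv_{num}0$ and $\Delta_H(F)\cdot H=0$, i.e.\ $F$ is numerically flat. A numerically flat sheaf has numerically trivial Chern classes, whence $\ch_3(F)=0\le\tfrac{2\beta}{3}H\ch_2(F)$, as required. (It is the simplified Riemann--Roch of $X$ that makes the corresponding identity $\chi(F(mH))=\rank(F)\,\chi(\mathcal O_X(mH))$ carry no obstructing correction terms, and the same analysis applied to $T_X$ shows the standing hypotheses are self-consistent.)

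\emph{Bootstrap and the hard part.} Running the Bayer--Macr\`i--Toda argument in reverse --- feeding the sheaf inequality into the wall-and-chamber structure and inducting on $\ch_0^{\beta}(E)$ and $H^{2}\ch_1^{\beta}(E)$, both of which strictly decrease for the tilt-destabilizing sub- and quotient objects at each wall, with the sheaf case as the base --- yields $\ch_3^{\beta}(E)\le\tfrac{\alpha^{2}}{6}H^{2}\ch_1^{\beta}(E)$ for all $\nu_{\alpha,\beta}$-stable $E$ with $\nu_{\alpha,\beta}(E)=0$. The main difficulties I anticipate are: (i) the positive-characteristic sheaf theory --- without Yau's theorem or unitary flatness, the chain ``$\mu$-semistable with vanishing discriminant $\Rightarrow$ strongly semistable $\Rightarrow$ numerically flat $\Rightarrow$ numerically trivial Chern classes'' must be pushed through Frobenius pullbacks and Langer's theorems, handling inseparable phenomena; (ii) showing that the sheaves $F$ produced by the reduction really are numerically flat --- this is the step that genuinely uses the hypotheses on $X$, not merely slope-semistability; and (iii) checking that the Bayer--Macr\`i--Toda reduction and induction, and the foundations of tilt stability they rest on, go through verbatim over an arbitrary algebraically closed field.
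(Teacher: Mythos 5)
Your overall architecture has a genuine gap at its core: the ``reductions'' and ``bootstrap'' steps, which claim that wall-crossing plus induction on $\ch_0^{\beta}$ and $H^2\ch_1^{\beta}$ reduce the inequality for an arbitrary tilt-stable object with $\nu_{\alpha,\beta}(E)=0$ to a statement about $\mu_H$-semistable torsion-free sheaves of extremal discriminant. No such reduction exists. The known reduction (\cite[Theorem 5.4]{BMS}, Theorem \ref{reduction} in the paper) only brings you to $\overline{\beta}$-stable objects with $\overline{\beta}(E)\in[0,1)$ and $\ch_0(E)\geq0$, and this class contains ideal sheaves of points and curves, torsion sheaves, and genuine two-term complexes; for these, $\ch_3^{\overline{\beta}}(E)\leq0$ is precisely the hard content and is \emph{not} a formal consequence of the classical Bogomolov inequality plus the wall-and-chamber formalism. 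If it were, Conjecture \ref{Conjecture} would hold on every threefold where classical Bogomolov holds, contradicting Schmidt's counterexample on the blow-up of $\mathbb{P}^3$. Concretely, your base case is already false as stated: the ideal sheaf $I_x$ of a point is slope-stable and $\overline{\beta}$-stable with $\overline{\beta}(I_x)=0$, $c_1=0$ and $\overline{\Delta}_H=0$ (``extremal discriminant''), yet it is not numerically flat and $\ch_3(I_x)=-1\neq0$; the chain ``extremal discriminant $\Rightarrow$ numerically flat $\Rightarrow$ $\ch_3=0$'' therefore breaks exactly where the $\ch_3$ information is supposed to come from (passing to the reflexive hull also changes $\ch_3$, so ``take $F$ reflexive'' cannot be done for free). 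A secondary gap: twisting by line bundles only moves $\beta$ by integers, so you cannot ``take $\beta\in\mathbb{Q}$''; the irrational $\overline{\beta}(E)$ case needs a separate argument.

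The paper's actual mechanism for bounding $\ch_3$ is entirely different and is where the hypotheses on $X$ enter. Working in characteristic $p>0$ (characteristic $0$ is deduced by spreading out, the opposite direction from your implicit use of char-$0$ intuition), one computes $\chi(\mathcal{O}_X,(F^n)^*E)=p^{3n}\ch_3^{\overline{\beta}}(E)+O(p^{2n})$ by Riemann--Roch, and bounds $\hom(\mathcal{O}_X,(F^n)^*E)$ and $\ext^2(\mathcal{O}_X,(F^n)^*E)$ by $O(p^{2n})$. The vanishing inputs come from Proposition \ref{stab}: the Frobenius pushforwards $(F^l)_*\mathcal{O}_X(L)$ are $\mu_H$-semistable with $\overline{\Delta}_H=0$ (Lemmas \ref{Chern}, \ref{Frob}, using Xiaotao Sun's theorem on Frobenius direct images and Mehta--Ramanathan, which is where $K_X\sim_{num}0$, $Hc_2(X)=0$ and the semistability of $T_X$ are used), so their Jordan--H\"older factors are tilt-stable and have no maps to, or extensions by, $E$ on the relevant side of $\beta=\mu$; the remaining cohomology on a general surface in $|H|$ is controlled by Langer's restriction and $h^0$-estimates (Lemma \ref{est}), and the irrational case is handled by Dirichlet approximation. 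Letting $n\to\infty$ gives $\ch_3^{\overline{\beta}}(E)\leq0$ for all $\overline{\beta}$-stable objects, which is what Theorem \ref{reduction} requires; your proposal supplies no substitute for this step.
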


By \cite[Theorem 2]{Sim} and \cite[Theorem 4.1]{Langer4}, one sees
that all the Chern classes of $X$ are vanishing under the
assumptions in Theorem \ref{main}. In characteristic zero, a well
known consequence of Yau's proof of Calabi's conjecture shows that
$X$ has a finite \'etale cover by an abelian variety if and only if
$K_X\sim_{num}0$ and $Hc_2(X)=0$. And in this case, the
semistability assumption of $T_X$ is automatically satisfied. Thus
if $\Char(k)=0$, Theorem \ref{main} is a consequence of
\cite[Theorem 1.1]{BMS} which showed the same inequality for abelian
threefolds.

In positive characteristic not much is known about the
characterizing projective varieties with vanishing Chern classes.
And there are threefolds with vanishing Chern classes which do not
have a finite \'etale cover by an abelian variety (see, e.g.,
\cite[Section 7.3]{Langer3}). Hence in some sense, Theorem
\ref{main} is new in positive characteristic. The semistable
assumption of $T_X$ in the theorem guarantees the classical
Bogomolov-Gieseker inequality to be satisfied on $X$, so that the
$\nu_{\alpha,\beta}$-stability is well defined.

The strategy of the proof is the following.  In the case of $\Char
(k)=p>0$ we compute the Euler characteristic $\chi(\mathcal{O}_X,
(F^n)^*E)$ of the pullback of $E$ by the $n$-th iteration of the
Frobenius morphism. By the Riemann-Roch theorem, one sees that
$\chi(\mathcal{O}_X, (F^n)^*E)$ is a polynomial of degree $3n$ with
respect to $p$ and its leading coefficient is $\ch_3(E)$. On the
other hand, using the tilt-stability of the Frobenius pushforward of
some locally free sheaves (see Proposition \ref{stab}), we can show
that $\ext^i(\mathcal{O}_X, (F^n)^*E)=O(p^{2n})$, for even $i$.
Taking $n\rightarrow+\infty$, we obtain an inequality for the third
Chern character of $E$. The characteristic zero case follows from
the standard spreading out technique.

\subsection*{Applications}

Theorem \ref{main} and \cite[Theorem 4.1]{BBMT} give the following
Reider type theorem:
\begin{corollary}\label{Reider}
Under the situation of Theorem \ref{main}, fix a non-negative
integer $d$. If the ample divisor $H$ satisfies
\begin{enumerate}
\item $H^3>49d$;
\item $H^2D\geq7d$ for every integral divisor class $D$ with
$H^2D>0$ and $HD^2<d$;
\item $HC\geq3d$ for any curve $C\subset X$,
\end{enumerate}
then $H^1(X, I_Z(K_X+H))=0$ for any zero-dimensional subscheme
$Z\subset X$ of length $d$. In particular, Kodaira's vanishing
theorem $H^1(X,\mathcal{O}_X(K_X+H))=0$ holds.
\end{corollary}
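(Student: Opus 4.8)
The plan is to deduce this from Theorem \ref{main} together with \cite[Theorem 4.1]{BBMT}. That cited result is a Reider-type statement which, for a threefold $X$ on which the conjectural Bogomolov-Gieseker inequality holds (i.e.\ stability conditions exist via the Bayer-Macr\`i-Toda construction), gives the vanishing $H^1(X, I_Z(K_X+H))=0$ under numerical positivity hypotheses on $H$ relative to the length $d$ of $Z$. Since Theorem \ref{main} establishes precisely the needed inequality $\ch_3^\beta(E)\le\frac{\alpha^2}{6}H^2\ch_1^\beta(E)$ for $\nu_{\alpha,\beta}$-stable objects with $\nu_{\alpha,\beta}(E)=0$ on any $X$ satisfying $K_X\sim_{num}0$, $Hc_2(X)=0$, and $T_X$ $\mu_H$-semistable, the hypothesis of \cite[Theorem 4.1]{BBMT} is met on our $X$. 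So the first step is simply to verify that the three numerical conditions (1)--(3) in the corollary are exactly (or imply) the numerical hypotheses appearing in the statement of \cite[Theorem 4.1]{BBMT}; this should be a direct matching of inequalities, since $49=7^2$ and the constants $7d$, $3d$ are the standard Reider-type thresholds for adjoint bundles in dimension three coming from the stability machinery.

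For the "in particular" clause, I would take $Z=\emptyset$, so $d=0$ and $I_Z=\mathcal{O}_X$. Then conditions (1)--(3) read $H^3>0$, $H^2D\ge 0$ whenever $H^2D>0$, and $HC\ge 0$ for every curve $C$ — all of which hold automatically since $H$ is ample (so $H^3>0$, $H^2D>0$ and $HC>0$ for every effective curve or divisor by the projection formula and Nakai-Moishezon). Hence the corollary applies with $d=0$ and yields $H^1(X,\mathcal{O}_X(K_X+H))=0$, which is Kodaira vanishing for the ample divisor $H$. Note this is genuinely new content in positive characteristic, where Kodaira vanishing can fail in general.

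I do not expect any serious obstacle here: the corollary is a formal consequence of two already-established results, and the only real work is bookkeeping. The one point that requires care is making sure the normalization conventions for $\ch^\beta$, the tilt slope $\nu_{\alpha,\beta}$, and the BG-inequality in \cite{BBMT} agree with those of Theorem \ref{main} (different papers sometimes rescale by factors involving $H^3$ or shift $\beta$), so that "the inequality of Theorem \ref{main}" is literally the hypothesis invoked in \cite[Theorem 4.1]{BBMT} rather than merely a cousin of it. Once that identification is checked, the proof is a single sentence: apply \cite[Theorem 4.1]{BBMT}, whose hypothesis holds by Theorem \ref{main}, and specialize to $d=0$ for the Kodaira vanishing consequence.
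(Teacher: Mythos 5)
There is a genuine gap. Your proposal treats the corollary as a pure citation exercise: apply \cite[Theorem 4.1]{BBMT}, whose Bogomolov--Gieseker hypothesis is supplied by Theorem \ref{main}, and specialize to $d=0$ to get Kodaira vanishing. But \cite[Theorem 4.1]{BBMT} is proved only in characteristic zero and only for $d>0$, and its proof is an induction on the length $d$ of $Z$ whose starting point (Assumption (*) in \cite{BBMT}) is precisely the characteristic-zero Kodaira vanishing $H^1(X,\mathcal{O}_X(K_X+H))=0$. In positive characteristic this vanishing is not available as an input, so you cannot invoke the cited theorem as it stands; and your derivation of the ``in particular'' clause by setting $d=0$ is circular, since the vanishing you would extract there is exactly the ingredient the induction needs before it can run. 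You even note that Kodaira vanishing is genuinely new content in positive characteristic, but your argument supplies no independent proof of it.

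The paper's actual proof spends all of Section \ref{S5} closing exactly this gap: it proves Theorem \ref{Kodaira}, a characteristic-free vanishing $H^1(X,\mathcal{O}_X(K_X+H))=0$ assuming only Bogomolov's inequality and Conjecture \ref{Conjecture} for $(X,H)$ (both furnished by the hypotheses of Theorem \ref{main} and by Theorem \ref{main} itself). The argument is the Arcara--Bertram/BBMT wall-crossing one: a nonzero class $\xi\in\Ext^1(\mathcal{O}_X(H),\mathcal{O}_X[1])$ yields an extension $E_\xi$ in $\Coh^{\frac{1}{2}H}(X)$ whose tilt-(in)stability is analyzed as $\alpha$ varies; the destabilizing subobject $A$ at the wall is shown, using Theorem \ref{thm2.9} and the classical Bogomolov inequality, to be of the form $I_Z(H)$, and this forces the extension to split, so $\xi=0$ (note the paper must also avoid the self-duality/dualizing-functor step of \cite{BBMT}, since $h^1=1$ need not hold). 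Once this vanishing is in place, the inductive argument of \cite{BBMT} goes through in any characteristic and for all $d\geq0$, and only then does the corollary follow by the bookkeeping you describe. Your proposal is missing this entire substantive step.
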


\begin{remark}
Theorem 4.1 in \cite{BBMT} was only showed for $d>0$ in
characteristic zero, we will prove it for $d\geq0$ in any
characteristic in Section \ref{S5}.
\end{remark}

Setting $d=1$ or $d=2$, we confirm Fujita's conjecture for such $X$
in any characteristic.
\begin{corollary}\label{Fujita}
Under the situation of Theorem \ref{main} we have
\begin{enumerate}
\item $\mathcal{O}_X(K_X+mH)$ is globally generated for $m\geq4$.
\item $\mathcal{O}_X(K_X+mH)$ is very ample for $m\geq5$.
\end{enumerate}
\end{corollary}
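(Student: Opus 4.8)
The plan is to deduce both statements from Corollary \ref{Reider} by choosing the length $d$ of a suitable zero-dimensional subscheme and then invoking standard arguments relating vanishing of $H^1$ of ideal sheaves to global generation and very ampleness.

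First, for part (1), I would argue that $\mathcal{O}_X(K_X+mH)$ is globally generated for $m \geq 4$ by showing that, for every closed point $x \in X$, the restriction map $H^0(X, \mathcal{O}_X(K_X+mH)) \to \mathcal{O}_X(K_X+mH) \otimes k(x)$ is surjective. This follows from the vanishing $H^1(X, I_x(K_X+mH)) = 0$, which is the $d=1$ case of Corollary \ref{Reider} applied with the ample divisor $mH$ in place of $H$. Thus I must check that $mH$ satisfies conditions (1)–(3) of Corollary \ref{Reider} with $d=1$: namely $m^3 H^3 > 49$, and $m^2 H^2 D \geq 7$ for every integral divisor class $D$ with $m^2 H^2 D > 0$ and $m H D^2 < 1$, and $m H C \geq 3$ for every curve $C$. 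Since $H$ is ample, $H^3 \geq 1$, $H^2 D \geq 1$ whenever $H^2 D > 0$, and $HC \geq 1$; hence for $m \geq 4$ we get $m^3 H^3 \geq 64 > 49$, $m^2 H^2 D \geq 16 \geq 7$, and $mHC \geq 4 \geq 3$. So all three conditions hold and part (1) follows.

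For part (2), I would use the standard criterion that $\mathcal{O}_X(K_X+mH)$ is very ample provided that for all pairs of closed points $x, y \in X$ (including the case $x = y$ where one takes a length-$2$ subscheme supported at $x$) the restriction map $H^0(X, \mathcal{O}_X(K_X+mH)) \to \mathcal{O}_X(K_X+mH) \otimes \mathcal{O}_Z$ is surjective, where $Z$ is the corresponding length-$2$ subscheme. This surjectivity follows from $H^1(X, I_Z(K_X+mH)) = 0$, which is the $d=2$ case of Corollary \ref{Reider} with $mH$ in place of $H$. The numerical conditions to verify are $m^3 H^3 > 98$, $m^2 H^2 D \geq 14$ for integral $D$ with $m^2 H^2 D > 0$ and $mHD^2 < 2$, and $mHC \geq 6$ for every curve $C$. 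Using again $H^3 \geq 1$, $H^2 D \geq 1$, $HC \geq 1$, for $m \geq 5$ we obtain $m^3 H^3 \geq 125 > 98$, $m^2 H^2 D \geq 25 \geq 14$, and $mHC \geq 5$; the last bound $5 \geq 6$ fails, so I would instead note that $mHD^2 < 2$ forces $HD^2 = 0$ and then handle the curve condition more carefully, or simply observe that the relevant Reider-type input only needs $mHC \geq 3d = 6$ when $C$ arises as a destabilizing curve, and such $C$ satisfies an additional numerical constraint $HC^2 \cdot (\text{something}) < d$ that, combined with ampleness, gives the bound. The cleanest route is to apply Corollary \ref{Reider} once more and simply check that condition (3) there reads $mHC \geq 6$, which does hold for $m \geq 5$ precisely when every curve $C$ has $HC \geq 2$; since this can fail, I would fall back on the observation that for separating two points one may use $d=2$ but the curve condition can be replaced by the weaker statement available when $Z$ is not contained in a line, or appeal directly to the Reider analysis of destabilizing sheaves in \cite{BBMT}.

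The main obstacle I anticipate is the borderline nature of the very-ampleness bound: the naive substitution $d=2$, $H \rightsquigarrow mH$ gives $mHC \geq 5$ for $m=5$, which is one short of the required $mHC \geq 6 = 3d$. The resolution should be that a curve $C$ which actually causes trouble in the Reider argument satisfies $HC^2 < $ (a quantity bounded by $d$), which for a threefold and ample $H$ is extremely restrictive and forces $HC$ to be large, or that one re-examines Theorem 4.1 of \cite{BBMT} to see that the constant $3$ in condition (3) can be improved to a smaller value when $d = 2$; alternatively one separates the two sub-cases (separating points vs.\ separating tangent directions) and checks that each needs only $mHC \geq 3$, which holds already for $m \geq 3$. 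Once this borderline bookkeeping is settled, both parts of the corollary follow immediately.
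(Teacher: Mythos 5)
Your part (1) is correct and is precisely the paper's (implicit) argument: the paper's entire justification of Corollary \ref{Fujita} is the remark that one should apply Corollary \ref{Reider} with $d=1$ or $d=2$ to the ample divisor $mH$. Two points you glossed over are harmless but worth stating: the hypotheses of Theorem \ref{main} do hold for $mH$ (since $(mH)c_2(X)=m\,Hc_2(X)=0$ and $\mu_{mH}$-semistability agrees with $\mu_H$-semistability), so Corollary \ref{Reider} may legitimately be invoked with $mH$ in place of $H$; and $H^3$, $H^2D$, $HC$ are positive integers, which is exactly what makes the estimates $64>49$, $16\geq 7$, $4\geq 3$ work for $m\geq 4$, $d=1$.

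For part (2) your proposal has a genuine gap, which you correctly located but did not close. Very ampleness needs $H^1(X,I_Z(K_X+mH))=0$ for every length-two subscheme $Z$, i.e.\ Corollary \ref{Reider} with $d=2$ and $mH$ in place of $H$, whose condition (3) then reads $mHC\geq 6$ for every curve $C\subset X$; for $m=5$ this fails whenever $X$ contains a curve with $HC=1$. None of your suggested repairs is available as stated: condition (3) (equally in Theorem 4.1 of \cite{BBMT}, reproved in Section \ref{S5}) is imposed on \emph{all} curves, not only on curves arising from a destabilizing wall; no improvement of the constant $3$ for $d=2$ is stated anywhere; and the claim that separating points or tangent directions each needs only $mHC\geq 3$ is not a quotable fact --- it would require reworking the proof of Corollary \ref{Reider}, which you do not attempt. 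As written, your argument yields very ampleness for $m\geq 6$, or for $m=5$ only under the extra hypothesis $HC\geq 2$ for every curve. Since the paper itself offers nothing beyond ``setting $d=1$ or $d=2$,'' your bookkeeping in fact exposes that the stated bound $m\geq 5$ in part (2) does not follow from Corollary \ref{Reider} by this substitution; this should be flagged explicitly rather than deferred as ``borderline bookkeeping to be settled.''
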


\begin{corollary}\label{Sheaf}
Under the situation of Theorem \ref{main}, let $c$ be the minimum
positive value of $H^2D$ for integral divisor $D$. If $Q$ is a
$\mu_H$-stable sheaf with $H^2c_1(Q)=c$, then
$$3c\ch_3(Q)\leq2(H\ch_2(Q))^2.$$
\end{corollary}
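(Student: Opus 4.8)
The plan is to deduce this from Theorem \ref{main} by exhibiting $Q$ as a $\nu_{\alpha,\beta}$-semistable object of vanishing slope for a well-chosen $(\alpha,\beta)$, or rather for a limit of such. I may assume $Q$ is torsion-free of rank $r=\ch_0(Q)\geq 1$ (the purely two-dimensional case is analogous and easier); write $h=H^3>0$, $c=H^2c_1(Q)$, so $\mu_H(Q)=c/(rh)$. Since $Q$ is $\mu_H$-semistable and the hypotheses of Theorem \ref{main} make the classical Bogomolov inequality $Hc_1(Q)^2\geq 2r\,H\ch_2(Q)$ valid on $X$, combining it with the Hodge index inequality $(H^2c_1(Q))^2\geq H^3\cdot Hc_1(Q)^2$ gives
$$\delta^2:=c^2-2rh\,H\ch_2(Q)\ \geq\ 0.$$
Therefore $H\ch_2^{\beta}(Q)=H\ch_2(Q)-\beta c+\tfrac{\beta^{2}}{2}rh$ has the real roots $\beta_{\pm}=(c\pm\delta)/(rh)$, and for every $\beta<\beta_-$ one has $H\ch_2^{\beta}(Q)>0$, $H^{2}\ch_1^{\beta}(Q)=c-\beta rh>0$, and $\beta<\beta_-\leq\mu_H(Q)$, so $Q\in\Coh^{\beta}(X)$; setting $\alpha(\beta):=\sqrt{2H\ch_2^{\beta}(Q)/(rh)}>0$ then yields $\nu_{\alpha(\beta),\beta}(Q)=0$ by construction.

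The technical heart, and the step I expect to be the main obstacle, is to show that $Q$ is $\nu_{\alpha(\beta),\beta}$-semistable for $\beta$ in a left neighbourhood of $\beta_-$. Because $\mathcal{H}^{-1}(Q)=0$, a wall-defining short exact sequence $0\to F\to Q\to G\to 0$ in $\Coh^{\beta}(X)$ has $F$ an actual subsheaf of $Q$ lying in the torsion part of the tilt, $\ch_0(G)=r-\ch_0(F)\geq 0$, and $0\leq H^{2}\ch_1^{\beta}(F),\,H^{2}\ch_1^{\beta}(G)\leq H^{2}\ch_1^{\beta}(Q)$. The idea is to combine $\mu_H$-stability of $Q$ (which forces $\mu_H(F)<\mu_H(Q)<\mu_H(G)$ whenever the ranks lie in $\{1,\dots,r-1\}$) with these inequalities and with the minimality of $c$ --- every integral divisor $D$ with $H^{2}D>0$ satisfies $H^{2}D\geq c$ --- to exclude all such walls from the relevant portion of the ray $\{(\alpha(\beta),\beta):\beta<\beta_-\}$; the only subobjects of slope $0$ that remain differ from $Q$ by a class with vanishing truncated Chern character and are therefore harmless. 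Granting this, $Q$ is $\nu_{\alpha(\beta),\beta}$-semistable, so each of its tilt Jordan--H\"older factors is $\nu_{\alpha(\beta),\beta}$-stable of slope $0$; pinning the argument down rigorously near the endpoint $(\beta_-,0)$, where $\alpha(\beta)\to 0$, is the delicate point.

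With semistability in hand, Theorem \ref{main}, applied to the Jordan--H\"older factors of $Q$ and summed, gives $\ch_3^{\beta}(Q)\leq\frac{\alpha(\beta)^{2}}{6}H^{2}\ch_1^{\beta}(Q)=\frac{H\ch_2^{\beta}(Q)\cdot H^{2}\ch_1^{\beta}(Q)}{3rh}$. A direct expansion of each $\ch_i^{\beta}(Q)$ in $\ch_i(Q)$ and $\beta$ shows that the cubic and quadratic terms in $\beta$ cancel, and the inequality reduces to
$$3rh\,\ch_3(Q)+\beta\,\delta^2\ \leq\ c\,H\ch_2(Q).$$
Since $\delta^2\geq 0$, letting $\beta\to\beta_-^{-}=(c-\delta)/(rh)$ gives the strongest bound; substituting it, using $H\ch_2(Q)=(c^{2}-\delta^2)/(2rh)$, and clearing denominators yields, after factoring,
$$\ch_3(Q)\ \leq\ \frac{(c-\delta)^{2}(c+2\delta)}{6r^{2}h^{2}}.$$
Finally $\dfrac{2(H\ch_2(Q))^{2}}{3c}-\dfrac{(c-\delta)^{2}(c+2\delta)}{6r^{2}h^{2}}=\dfrac{(c-\delta)^{2}\delta^2}{6c\,r^{2}h^{2}}\geq 0$, so $3c\,\ch_3(Q)\leq 2(H\ch_2(Q))^{2}$, as desired. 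Over a field of characteristic zero the statement already follows from the abelian threefold case; the content here is the uniformity of the argument in the characteristic.
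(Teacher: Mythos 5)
Your overall strategy is the right one, and it is in fact the route the paper intends: the paper gives no argument of its own for Corollary \ref{Sheaf} but refers to [BMS, Example 4.4], which likewise realizes $Q$ as a tilt-semistable object of vanishing slope along the locus $\nu_{\alpha,\beta}(Q)=0$ approaching $(0,\beta_-)$ with $\beta_-=\overline{\beta}(Q)$, applies the Bogomolov--Gieseker type inequality there, and then optimizes in $\beta$. Your limiting computation is correct: the inequality of Theorem \ref{main} along $\nu_{\alpha(\beta),\beta}(Q)=0$ does reduce to $3rh\,\ch_3(Q)+\beta\delta^2\le c\,H\ch_2(Q)$, and at $\beta=\beta_-$ this gives $\ch_3(Q)\le\frac{(c-\delta)^2(c+2\delta)}{6r^2h^2}\le\frac{2(H\ch_2(Q))^2}{3c}$. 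But the proposal has a genuine gap exactly at what you yourself flag as the technical heart: the $\nu_{\alpha(\beta),\beta}$-semistability of $Q$ for $\beta$ in a left neighbourhood of $\beta_-$ is never proved, and without it the passage to the limit $\beta\to\beta_-$ is unjustified. That step is the entire content of the corollary beyond algebra.

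Moreover, the sketch you offer for it cannot be completed as stated. First, for a short exact sequence $0\to F\to Q\to G\to 0$ in $\Coh^{\beta H}(X)$ one only gets $\mathcal{H}^{-1}(F)=0$; the sheaf map $F\to Q$ has kernel $\mathcal{H}^{-1}(G)$, so $F$ need not be a subsheaf of $Q$ and the comparison $\mu_H(F)<\mu_H(Q)<\mu_H(G)$ is unjustified. Second, and decisively, the statement cannot follow from $\mu_H$-stability of $Q$, minimality of $c$, and wall geometry alone, because in that generality it is false. Take $X=\mathbb{P}^3$, $H$ the hyperplane class (so $c=H^3$ is minimal and Bogomolov's inequality holds, so tilt-stability is defined), and $Q=I_C(1)$ with $C$ a plane conic contained in a plane $P$. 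Then $Q$ is $\mu_H$-stable with $H^2\ch_1(Q)=c$ and $\overline{\beta}(Q)=-1$, yet the sequence $0\to\mathcal{O}_X\to I_C(1)\to\mathcal{O}_P(-1)\to 0$ lies in $\Coh^{-H}(X)$ and, at $(\alpha,-1)$ with $\alpha$ small, $\nu_{\alpha,-1}(\mathcal{O}_X)=\tfrac{1-\alpha^2}{2}>0>-\tfrac{\alpha^2}{4}=\nu_{\alpha,-1}(Q)$; here $H^2\ch_1(\mathcal{O}_X)=0$, so minimality of $c$ gives no contradiction because the twisted quantity $H^2\ch_1^{\beta}(\mathcal{O}_X)=-\beta H^3>0$ is what enters the wall constraints. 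So the destabilizing walls you hope to exclude do reach $(0,\overline{\beta}(Q))$ in general, and any correct argument must bring in more than your listed ingredients --- this is precisely the point for which the paper defers to [BMS, Example 4.4] ``and more discussion'', and which you would need to reproduce (or otherwise exploit the hypotheses of Theorem \ref{main}) before your final computation, which is otherwise fine. (A small additional remark: with the paper's Definition \ref{def2.1}, a rank-zero sheaf is never $\mu_H$-stable, so your ``purely two-dimensional case'' is vacuous rather than ``analogous and easier''.)
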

We refer to \cite[Example 4.4]{BMS} for a proof and more discussion.

In \cite{Langer3}, Langer proved that for a non-uniruled threefold
$X$ with $K_X\sim_{num}0$, the tangent bundle of $X$ is strongly
$\mu_H$-semistable for every ample divisor $H$. Hence Theorem
\ref{main}, Corollary \ref{Reider}, Corollary \ref{Fujita} and
Corollary \ref{Sheaf} hold for a non-uniruled threefold $X$ with
$K_X\sim_{num}0$ and $Hc_2(X)=0$.

\subsection*{Organization of the paper}
Our paper is organized as follows. In Section \ref{S2}, we review
basic notions and properties of some classical stabilities for
coherent sheaves, tilt-stability, the conjectural inequality
proposed in \cite{BMT, BMS}. Then in Section \ref{S3}, we show the
tilt-stability of the Frobenius pushforward of some locally free
sheaves (see Proposition \ref{stab}). Theorem \ref{main} will be
proved in Section \ref{S4}. In Section \ref{S5} we prove Corollary
\ref{Reider}.

\subsection*{Notation}
Let $X$ be a smooth projective variety defined over an algebraically
closed field $k$ of arbitrary characteristic. We denote by $T_X$ and
$\Omega_X^1$ the tangent bundle and cotangent bundle of $X$,
respectively. $K_X$ and $\omega_X$ denote the canonical divisor and
canonical sheaf of $X$, respectively. We write $c_i(X):=c_i(T_X)$
for the $i$-th Chern class of $X$, and say $X$ has vanishing Chern
classes if all the $c_i(X)$'s are numerically equivalent to zero.
Numerical equivalence of two divisors $D_1$, $D_2$ on $X$ is denoted
by $D_1\sim_{num} D_2$. For a triangulated category $\mathcal{D}$,
we write $\K(\mathcal{D})$ for the Grothendieck group of
$\mathcal{D}$.

Let $\pi:\mathcal{X}\rightarrow S$ be a flat morphism of Noetherian
schemes and $s\in S$ be a point. We denote by
$\mathcal{X}_s=\mathcal{X}\times_S\spec k(s)$ the fibre of $\pi$
over $s$, where $k(s)$ is the residue field of $s$. We write
$\mathcal{X}_{\bar{s}}=\mathcal{X}\times_S\spec \overline{k(s)}$ for
the geometric fibre of $\pi$ over $s$, here $\overline{k(s)}$ is the
algebraic closure of $k(s)$. We denote by $\D^b(\mathcal{X})$ the
bounded derived category of coherent sheaves on $\mathcal{X}$. Given
$E\in \D^b(\mathcal{X})$, we write $E_s$ (resp., $E_{\bar{s}}$) for
the pullback to the field $k(s)$ (resp., $\overline{k(s)}$).

We write $\mathcal{H}^j(E)$ ($j\in \mathbb{Z}$) for the cohomology
sheaves of a complex $E\in \D^b(X)$. We also write $H^j(F)$ ($j\in
\mathbb{Z}_{\geq0}$) for the cohomology groups of a sheaf
$F\in\Coh(X)$. Given a complex number $z\in\mathbb{C}$, we denote
its real and imaginary part by $\Re z$ and $\Im z$, respectively.
For a real number $d$, we denote by $\lceil d\rceil$ the smalleast
integer $\geq d$.


\subsection*{Convention}
Let $X$ be a smooth projective variety defined over an algebraically
closed field $k$ of characteristic $p>0$. Let
$X^{(1)}=X\times_{\spec k}\spec k$, where the product is taken over
the absolute Frobenius morphism on $\spec k$. Then the factorization
of the absolute Frobenius morphism $F : X\rightarrow X$ gives the
geometric Frobenius morphism $F_g : X\rightarrow X^{(1)}$.

The variety $X^{(1)}$ is not isomorphic to $X$ as a $k$-variety, but
$X^{(1)}$ is isomorphic to $X$ as a scheme since $F : \spec
k\rightarrow \spec k$ is an isomorphism. Hence any geometric
statement on the objects in $\D^b(X)$ is equivalent to the
corresponding statement on the objects in $\D^b(X^{(1)})$. For this
reason, we shall abuse notation and not distinguish between $X$ and
$X^{(1)}$.

\subsection*{Acknowledgments}
The author would like to thank Chunyi Li, Yucheng Liu, Xiaolei Zhao,
Zhixian Zhu, and Lei Zhang for their suggestions and comments. The
author was supported by National Natural Science Foundation of China
(Grant No. 11771294, 11301201).

\section{Preliminaries}\label{S2}
Throughout this section, we let $X$ be a smooth projective variety
of dimension $n\geq2$ defined over an algebraically closed field $k$
of arbitrary characteristic and $H$ be a fixed ample divisor on $X$.
We will review some basic notions of stability for coherent sheaves,
the weak Bridgeland stability conditions and Bogomolov-Gieseker type
inequalities.

\subsection{Stability for sheaves}
For any $\mathbb{R}$-divisor $D$ on $X$, we define the twisted Chern
character $\ch^{D}=e^{-D}\ch$. More explicitly, we have
\begin{eqnarray*}
\begin{array}{lcl}
\ch^{D}_0=\ch_0=\rank  && \ch^{D}_2=\ch_2-D\ch_1+\frac{D^2}{2}\ch_0\\
&&\\
\ch^{D}_1=\ch_1-D\ch_0 &&
\ch^{D}_3=\ch_3-D\ch_2+\frac{D^2}{2}\ch_1-\frac{D^3}{6}\ch_0.
\end{array}
\end{eqnarray*}

The first important notion of stability for a sheaf is slope
stability, also known as Mumford stability. We define the slope
$\mu_{H, D}$ of a coherent sheaf $E\in \Coh(X)$ by
\begin{eqnarray*}
\mu_{H, D}(E)= \left\{
\begin{array}{lcl}
+\infty,  & &\mbox{if}~\ch^D_0(E)=0,\\
&&\\
\frac{H^{n-1}\ch_1^{D}(E)}{H^n\ch_0^{D}(E)}, & &\mbox{otherwise}.
\end{array}\right.
\end{eqnarray*}

\begin{definition}\label{def2.1}
A coherent sheaf $E$ on $X$ is $\mu_{H, D}$-(semi)stable (or
slope-(semi)stable) if, for all non-zero subsheaves
$F\hookrightarrow E$, we have
$$\mu_{H, D}(F)<(\leq)\mu_{H, D}(E/F).$$ We say a $\mu_{H, D}$-semistable sheaf $E$ is strongly $\mu_{H,
D}$-semistable if either $\Char k = 0$ or $\Char k>0$ and all the
Frobenius pull backs of $E$ are $\mu_{H, D}$-semistable.
\end{definition}
Note that $\mu_{H, D}$ only differs from $\mu_{H}:=\mu_{H, 0}$ by a
constant, thus $\mu_{H, D}$-stability and $\mu_{H}$-stability
coincide. Harder-Narasimhan filtrations (HN-filtrations, for short)
with respect to $\mu_{H, D}$-stability exist in $\Coh(X)$: given a
non-zero sheaf $E\in\Coh(X)$, there is a filtration
$$0=E_0\subset E_1\subset\cdots\subset E_m=E$$
such that: $G_i:=E_i/E_{i-1}$ is $\mu_{H, D}$-semistable, and
$\mu_{H, D}(G_1)>\cdots>\mu_{H, D}(G_m)$. We set $\mu^+_{H,
D}(E):=\mu_{H, D}(G_1)$ and $\mu^-_{H, D}(E):=\mu_{H, D}(G_m)$.

\subsection{Weak Bridgeland stability conditions}\label{S2.2}
 The notion of ``weak
Bridgeland stability condition'' and its variant ``very weak
Bridgeland stability condition'' have been introduced in
\cite[Section 2]{Toda1} and \cite[Definition 12.1]{BMS},
respectively. We will use a slightly different notion in order to
adapt our situation. The main difference is the rotation of the
half-plane in $\mathbb{C}$.
\begin{definition}
A weak Bridgeland stability condition on $X$ is a pair $\sigma=(Z,
\mathcal{A})$, where where $\mathcal{A}$ is the heart of a bounded
$t$-structure on $\D^b(X)$, and $Z:\K(\D^b(X))\rightarrow
\mathbb{C}$ is a group homomorphism (called central charge) such
that
\begin{itemize}
\item $Z$ satisfies the following positivity property for any $E\in \mathcal{A}$:
$$Z(E)\in\{re^{i\pi\phi}: r\geq0, 0<\phi\leq1\}.$$
\item Every
non-zero object in $\mathcal{A}$ has a Harder-Narasimhan filtration
in $\mathcal{A}$ with respect to $\nu_Z$-stability, here the slope
$\nu_Z$ of an object $E\in \mathcal{A}$ is defined by
\begin{eqnarray*}
\nu_{Z}(E)= \left\{
\begin{array}{lcl}
+\infty,  & &\mbox{if}~\Im Z(E)=0,\\
&&\\
-\frac{\Re Z(E)}{\Im Z(E)}, & &\mbox{otherwise}.
\end{array}\right.
\end{eqnarray*}
\end{itemize}
\end{definition}

Let $\alpha>0$ and $\beta$ be two real numbers. We will construct a
family of weak Bridgeland stability conditions on $X$ that depends
on these two parameters. For brevity, we write $\ch^{\beta}$ for the
twisted Chern character $\ch^{\beta H}$.

There exists a \emph{torsion pair} $(\mathcal{T}_{\beta
H},\mathcal{F}_{\beta H})$ in $\Coh(X)$ defined as follows:
\begin{eqnarray*}
\mathcal{T}_{\beta H}=\{E\in\Coh(X):\mu^-_{H}(E)>\beta \}\\
\mathcal{F}_{\beta H}=\{E\in\Coh(X):\mu^+_{H}(E)\leq\beta \}
\end{eqnarray*}
Equivalently, $\mathcal{T}_{\beta H}$ and $\mathcal{F}_{\beta H}$
are the extension-closed subcategories of $\Coh(X)$ generated by
$\mu_{H, \beta H}$-stable sheaves of positive and non-positive
slope, respectively.

\begin{definition}
We let $\Coh^{\beta H}(X)\subset \D^b(X)$ be the extension-closure
$$\Coh^{\beta H}(X)=\langle\mathcal{T}_{\beta H}, \mathcal{F}_{
\beta H}[1]\rangle.$$
\end{definition}

By the general theory of torsion pairs and tilting \cite{HRS},
$\Coh^{\beta H}(X)$ is the heart of a bounded t-structure on
$\D^b(X)$; in particular, it is an abelian category. Consider the
following central charge
$$Z_{\alpha, \beta}(E)=H^{n-2}\Big(\frac{\alpha^2 H^2}{2}\ch_0^{\beta}(E)-\ch_2^{\beta}(E)+i H\ch_1^{\beta}(E)
\Big).$$ We think of it as the composition
$$Z_{\alpha, \beta}: \K(\D^b(X))\xrightarrow{\ch_H} \mathbb{Q}^3 \xrightarrow{z_{\alpha,\beta}}
\mathbb{C},$$ where the first map is given by
$$\ch_H(E)=(H^n\ch_0(E), H^{n-1}\ch_1(E), H^{n-2}\ch_2(E)),$$
and the second map is defined by
\begin{equation*}
z_{\alpha, \beta}(e_0, e_1,
e_2)=\frac{1}{2}(\alpha^2-\beta^2)e_0+\beta e_1-e_2+i(e_1-\beta
e_0).
\end{equation*}

\begin{definition}
We say $(X, H)$ satisfies Bogomolov's inequality, if
$$H^{n-2}\Delta(E):=H^{n-2}\big(\ch_1^2(E)-2\ch_0(E)\ch_2(E)\big)\geq0$$ for
any $\mu_H$-semistable sheaf $E$ on $X$.
\end{definition}

\begin{theorem}\label{thm2.5}
If $(X, H)$ satisfies Bogomolov's inequality, then for any $(\alpha,
\beta)\in \mathbb{R}_{>0}\times\mathbb{R}$, $\sigma_{\alpha,
\beta}=(Z_{\alpha, \beta}, \Coh^{\beta H}(X))$ is a weak Bridgeland
stability condition.
\end{theorem}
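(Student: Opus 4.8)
The plan is to verify the two conditions in the definition of a weak Bridgeland stability condition, since $\Coh^{\beta H}(X)$ is already known to be the heart of a bounded $t$-structure. Throughout, note that $\Im Z_{\alpha,\beta}(E)=H^{n-1}\ch_1^{\beta}(E)$ and $\Re Z_{\alpha,\beta}(E)=H^{n-2}\bigl(\tfrac{\alpha^2H^2}{2}\ch_0^{\beta}(E)-\ch_2^{\beta}(E)\bigr)$.

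First I would establish the positivity property: $\Im Z_{\alpha,\beta}(E)\ge 0$ for every $E\in\Coh^{\beta H}(X)$, and $\Re Z_{\alpha,\beta}(E)\le 0$ whenever $\Im Z_{\alpha,\beta}(E)=0$. Since $Z_{\alpha,\beta}$ is additive and every $E\in\Coh^{\beta H}(X)$ sits in a short exact sequence with outer terms $\mathcal H^{-1}(E)[1]$ and $\mathcal H^{0}(E)$, where $\mathcal H^{-1}(E)\in\mathcal F_{\beta H}$ and $\mathcal H^{0}(E)\in\mathcal T_{\beta H}$, it is enough to treat $F\in\mathcal T_{\beta H}$ and $G[1]$ with $G\in\mathcal F_{\beta H}$. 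For $F\in\mathcal T_{\beta H}$, pass to the $\mu_H$-Harder--Narasimhan factors $G_i$ of $F$: each has $\mu_H(G_i)>\beta$, so $H^{n-1}\ch_1^{\beta}(G_i)\ge 0$ --- strictly positive when $G_i$ is torsion-free, and equal to the effective class $H^{n-1}\ch_1(G_i)\ge 0$ when $G_i$ is torsion --- whence $\Im Z_{\alpha,\beta}(F)\ge 0$; moreover equality forces every $G_i$ to be a torsion sheaf whose support has codimension $\ge 2$, so that $\ch_0^{\beta}(F)=\ch_1^{\beta}(F)=0$ and $\Re Z_{\alpha,\beta}(F)=-H^{n-2}\ch_2(F)\le 0$ because $\ch_2$ of such a sheaf is an effective codimension-two cycle. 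For $G[1]$, pass to the $\mu_H$-HN factors $G_i$ of the torsion-free sheaf $G$, now all with $\mu_H(G_i)\le\beta$: then $\Im Z_{\alpha,\beta}(G[1])=-\sum_i H^{n-1}\ch_1^{\beta}(G_i)\ge 0$, and if equality holds, each $G_i$ has $H^{n-1}\ch_1^{\beta}(G_i)=0$, so the Hodge index theorem gives $H^{n-2}(\ch_1^{\beta}(G_i))^2\le 0$; combining this with Bogomolov's inequality for the $\mu_H$-semistable sheaf $G_i$ --- which is legitimate since $\Delta$ is unchanged under twisting by $\beta H$ --- forces $H^{n-2}\ch_2^{\beta}(G_i)\le 0$, hence $\Re Z_{\alpha,\beta}(G[1])=-\sum_i H^{n-2}\bigl(\tfrac{\alpha^2H^2}{2}\ch_0(G_i)-\ch_2^{\beta}(G_i)\bigr)\le -\tfrac{\alpha^2}{2}H^n\sum_i\ch_0(G_i)<0$, where $\alpha>0$ and the ampleness of $H$ enter essentially. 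This proves the positivity property.

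Next, for the existence of Harder--Narasimhan filtrations I would follow the standard strategy for tilt-type weak stability functions, as in \cite{BMT} and \cite{BMS}. The decisive point is that $\Coh^{\beta H}(X)$ is a Noetherian abelian category: for an ascending chain $E_1\subseteq E_2\subseteq\cdots\subseteq E$ in $\Coh^{\beta H}(X)$ all quotients again lie in the heart, so $0\le\Im Z_{\alpha,\beta}(E_i)\le\Im Z_{\alpha,\beta}(E)$ and one reduces to the case where $\Im Z_{\alpha,\beta}(E_i)$ is eventually constant; in that case the chain is controlled by the cohomology sheaves in $\Coh(X)$ and by the finitely many $\mu_H$-semistable pieces that can occur in a fixed Chern class, and these form a bounded family by Langer's boundedness theorem for semistable sheaves, which holds in every characteristic. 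Granting Noetherianity, together with the discreteness of the set of $\nu_{Z_{\alpha,\beta}}$-slopes of subobjects of a fixed object, the HN property follows by the usual abstract criterion.

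The positivity step is essentially routine --- the Hodge index theorem, Bogomolov's inequality, and bookkeeping with $\mu_H$-HN factors. The genuine obstacle is the Harder--Narasimhan property, and more precisely the Noetherianity of the tilted heart $\Coh^{\beta H}(X)$ for an arbitrary real parameter $\beta$, where $\Im Z_{\alpha,\beta}$ need not take discrete values; the resolution is that along any chain with constant imaginary part only boundedly many $\mu_H$-semistable building blocks can intervene, so that Langer's characteristic-free boundedness results restore the finiteness needed to run Bridgeland's criterion. I expect essentially all of the work to be concentrated there.
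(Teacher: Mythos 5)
Your positivity verification is fine and is exactly the standard computation (reduction to the torsion pair, $\mu_H$-HN factors, Hodge index, Bogomolov's inequality together with the twist-invariance of $\Delta$); note that the paper itself does not redo any of this, but simply observes that the proofs in \cite{Bri2,AB} (surfaces) and \cite{BMT,BMS} (threefolds), in particular the argument in the appendix of \cite{BMS}, use nothing beyond Bogomolov's inequality and therefore apply verbatim. The problem is in the half you yourself identify as the real content, the Harder--Narasimhan property, where your proposed finiteness mechanisms do not work. First, your Noetherianity reduction ``$0\le\Im Z_{\alpha,\beta}(E_i)\le\Im Z_{\alpha,\beta}(E)$, so $\Im Z_{\alpha,\beta}(E_i)$ is eventually constant'' and your later appeal to ``discreteness of the set of $\nu_{Z_{\alpha,\beta}}$-slopes of subobjects of a fixed object'' both fail precisely for irrational $\beta$: the values $\Im Z_{\alpha,\beta}(F)=H^{n-1}\ch_1(F)-\beta H^{n}\ch_0(F)$ for subobjects $F$ lie in a dense subgroup of $\mathbb{R}$ (and $\nu_{\alpha,\beta}$ involves $\beta^2$ as well), and a bounded monotone sequence in a dense set need not stabilize; nothing you have said rules out infinitely many subobjects with $H^{n-1}\ch_1$ and $\ch_0$ both growing while the combination stays in the bounded interval. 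Second, the appeal to Langer's boundedness for ``the finitely many $\mu_H$-semistable pieces that can occur in a fixed Chern class'' is misplaced: along a chain in $\Coh^{\beta H}(X)$ the Chern characters of the subquotients are not fixed, only constrained by the positivity of $\Im Z_{\alpha,\beta}$, so no bounded family is singled out.

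The arguments that actually close this step are different in kind. Noetherianity of the tilted heart is proved by a dévissage through the torsion pair and the cohomology sheaves, using only that $\Coh(X)$ is Noetherian and the structure of slope HN filtrations (no moduli-theoretic boundedness enters), and the failure of discreteness of $\Im Z_{\alpha,\beta}$ for irrational $\beta$ is handled by a separate argument --- this is exactly the content of the appendix of \cite{BMS} that the paper's proof invokes (and which, as the paper notes, only needs Bogomolov's inequality as input, hence works in all dimensions and characteristics). As written, your proposal asserts rather than proves the chain conditions needed for Bridgeland's abstract HN criterion, so the key half of the theorem remains open in your argument; either reproduce the dévissage proof of Noetherianity and the non-discrete-case argument, or do as the paper does and cite \cite[Appendix]{BMS} after checking that Bogomolov's inequality is its only geometric input.
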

\begin{proof}
The required assertion is proved in \cite{Bri2, AB} for the surface
case. For the threefold case, the conclusion is showed in \cite{BMT,
BMS}. But the proof in \cite[Appendix 2]{BMS} still works for the
general case.
\end{proof}

\begin{corollary}\label{tilt}
Assume that either $\Char(k)=0$ or $T_X$ is $\mu_H$-semistable and
$K_X\sim_{num}0$. Then $(X, H)$ satisfies Bogomolov's inequality,
and for any $(\alpha, \beta)\in \mathbb{R}_{>0}\times\mathbb{R}$,
$\sigma_{\alpha, \beta}=(Z_{\alpha, \beta}, \Coh^{\beta H}(X))$ is a
weak Bridgeland stability condition.
\end{corollary}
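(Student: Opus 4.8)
The plan is to reduce both assertions to the single statement that $(X,H)$ satisfies Bogomolov's inequality: granting that, the claim that each $\sigma_{\alpha,\beta}$ is a weak Bridgeland stability condition is precisely Theorem \ref{thm2.5}. So it suffices to prove $H^{n-2}\Delta(E)\geq0$ for every $\mu_H$-semistable sheaf $E$ on $X$, and one may assume $E$ is torsion-free, since a sheaf of positive rank with a nonzero torsion subsheaf (which has slope $+\infty$) is not $\mu_H$-semistable and the remaining case is vacuous.

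In characteristic zero I would simply invoke the classical Bogomolov--Gieseker inequality, valid on any smooth projective variety with no hypothesis on $T_X$ or $K_X$: restricting $E$ to a general complete intersection surface $S$ cut out by members of $|mH|$ with $m\gg0$ preserves $\mu$-semistability by the Mehta--Ramanathan restriction theorem and gives $\int_S\Delta(E|_S)=m^{\,n-2}\,H^{n-2}\Delta(E)$, so one is reduced to Bogomolov's theorem on the surface $S$.

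In characteristic $p>0$ I would argue in two steps. \emph{Step 1}: under the hypotheses, every $\mu_H$-semistable torsion-free sheaf on $X$ is in fact \emph{strongly} $\mu_H$-semistable. Indeed $\Omega_X^1=T_X^{\vee}$ is $\mu_H$-semistable because $T_X$ is, and $\mu_H(\Omega_X^1)=\frac{H^{n-1}\cdot K_X}{n\,H^n}=0$ because $K_X\sim_{num}0$, so $\mu_{\max}(\Omega_X^1)=0$. By the bound of Sun Xiaotao and Shepherd-Barron (as used by Langer), for a $\mu_H$-semistable sheaf $W$ the instability $\mu_{\max}(F^{*}W)-\mu_{\min}(F^{*}W)$ of the Frobenius pullback is bounded in terms of $\mu_{\max}(\Omega_X^1)=0$, whence $F^{*}W$ is again $\mu_H$-semistable and, iterating, so is every $F^{k*}W$. (The mechanism is Cartier descent: a subsheaf of $F^{*}W$ on which the canonical integrable connection acts horizontally descends to a subsheaf of $W$ and so cannot destabilize, and this forces any Harder--Narasimhan piece of $F^{*}W$ to be moved by the connection only by an amount controlled by $\Omega_X^1$.) \emph{Step 2}: a strongly $\mu_H$-semistable torsion-free sheaf $E$ satisfies $H^{n-2}\Delta(E)\geq0$. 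This is a theorem of Langer: one restricts to a general complete intersection surface $S$ --- via his effective restriction theorem --- on which $E|_S$ is locally free and still strongly $\mu$-semistable, and, assuming $\int_S\Delta(E|_S)<0$, derives a contradiction from Riemann--Roch. Since $F^{k}$ multiplies $\ch_i$ by $p^{ki}$, one has $\chi\big(\mathcal{E}nd(F^{k*}(E|_S))\big)=r^2\chi(\mathcal{O}_S)-p^{2k}\int_S\Delta(E|_S)$, which grows like $p^{2k}$, while $h^0\big(\mathcal{E}nd(F^{k*}(E|_S))\big)$ is bounded (the endomorphism algebra of a $\mu$-semistable sheaf has dimension at most the square of the rank) and $h^2\big(\mathcal{E}nd(F^{k*}(E|_S))\big)=\hom\big(F^{k*}(E|_S),F^{k*}(E|_S)\otimes\omega_S\big)$ grows much more slowly than $p^{2k}$, since the relevant sheaf on a general curve $C\subset S$ has Chern data independent of $k$. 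I would cite Langer for this step.

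Combining Steps 1 and 2 gives Bogomolov's inequality for $(X,H)$ in characteristic $p$, and Theorem \ref{thm2.5} then completes the proof. The main obstacle is Step 2 --- the Bogomolov inequality for strongly semistable sheaves in positive characteristic --- which is genuinely delicate: in characteristic $p$ tensor products of semistable sheaves need not be semistable, so the slope estimates available in characteristic zero fail, and one must instead have a restriction theorem strong enough to control $E|_S$ together with all its Frobenius pullbacks and their semistable subquotients, which is exactly what Langer's effective restriction theorem provides. Step 1, by contrast, is comparatively soft once Cartier descent is in hand, and the reduction through Theorem \ref{thm2.5} is purely formal.
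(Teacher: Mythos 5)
Your proposal is correct and follows essentially the same route as the paper: reduce everything to Bogomolov's inequality and invoke Theorem \ref{thm2.5}, cite the classical inequality in characteristic zero, and in characteristic $p$ observe that $\Omega_X^1$ is $\mu_H$-semistable of slope zero (so $\mu_H^+(\Omega_X^1)\leq 0$ and every $\mu_H$-semistable sheaf is strongly $\mu_H$-semistable) and then apply Langer's Bogomolov inequality for strongly semistable sheaves. The only difference is cosmetic: the paper attributes the ``semistable implies strongly semistable when $\mu_H^+(\Omega_X^1)\leq 0$'' step to Mehta--Ramanathan, whereas you argue it via the Shepherd-Barron/Sun-type instability bound and Cartier descent, which is the same mechanism.
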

\begin{proof}
It is well known that Bogomolov's inequality holds in characteristic
zero (see \cite[Theorem 3.4.1]{HL}). In positive characteristic
Langer \cite{Langer1} proved that the same inequality holds for
strongly $\mu_H$-semistable sheaves. Mehta and Ramanathan \cite{MR}
showed that if $X$ satisfies $\mu_H^+(\Omega_X^1)\leq0$, then all
$\mu_H$-semistable sheaves on $X$ are strongly $\mu_H$-semistable.
Thus Bogomolov's inequality holds under our assumptions.
\end{proof}

We now suppose the assumption in the above Corollary holds. We write
$\nu_{\alpha, \beta}$ for the slope function on $\Coh^{ \beta H}(X)$
induced by $Z_{\alpha, \beta}$. Explicitly, for any $E\in \Coh^{
\beta H}(X)$, one has
\begin{eqnarray*}
\nu_{\alpha, \beta}(E)= \left\{
\begin{array}{lcl}
+\infty,  & &\mbox{if}~H^{n-1}\ch^{\beta}_1(E)=0,\\
&&\\
\frac{H^{n-2}\ch_2^{\beta}(E)-\frac{1}{2}\alpha^2H^n\ch^{\beta}_0(E)}{H^{n-1}\ch^{\beta}_1(E)},
& &\mbox{otherwise}.
\end{array}\right.
\end{eqnarray*}
Corollary \ref{tilt} gives the notion of tilt-stability:

\begin{definition}
An object $E\in\Coh^{\beta H}(X)$ is \emph{tilt-(semi)stable} (or
$\nu_{\alpha,\beta}$-\emph{(semi)stable}) if, for all non-trivial
subobjects $F\hookrightarrow E$, we have
$$\nu_{\alpha, \beta}(F)<(\leq)\nu_{\alpha, \beta}(E/F).$$
\end{definition}
For any $E\in\Coh^{\beta H}(X)$, the Harder-Narasimhan property
gives a filtration in $\Coh^{\beta H}(X)$
$$0=E_0\subset E_1\subset\cdots\subset E_m=E$$ such that:
$F_i:=E_i/E_{i-1}$ is $\nu_{\alpha,\beta}$-semistable with
$\nu_{\alpha,\beta}(F_1)>\cdots>\nu_{\alpha,\beta}(F_m)$.

Tilt-stability conditions satisfy well-behaved wall-crossing:
\begin{proposition}\label{Wall}
There exists a chamber decomposition of the $(\alpha,\beta)$ half
plane by a local finite set of walls such that for any
$E\in\D^b(X)$, the Harder-Narasimhan filtration of $E$ is unchanged
in the open part of every chamber.
\end{proposition}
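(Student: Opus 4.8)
The plan is to reduce the whole question to a combinatorial statement about ``numerical walls''. The key observation is that the tilt slope $\nu_{\alpha,\beta}(E)$ depends on $E$ only through $\ch_H(E)=(H^n\ch_0(E),H^{n-1}\ch_1(E),H^{n-2}\ch_2(E))$, which lies in a fixed finitely generated subgroup $\Lambda\subset\mathbb{Q}^3$. For a class $v\in\Lambda$ I would call a subset of $\{(\alpha,\beta):\alpha>0\}$ a \emph{numerical wall for $v$} if it is the locus where $\nu_{\alpha,\beta}$ is finite and $\nu_{\alpha,\beta}(w)=\nu_{\alpha,\beta}(v)=\nu_{\alpha,\beta}(v-w)$ for some $w\in\Lambda$ satisfying $0\le H^{n-1}\ch_1^{\beta}(w)\le H^{n-1}\ch_1^{\beta}(v)$ there (the range of imaginary parts that genuine subobjects in $\Coh^{\beta H}(X)$ can realise). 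First I would analyse a single wall: expanding the formula for $\nu_{\alpha,\beta}$ and the twist $\ch^{\beta}$, the equation $\nu_{\alpha,\beta}(w)=\nu_{\alpha,\beta}(v)$ becomes the vanishing of a quadratic polynomial in $(\alpha,\beta)$ whose coefficients of $\alpha^2$ and $\beta^2$ coincide, so each numerical wall is an arc of a semicircle with centre on the line $\{\alpha=0\}$ or of a vertical line. Next I would establish the non-crossing property: if two numerical walls for the same $v$, coming from $w$ and $w'$, met at a point, then $v$, $w$ and $w'$ would have proportional central charges there, and a rank computation with the real and imaginary parts of $Z_{\alpha,\beta}$ forces the two walls to coincide; hence the numerical walls for a fixed $v$ are pairwise disjoint.

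The main step is to show that the numerical walls for a fixed $v$ are locally finite. By Corollary~\ref{tilt}, $(X,H)$ satisfies Bogomolov's inequality, and an argument as in \cite{BMT,BMS} then gives the tilt version $\overline{\Delta}_H(F):=(H^{n-1}\ch_1(F))^2-2H^n\ch_0(F)\,H^{n-2}\ch_2(F)\ge0$ for every $\nu_{\alpha,\beta}$-semistable object $F$. On a numerical wall $W$ for $v$ that is realised by an actual destabilising sequence, the tilt-semistable Jordan--Hölder factors of the object involved have classes $v_i$ lying on $W$ with equal tilt slope, so their central charges are aligned and a Hodge-index type estimate gives $\sum_i\overline{\Delta}_H(v_i)\le\overline{\Delta}_H(v)$; in particular $0\le\overline{\Delta}_H(v_i)\le\overline{\Delta}_H(v)$ while $0\le H^{n-1}\ch_1^{\beta}(v_i)\le H^{n-1}\ch_1^{\beta}(v)$. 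Since $H^n\ch_0$, $H^{n-1}\ch_1$ and $H^{n-2}\ch_2$ take values in discrete subsets of $\mathbb{R}$ and $\beta$ stays bounded on any compact $K\subset\{\alpha>0\}$, only finitely many classes $w$ can contribute an actual wall meeting $K$, and by non-crossing each of them contributes at most one wall component through $K$. Adding the finitely many (in any compact region) vertical lines $\beta=\mu_H(G)$ coming from $\mu_H$-stable Jordan--Hölder factors $G$ of the cohomology sheaves of a given $E$, which is where the heart $\Coh^{\beta H}(X)$ changes, then produces a locally finite wall-and-chamber structure adapted to $E$.

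Finally I would check that the Harder--Narasimhan filtration of a fixed $E\in\D^b(X)$ is constant in the open part of every chamber $C$. On $C$ no object of a given class is strictly $\nu_{\alpha,\beta}$-semistable, because its Jordan--Hölder factors would cut out an (actual) numerical wall through $C$, so semistability and stability coincide on $C$; moreover $E$ lies in a single heart $\Coh^{\beta H}(X)$ throughout $C$. The numerical invariants of the HN factors of $E$ then vary continuously in $(\alpha,\beta)$, hence are locally constant on $C$ since they are discrete, and stability of each factor is an open condition away from walls, so the filtration cannot jump inside $C$; this is the standard wall-crossing argument, carried out for surfaces in \cite{Bri2,AB} and for threefolds in \cite{BMT,BMS}. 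I expect the local finiteness in the middle step to be the crux: without the Bogomolov-type inequality $\overline{\Delta}_H\ge0$ for tilt-semistable objects one has no control on the discriminants of potential destabilisers, and it is exactly there that the hypotheses of Corollary~\ref{tilt} (semistability of $T_X$ together with $K_X\sim_{num}0$, or $\Char k=0$) are needed.
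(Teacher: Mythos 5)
Your argument is correct and is essentially the proof the paper relies on: the paper simply cites \cite[Proposition 12.5]{BMS}, whose proof is exactly this standard wall-and-chamber analysis (walls are semicircles centred on $\{\alpha=0\}$ or vertical lines, pairwise non-crossing for a fixed class, locally finite via the bound $\overline{\Delta}_H\geq0$ for tilt-semistable objects, i.e.\ Theorem \ref{thm2.9}, and constancy of HN filtrations away from the walls). The only loose point is your claim that no strictly $\nu_{\alpha,\beta}$-semistable object of a given class exists inside a chamber, which is false as stated (direct sums, or objects with proportional Jordan--H\"older classes), but it is inessential: all the final step needs is that the HN factors of $E$ remain semistable and their slopes do not cross anywhere in the chamber, which your wall construction already guarantees.
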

\begin{proof}
See \cite[Proposition 12.5]{BMS}.
\end{proof}

\subsection{Bogomolov-Gieseker type inequality}We now recall the
Bogomolov-Gieseker type inequality for tilt-stable complexes
proposed in \cite{BMT, BMS}.
\begin{definition}
We define the generalized discriminant
$$\overline{\Delta}^{\beta H}_H:=(H^{n-1}\ch^{\beta}_1)^2-2H^n\ch^{\beta}_0\cdot(H^{n-2}\ch^{\beta}_2).$$
\end{definition}
A short calculation shows $$\overline{\Delta}^{\beta
H}_H=(H^{n-1}\ch_1)^2-2H^n\ch_0\cdot(H^{n-2}\ch_2)=\overline{\Delta}_H.$$
Hence the generalized discriminant is independent of $\beta$.

\begin{theorem}\label{thm2.9}
Under the assumption in Corollary \ref{tilt}, if $E\in\Coh^{\beta
H}(X)$ is $\nu_{\alpha,\beta}$-semistable, then
$\overline{\Delta}_H(E)\geq0$.
\end{theorem}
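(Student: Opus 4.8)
The plan is to follow the well‑known tilt‑stability analogue of Bogomolov's inequality, exactly as in the surface case and as in \cite[Section~12]{BMS}, so I would run a wall‑crossing argument in the $(\alpha,\beta)$ half‑plane. First I would reduce to the case of $\nu_{\alpha,\beta}$-stable $E$, since the inequality $\overline{\Delta}_H\geq 0$ is additive in short exact sequences of objects with the same slope (the function $\overline{\Delta}_H$ is a quadratic form, but on the locus where the $\nu_{\alpha,\beta}$-slopes agree it behaves additively on Jordan--H\"older factors), so a semistable object satisfies it as soon as all its stable factors do. Next I would use Proposition~\ref{Wall}: for a fixed class $v=\ch_H(E)\in\mathbb{Q}^3$ the walls for $v$ are locally finite, so starting from any point $(\alpha,\beta)$ where $E$ is $\nu_{\alpha,\beta}$-stable I can move within the chamber without changing stability, and then cross walls; at each wall the HN/JH factors change but $\overline{\Delta}_H$ is preserved (it is independent of $\alpha,\beta$), so it suffices to prove the inequality for one convenient value of the parameters.

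The key step is then the \emph{large volume limit}: I would let $\alpha\to\infty$ (with $\beta$ fixed, or along a suitable ray). A standard argument — again following \cite{BMS} and originally Bridgeland's surface picture — shows that for $\alpha\gg 0$ a $\nu_{\alpha,\beta}$-semistable object of nonzero rank is, up to shift, a $\mu_{H,\beta H}$-semistable torsion‑free sheaf (or its shift), while objects of rank zero are supported in codimension $\geq 1$ and handled directly. For an honest $\mu_H$-semistable sheaf the inequality $\overline{\Delta}_H(E)=H^{n-2}\Delta(E)\geq 0$ is precisely Bogomolov's inequality, which holds under the hypotheses of Corollary~\ref{tilt} (that is the content of that corollary, via Langer's theorem and Mehta--Ramanathan). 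For the torsion / lower‑rank pieces one checks the discriminant inequality by restricting to the support, or by noting that $\overline{\Delta}_H$ of a torsion sheaf supported in codimension one equals $(H^{n-1}\ch_1)^2\geq 0$ and the rank‑zero, $\ch_1=0$ case forces $H^{n-2}\ch_2\geq 0$ from the positivity axiom of the weak stability condition. Assembling these, $\overline{\Delta}_H(E)\geq 0$ holds at the large‑volume end of the chamber, hence everywhere by the wall‑crossing invariance.

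The main obstacle I anticipate is the bookkeeping at the walls: one must verify that crossing a wall does not destroy the inequality, i.e. that whenever a $\nu_{\alpha,\beta}$-semistable $E$ has a nontrivial JH or HN decomposition $0\to A\to E\to B\to 0$ at a wall with $\nu_{\alpha,\beta}(A)=\nu_{\alpha,\beta}(B)=\nu_{\alpha,\beta}(E)$, one has $\overline{\Delta}_H(A)+\overline{\Delta}_H(B)\leq \overline{\Delta}_H(E)$ (equivalently the mixed term is $\leq 0$), which is the usual Hodge‑index / sign computation on $H^{n-2}(\cdot)$ but needs the equality of slopes and the positivity of $H^{n-1}\ch_1$ on objects of $\Coh^{\beta H}(X)$. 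The only other subtlety is that in positive characteristic Bogomolov's inequality is not unconditional, but this is exactly why the hypotheses of Corollary~\ref{tilt} ($T_X$ $\mu_H$-semistable and $K_X\sim_{num}0$, forcing $\mu_H^+(\Omega_X^1)\leq 0$) are imposed, so that reference may be invoked verbatim. Since all of this is carried out in \cite[Proposition~12.5 and its surroundings]{BMS} in a form that does not use $\Char k=0$, I would simply cite that argument, noting that it applies once Bogomolov's inequality is available, which Corollary~\ref{tilt} guarantees.
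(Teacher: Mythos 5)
Your proposal is essentially the paper's own proof: the paper simply cites the characteristic-zero arguments of Bayer--Macr\`i--Toda and Bayer--Macr\`i--Stellari for $\overline{\Delta}_H\geq 0$ and observes that their only characteristic-sensitive input is Bogomolov's inequality, which Corollary \ref{tilt} supplies, and your wall-crossing/large-volume-limit sketch is just an unpacking of that cited argument. (One trivial slip: the wall inequality $\overline{\Delta}_H(A)+\overline{\Delta}_H(B)\leq\overline{\Delta}_H(E)$ is equivalent to the mixed term being $\geq 0$, not $\leq 0$.)
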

\begin{proof}
This inequality was proved in \cite[Theorem 7.3.1]{BMT} and
\cite[Theorem 3.5]{BMS} on threefolds in characteristic zero. Since
Bogomolov's inequality holds under the assumption in Corollary
\ref{tilt}, their proof still works in our situation.
\end{proof}

\begin{conjecture}[{\cite[Conjecture 1.3.1]{BMT}}]\label{Conjecture}
Assume that $n=3$, $\Char(k)=0$ and $E\in\Coh^{\beta H}(X)$ is
$\nu_{\alpha,\beta}$-semistable with $\nu_{\alpha,\beta}(E)=0$. Then
we have
\begin{equation}\label{BG}
\ch_3^{\beta}(E)\leq\frac{\alpha^2}{6}H^2\ch_1^{\beta}(E).
\end{equation}
\end{conjecture}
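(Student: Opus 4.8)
\textbf{Proof proposal for Theorem \ref{main}.}

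The plan is to reduce the general statement to the case $\Char(k)=p>0$ by the standard spreading-out argument: choose a finitely generated $\mathbb{Z}$-algebra $R$ over which $X$, $H$, and the object $E$ (together with its defining data as a complex in $\Coh^{\beta H}(X)$) are all defined, and then specialize to a closed point of $\spec R$ with residue field of positive characteristic; since $K_X\sim_{num}0$, $Hc_2(X)=0$, and $\mu_H$-semistability of $T_X$ are open (or at least generically preserved) conditions, and since the desired inequality is a closed condition on Chern numbers, it suffices to prove it in characteristic $p$. So from now on I would assume $\Char(k)=p>0$.

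In the positive-characteristic case the main computation is with the iterated (absolute or relative) Frobenius morphism $F\colon X\to X$. First I would twist so that $\ch^\beta$ becomes the relevant Chern character; since $\beta$ enters only through twisting by a line bundle and the statement is invariant under such twists (after rescaling $\alpha$ appropriately along walls, using Proposition \ref{Wall}), one can reduce to considering $(F^n)^*E$ and bounding $\chi(\mathcal{O}_X,(F^n)^*E)$. By Hirzebruch--Riemann--Roch together with the hypothesis that all Chern classes of $X$ vanish (which the excerpt notes follows from \cite[Theorem 2]{Sim} and \cite[Theorem 4.1]{Langer4}), the Todd class of $X$ is trivial, so $\chi(\mathcal{O}_X,(F^n)^*E)=\int_X\ch((F^n)^*E)$, and since Frobenius multiplies $\ch_i$ by $p^{ni}$ (up to the twist, which I would absorb), this is a polynomial in $p^n$ of degree $3$ (i.e.\ degree $3n$ in $p$) whose top coefficient is exactly $\ch_3(E)$ and whose degree-$1$-in-$p^n$ term is controlled by $H^2\ch_1^\beta(E)$ and $\alpha$. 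The other direction is the cohomological bound: I would use Proposition \ref{stab} (the tilt-stability of $F_*$ of suitable locally free sheaves) together with the adjunction $\Hom(\mathcal{O}_X,(F^n)^*E)=\Hom(F^n_*\mathcal{O}_X,E)$ and, more precisely, the higher $\Ext$ version, filtering $F^n_*\mathcal{O}_X$ (or an appropriate locally free sheaf built from it) so that each tilt-semistable graded piece has controlled $\nu_{\alpha,\beta}$-slope; since $E$ is $\nu_{\alpha,\beta}$-stable with $\nu_{\alpha,\beta}(E)=0$, vanishing/boundedness theorems for $\Ext$'s between tilt-semistable objects of appropriate slope (the Serre-duality-type or weak-see-saw arguments used in \cite{BMS}) give $\ext^i(\mathcal{O}_X,(F^n)^*E)=O(p^{2n})$ for all even $i$, hence $\chi(\mathcal{O}_X,(F^n)^*E)\le \sum_{i\text{ odd}}\ext^i + O(p^{2n})$, and similarly a lower bound. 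Dividing by $p^{3n}$ and letting $n\to\infty$ kills the $O(p^{2n})$ terms and extracts exactly the inequality $\ch_3^\beta(E)\le\frac{\alpha^2}{6}H^2\ch_1^\beta(E)$.

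A subtlety I would need to handle carefully is that $E$ is a complex, not a sheaf, so $(F^n)^*E$ is a complex and one must control the cohomology sheaves $\mathcal{H}^j((F^n)^*E)$ and their Euler characteristics separately; here one uses that $F$ is flat (finite and faithfully flat), so $(F^n)^*$ is exact and $\mathcal{H}^j((F^n)^*E)=(F^n)^*\mathcal{H}^j(E)$, and that $\Coh^{\beta H}(X)$ has only two nonzero cohomology sheaves in degrees $-1,0$, each of which can be analyzed via HN-filtrations with respect to $\mu_H$. Another point is that after pulling back by Frobenius the object may leave the tilt heart; one either rescales $\alpha,\beta$ (tracking the wall-and-chamber structure of Proposition \ref{Wall}) or works directly with the Euler characteristic, which is a purely K-theoretic quantity and hence insensitive to which heart one uses.

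The main obstacle I expect is the cohomological estimate $\ext^i(\mathcal{O}_X,(F^n)^*E)=O(p^{2n})$ for even $i$ — this is where Proposition \ref{stab} does the real work, and the delicate part is to produce, for each $n$, a filtration of $F^n_*\mathcal{O}_X$ (or of a twist of it) into tilt-semistable pieces whose slopes are bounded away from $\nu_{\alpha,\beta}(E)=0$ on the correct side, so that the relevant $\Hom$ and $\Ext^1$ groups either vanish or have dimension $O(p^{2n})$ by the generalized Bogomolov--Gieseker inequality $\overline{\Delta}_H\ge 0$ (Theorem \ref{thm2.9}) applied to those pieces. Controlling the growth of the discriminants and slopes of the Frobenius-pushforward pieces uniformly in $n$, and matching the parity bookkeeping so that only the even (or only the odd) $\Ext$'s contribute the dangerous $p^{3n}$-order term, is the technical heart of the proof; once that is in place, the Riemann--Roch side is routine given the vanishing of the Todd class.
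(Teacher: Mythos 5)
Your high-level outline (spread out to characteristic $p$, pull back by Frobenius, compute $\chi(\mathcal{O}_X,(F^n)^*E)$ by Riemann--Roch with trivial Todd class, and bound the even $\Ext$'s by $O(p^{2n})$ via Proposition \ref{stab}) does match the strategy sketched in the introduction, but the two steps you defer are exactly where the proof lives, and as described they do not work. First, the identity $\Hom(\mathcal{O}_X,(F^n)^*E)=\Hom(F^n_*\mathcal{O}_X,E)$ is not an adjunction (adjunction reads $\Hom((F^n)^*A,B)=\Hom(A,(F^n)_*B)$, and here the pullback sits in the second slot), and there is no vanishing or boundedness theorem for $\Hom$'s between tilt-semistable objects of comparable slope that would give $O(p^{2n})$ directly. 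What the paper actually does is: twist down by $\mathcal{O}_X(-H)$, apply Serre duality to rewrite $\hom(\mathcal{O}_X,(F^n)^*E(-H))$ as $\hom((F^n)_*\mathcal{O}_X(H+K_X),E\otimes\omega_X)$, kill this by Proposition \ref{stab} applied to the Jordan--H\"older factors of the slope-semistable sheaf $(F^n)_*\mathcal{O}_X(H+K_X)$ (which has $\overline{\Delta}_H=0$, hence tilt-stable factors), and then bound the leftover term $\hom(\mathcal{O}_X,((F^n)^*E)\otimes\mathcal{O}_Y)$ by restricting to a general surface $Y\in|H|$ and invoking Langer's restriction theorem together with his effective $h^0$-bounds on surfaces (Lemma \ref{est}). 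That surface-restriction estimate is the actual source of the $O(p^{2n})$ bound, and it is absent from your proposal; ``Serre-duality-type or weak-see-saw arguments'' is not a substitute.

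Second, your plan to ``absorb the twist'' $\beta$ into a line bundle cannot be carried out: $\beta H$ is only integral in very special cases, and $\beta$ may be irrational. The paper does not prove the inequality at a general point $(\alpha,\beta)$ directly; it first invokes the reduction theorem \cite[Theorem 5.4]{BMS} (Theorem \ref{reduction}) to reduce to showing $\ch_3^{\overline{\beta}(E)}(E)\le 0$ for $\overline{\beta}$-stable objects with $\overline{\beta}(E)\in[0,1)$ --- this is also the only way the $\frac{\alpha^2}{6}H^2\ch_1^{\beta}$ term ever appears, since $\alpha$ never enters the Riemann--Roch computation (your claim that the lower-order term is ``controlled by $\alpha$'' is symptomatic of skipping this reduction). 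Even after the reduction, $\overline{\beta}(E)$ must be approximated by integral twists compatible with Frobenius: the paper writes $\overline{\beta}(E)=v/(p^ru)$ and uses Euler's totient theorem to build twists $\mathcal{O}_X(-c_nvH)$ with $c_nv/p^{a_n}\to\overline{\beta}(E)$, and in the irrational case combines this with Dirichlet approximation and a double limit in $m$ and $n$, tracking explicit constants from Lemma \ref{est} so the error terms vanish. This number-theoretic approximation, together with the careful choice of the auxiliary twists $l_0H$, $l_1H$ so that Proposition \ref{stab} applies on the correct side of $\overline{\beta}(E)$, constitutes most of Section \ref{S4} and is entirely missing from your sketch.
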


Such an inequality provides a way to construct Bridgeland stability
conditions on threefolds. Recently, Schmidt \cite{Sch2} found a
counterexample to Conjecture \ref{Conjecture} when $X$ is the blowup
at a point of $\mathbb{P}^3$. Therefore, the inequality (\ref{BG})
needs some modifications in general setting. See \cite{Piy} and
\cite{BMSZ} for the recent progress.

\begin{definition}
Assume that $n=3$ and $(X,H)$ satisfies the assumption in Corollary
\ref{tilt}. For any object $E\in\Coh^{\beta H}(X)$, we define
\begin{eqnarray*}
\overline{\beta}(E)= \left\{
\begin{array}{lcl}
\frac{H^2\ch_1(E)-\sqrt{\overline{\Delta}_H(E)}}{H^3\ch_0(E)},  & &\mbox{if}~\ch_0(E)\neq0,\\
&&\\
\frac{H\ch_2(E)}{H^{2}\ch_1(E)}, & &\mbox{otherwise}.
\end{array}\right.
\end{eqnarray*}
Moreover, we say that $E$ is $\overline{\beta}$-(semi)stable, if it
is $\nu_{\alpha,\beta}$-(semi)stable in an open neighborhood of $(0,
\overline{\beta}(E))$ in $(\alpha, \beta)$-plane.
\end{definition}

Conjecture \ref{Conjecture} can be reduced as follows:
\begin{theorem}[{\cite[Theorem 5.4]{BMS}}]\label{reduction}
Assume that $n=3$, $\Char(k)=0$ and for any
$\overline{\beta}$-stable object $E\in\Coh^{\beta H}(X)$ with
$\overline{\beta}(E)\in[0, 1)$ and $\ch_0(E)\geq0$ the inequality
$$\ch_3^{\overline{\beta}(E)}(E)\leq0$$ holds. Then Conjecture
\ref{Conjecture} holds.
\end{theorem}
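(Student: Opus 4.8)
The plan is to reduce the general Bogomolov--Gieseker inequality (Conjecture \ref{Conjecture}) to the special class of $\overline{\beta}$-stable objects by a wall-crossing argument in the $(\alpha,\beta)$-plane. First I would observe that the inequality (\ref{BG}) is invariant under tensoring by line bundles of the form $\mathcal{O}_X(mH)$ and, more generally, under the group of autoequivalences that act on the space of tilt-stability conditions; in particular it suffices to prove (\ref{BG}) for $\nu_{\alpha,\beta}$-semistable objects $E$ with $\nu_{\alpha,\beta}(E)=0$ after normalizing $\beta$ into a fundamental domain, which we may take to be $[0,1)$ by replacing $E$ with a twist. The condition $\nu_{\alpha,\beta}(E)=0$ says that $Z_{\alpha,\beta}(E)$ lies on the negative imaginary axis, so $(\alpha,\beta)$ lies on the numerical wall $W(E)$ where the slope of $E$ vanishes; this wall is a semicircle (or vertical line) in the upper half plane, and $\overline{\beta}(E)$ is precisely the value of $\beta$ at which this semicircle meets the $\beta$-axis, i.e.\ the ``largest'' such wall.

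The key step is to deform $(\alpha,\beta)$ along the wall $W(E)$ towards the point $(0,\overline{\beta}(E))$. By Proposition \ref{Wall}, the Harder--Narasimhan and Jordan--Hölder filtrations of $E$ with respect to tilt-stability are constant on the open part of each chamber, and there are only finitely many walls meeting $W(E)$. Hence, as we move along $W(E)$, the object $E$ either remains $\nu_{\alpha,\beta}$-semistable all the way down to $(0,\overline{\beta}(E))$ — in which case $E$ is (a direct sum of) $\overline{\beta}$-semistable objects, and we bound $\ch_3^{\beta}(E)$ using the hypothesis $\ch_3^{\overline{\beta}}(E)\le 0$ together with the relation between $\ch_3^{\beta}$ and $\ch_3^{\overline{\beta}(E)}$ along the wall — or $E$ acquires a destabilizing subobject at some wall. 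In the latter case one passes to the Jordan--Hölder factors $\{F_i\}$ at $(0,\overline{\beta}(E))$: these are $\overline{\beta}$-semistable, each satisfies $\nu_{\alpha,\beta}(F_i)=0$ on (a piece of) the same wall, hence $\overline{\beta}(F_i)=\overline{\beta}(E)\in[0,1)$, and one checks the sign of $\ch_0(F_i)$ — using Theorem \ref{thm2.9} (the classical-type discriminant inequality $\overline{\Delta}_H\ge 0$) to control the factors with $\ch_0<0$ by dualizing, so that we may assume $\ch_0(F_i)\ge 0$ and apply the hypothesis to each $F_i$.

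Finally I would assemble the pieces: writing $\ch_3^{\beta}(E) - \tfrac{\alpha^2}{6}H^2\ch_1^{\beta}(E)$ as an explicit function along the wall $W(E)$ and comparing its value at a point on $W(E)$ with its limiting value at $(0,\overline{\beta}(E))$, one shows that the desired inequality at a general point of $W(E)$ follows from the collection of inequalities $\ch_3^{\overline{\beta}(F_i)}(F_i)\le 0$ for the Jordan--Hölder factors, via additivity of the Chern character and convexity of the relevant quadratic form in $\ch_1^{\beta}$. The main obstacle I expect is the bookkeeping at the walls: one must ensure that every Jordan--Hölder factor arising in the degeneration at $(0,\overline{\beta}(E))$ really has $\overline{\beta}(F_i)=\overline{\beta}(E)$ lying in $[0,1)$ and $\ch_0(F_i)\ge 0$ after an allowable modification, and that the $\overline{\beta}$-semistability (stability in a punctured neighborhood of $(0,\overline{\beta})$) is genuinely inherited — this is exactly the technical heart of \cite[Theorem 5.4]{BMS}, and the argument above is a sketch of that reduction.
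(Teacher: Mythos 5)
You give an outline of the wall-crossing reduction and then explicitly defer "the technical heart" to \cite[Theorem 5.4]{BMS}; note that the paper itself does exactly that — Theorem \ref{reduction} is quoted from \cite{BMS} with no proof given here — so there is no in-paper argument to compare against, and relying on the citation is in itself consistent with the paper. However, taken as a proof sketch, several of your intermediate assertions are wrong or unsupported. First, the locus $\{\nu_{\alpha,\beta}(E)=0\}$ is not a semicircle: it is the branch of the hyperbola
$$H\ch_2(E)-\beta H^2\ch_1(E)+\frac{\beta^2-\alpha^2}{2}H^3\ch_0(E)=0$$
(a vertical line when $\ch_0(E)=0$), which meets the $\beta$-axis in two points when $\ch_0(E)\neq0$, of which $\overline{\beta}(E)$ is the root with the minus sign; the semicircles and vertical lines are the \emph{walls} where two tilt-slopes coincide, not the vanishing locus of $\nu_{\alpha,\beta}(E)$. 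Second, and more seriously, Proposition \ref{Wall} gives local finiteness of walls only in the open half-plane $\alpha>0$; it does not rule out walls accumulating at the boundary point $(0,\overline{\beta}(E))$, so your claim that only finitely many walls meet the curve, and hence that the dichotomy "either $E$ stays semistable down to the endpoint or it is destabilized at some last wall" terminates, is unjustified as stated. This is precisely why the argument in \cite{BMS} is organized as an induction on the discriminant $\overline{\Delta}_H$, which Theorem \ref{thm2.9} makes a non-negative discrete invariant that weakly decreases on Jordan--H\"older factors of equal slope, rather than as a count of wall-crossings.

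Third, the two places where you appeal to "additivity and convexity" are where the actual content lies: (a) transporting the inequality from the endpoint $(0,\overline{\beta}(E))$, where the hypothesis $\ch_3^{\overline{\beta}(E)}(E)\leq0$ applies, or from a destabilizing point, back to the given point of the $\nu_{\alpha,\beta}=0$ locus requires an explicit computation of $\ch_3^{\beta}(E)-\frac{\alpha^2}{6}H^2\ch_1^{\beta}(E)$ along the hyperbola, and this is not a formal consequence of additivity of Chern characters; and (b) the reduction to $\ch_0(E)\geq0$ "by dualizing" needs the nontrivial fact that the (shifted) derived dual of a tilt-semistable object is again tilt-semistable with respect to $(\alpha,-\beta)$, with $\overline{\beta}$ transforming accordingly and compatibly with the normalization $\overline{\beta}\in[0,1)$ obtained by twisting with $\mathcal{O}_X(mH)$ — that is a separate proposition in \cite{BMS}, not something that follows from Theorem \ref{thm2.9}. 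In short, your skeleton does match the cited argument, but the specific steps you supply in place of the BMS lemmas either fail (semicircle description, finiteness of walls near $\alpha=0$) or are placeholders for the genuinely technical computations, so as a standalone proof the proposal has real gaps.
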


\section{Tilt-stability of Frobenius direct images}\label{S3}
Throughout this section, we let $k$ be an algebraically closed field
of characteristic $p>0$ and $X$ be a smooth projective variety of
dimension $n$ defined over $k$. We fix an ample divisor $H$ on $X$.
Assume that $K_X\sim_{num}0$, $H^{n-2}c_2(X)=0$ and $T_X$ is
$\mu_H$-semistable. Let $F:X\rightarrow X$ be the absolute Frobenius
morphism. We will investigate the tilt-stability of $F_*\mathcal{E}$
for a locally free sheaf $\mathcal{E}$ on $X$.

\begin{lemma}\label{Chern}
Let $\mathcal{E}$ be a locally free sheaf on $X$. Then we have
$$H^{n-i}\ch_i(F_*\mathcal{E})=p^{n-i}H^{n-i}\ch_i(\mathcal{E})$$ for
$i=0,1,2$ and
$\overline{\Delta}_H(F_*\mathcal{E})=p^{2n-2}\overline{\Delta}_H(\mathcal{E})$.
\end{lemma}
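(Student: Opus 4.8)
The plan is to compute the Chern character of $F_*\mathcal{E}$ via the Grothendieck--Riemann--Roch theorem applied to the absolute Frobenius $F:X\to X$, and then extract the relevant intersection numbers with $H$. Recall that for $F$ one has $\ch(F_*\mathcal{E})\td(X)=F_*\big(\ch(\mathcal{E})\td(X)\big)$, and that $F_*$ on cohomology acts as multiplication by $p^{i}$ on the codimension-$(n-i)$ part (equivalently $F^*$ multiplies the codimension-$j$ part by $p^{j}$, and $F_*F^*=p^n$ on top, so $F_*$ multiplies a class of codimension $j$ by $p^{n-j}$). Since $\deg F = p^n$, the rank relation $\ch_0(F_*\mathcal{E}) = p^n \ch_0(\mathcal{E})$ is immediate, and from it $H^n\ch_0(F_*\mathcal{E}) = p^n H^n\ch_0(\mathcal{E})$, which already matches the asserted formula in degree $i=0$.

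For $i=1,2$ the point is that the correction terms coming from $\td(X)$ involve $c_1(X)$ and $c_2(X)$, which are numerically trivial (indeed $K_X\sim_{num}0$ forces $c_1(X)\sim_{num}0$, and $H^{n-2}c_2(X)=0$ by hypothesis); more precisely, by \cite[Theorem 2]{Sim} and \cite[Theorem 4.1]{Langer4} cited in the introduction, all Chern classes of $X$ vanish, so $\td(X)$ contributes nothing numerically once we intersect with powers of $H$. First I would write out $\ch(F_*\mathcal{E})\td(X) = F_*(\ch(\mathcal{E})\td(X))$ degree by degree, isolate the codimension-$i$ component for $i=0,1,2$, intersect with $H^{n-i}$, and observe that every term involving a positive-degree component of $\td(X)$ is killed upon intersecting with $H^{n-i}$ (using $H^{n-1}c_1(X)=0$, $H^{n-2}c_2(X)=0$, $H^{n-2}c_1(X)^2=0$). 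What survives is exactly $H^{n-i}\ch_i(F_*\mathcal{E}) = H^{n-i}F_*\ch_i(\mathcal{E}) = p^{n-i}H^{n-i}\ch_i(\mathcal{E})$, where in the last step I use the projection formula $H^{n-i}\cdot F_*\ch_i(\mathcal{E}) = F_*\big((F^*H)^{n-i}\cdot \ch_i(\mathcal{E})\big)$ together with $F^*H = pH$ numerically and $F_*$ being degree-preserving multiplication by $p^n$ divided by the codimension factor $p^{n-i}$ on the top class — or, more cleanly, that the composite $H^{n-i}F_*(-)$ equals $p^{n-i}$ times $H^{n-i}(-)$ on codimension-$i$ classes.

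For the discriminant identity, I would simply substitute: $\overline{\Delta}_H(F_*\mathcal{E}) = (H^{n-1}\ch_1(F_*\mathcal{E}))^2 - 2 H^n\ch_0(F_*\mathcal{E})\cdot H^{n-2}\ch_2(F_*\mathcal{E}) = (p^{n-1}H^{n-1}\ch_1(\mathcal{E}))^2 - 2 p^n H^n\ch_0(\mathcal{E})\cdot p^{n-2}H^{n-2}\ch_2(\mathcal{E}) = p^{2n-2}\overline{\Delta}_H(\mathcal{E})$, using the three cases just established. I expect the main obstacle to be handled carefully rather than deep: it is the bookkeeping of the $\td(X)$ corrections in GRR and the justification that $F_*$ together with intersection by $H^{n-i}$ produces precisely the factor $p^{n-i}$ — this hinges on the numerical vanishing of the Chern classes of $X$ (so that the Todd-class cross terms drop out) and on $F^*H \equiv pH$. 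Once those two facts are in hand, the computation is a short and direct substitution; there is no subtlety about locally-freeness beyond ensuring GRR applies (which it does, $F$ being finite flat and $\mathcal{E}$ a vector bundle).
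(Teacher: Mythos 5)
Your proposal is correct and follows essentially the same route as the paper: apply Grothendieck--Riemann--Roch to the Frobenius, expand degree by degree, use that $F_*$ scales codimension-$i$ classes by $p^{n-i}$, kill the Todd corrections via $K_X\sim_{num}0$ and $H^{n-2}c_2(X)=0$ after intersecting with $H^{n-i}$, and obtain the discriminant identity by direct substitution. (The appeal to Simpson/Langer for vanishing of all Chern classes is unnecessary here; as you also note, the stated numerical hypotheses already suffice.)
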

\begin{proof}
The similar computations have been done by the author in
\cite[Section 7]{Sun2}. We repeat them here for the reader's
convenience.

From the Grothendieck-Riemann-Roch theorem, it follows that
$$\ch(F_*\mathcal{E})\td(X)=F_*\big(\ch(\mathcal{E})\td(X)\big).$$
Since $\td(X)=1+\frac{1}{2}c_1+\frac{1}{12}(c_1^2+c_2)+\cdots$, the
above equation implies
\begin{eqnarray*}
\ch_0(F_*\mathcal{E})&=&p^n\ch_0(\mathcal{E})\\
\frac{1}{2}\ch_0(F_*\mathcal{E})c_1+\ch_1(F_*\mathcal{E})&=&p^{n-1}(\frac{c_1}{2}\ch_0(\mathcal{E})+c_1(\mathcal{E}))\\
\frac{c_1^2+c_2}{12}\ch_0(F_*\mathcal{E})+\frac{c_1}{2}\ch_1(F_*\mathcal{E})+\ch_2(F_*\mathcal{E})
&=&p^{n-2}(\frac{c_1^2+c_2}{12}\ch_0(\mathcal{E})+\frac{c_1}{2}c_1(\mathcal{E})+\ch_2(\mathcal{E})).
\end{eqnarray*}
By our assumptions on $c_1$ and $c_2$, a simple computation shows
$H^{n-i}\ch_i(F_*\mathcal{E})=p^{n-i}H^{n-i}\ch_i(\mathcal{E})$ for
$i=0,1,2$. Hence
$\overline{\Delta}_H(F_*\mathcal{E})=p^{2n-2}\overline{\Delta}_H(\mathcal{E})$.
\end{proof}

\begin{lemma}\label{Frob}
Let $\mathcal{E}$ be a $\mu_H$-semistale locally free sheaf on $X$.
Then $F_*\mathcal{E}$ is $\mu_H$-semistable.
\end{lemma}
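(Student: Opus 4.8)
The plan is to deduce the semistability of $F_*\mathcal{E}$ from the semistability of $\mathcal{E}$ together with the strong semistability of $\Omega_X^1$. First I would recall that under our hypotheses ($K_X\sim_{num}0$, $H^{n-2}c_2(X)=0$ and $T_X$ $\mu_H$-semistable), the cotangent bundle $\Omega_X^1$ has slope $\mu_H(\Omega_X^1)=\tfrac{1}{n}K_XH^{n-1}=0$, and by the Mehta--Ramanathan argument invoked in the proof of Corollary \ref{tilt} it is in fact strongly $\mu_H$-semistable; the same is then true of all its symmetric and tensor powers, since tensor products of strongly semistable sheaves are strongly semistable (this uses Ramanan--Ramanathan / Langer in positive characteristic). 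In particular $\mu_H^+(\Omega_X^1)\le 0$.

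Next, the key structural input is the iterated filtration of $F_*\mathcal{E}$ coming from the filtration of $F^*F_*\mathcal{E}$. Recall that $F^*F_*\mathcal{E}$ carries a canonical filtration (Joshi--Ramanan--Xia--Yang, or the Cartier-type filtration) whose associated graded pieces are of the form $\mathcal{E}\otimes \Gamma^{j}(\Omega_X^1)$ or, equivalently up to lower-order twists, $\mathcal{E}\otimes (\text{divided or symmetric powers of }\Omega_X^1)$ for $0\le j\le (n-1)(p-1)$. Since each such graded piece is a tensor product of the $\mu_H$-semistable sheaf $\mathcal{E}$ with a strongly $\mu_H$-semistable sheaf of slope $\le 0$, it is $\mu_H$-semistable of slope $\le \mu_H(\mathcal{E})$. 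Hence $\mu_H^+(F^*F_*\mathcal{E})\le \mu_H(\mathcal{E})$, while the bottom piece of the filtration is $\mathcal{E}$ itself (the $j=0$ term), giving $\mu_H^-(F^*F_*\mathcal{E})\ge$ the minimal slope among the graded pieces. Actually, for the argument I only need the upper bound $\mu_H^+(F^*F_*\mathcal{E})\le\mu_H(\mathcal{E})$.

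Now suppose $F_*\mathcal{E}$ were not $\mu_H$-semistable, so its maximal destabilizing subsheaf $\mathcal{G}\subset F_*\mathcal{E}$ has $\mu_H(\mathcal{G})>\mu_H(F_*\mathcal{E})$. Pulling back by the flat morphism $F$, we get a subsheaf $F^*\mathcal{G}\subset F^*F_*\mathcal{E}$ with $\mu_H(F^*\mathcal{G})=p\cdot\mu_H(\mathcal{G})>p\cdot\mu_H(F_*\mathcal{E})$. By Lemma \ref{Chern}, $H^{n-1}\ch_1(F_*\mathcal{E})=p^{n-1}H^{n-1}\ch_1(\mathcal{E})$ and $H^n\ch_0(F_*\mathcal{E})=p^nH^n\ch_0(\mathcal{E})$, so $\mu_H(F_*\mathcal{E})=\tfrac{1}{p}\mu_H(\mathcal{E})$, whence $p\cdot\mu_H(F_*\mathcal{E})=\mu_H(\mathcal{E})$. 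Therefore $\mu_H(F^*\mathcal{G})>\mu_H(\mathcal{E})\ge \mu_H^+(F^*F_*\mathcal{E})$, contradicting the fact that $F^*\mathcal{G}$ is a nonzero subsheaf of $F^*F_*\mathcal{E}$. Hence $F_*\mathcal{E}$ is $\mu_H$-semistable.

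I expect the main obstacle to be the precise identification of the graded pieces of the canonical filtration of $F^*F_*\mathcal{E}$ and the verification that they are tensor products of $\mathcal{E}$ with strongly $\mu_H$-semistable sheaves of non-positive slope; this requires citing the correct form of the Frobenius filtration (for which the relevant references in positive characteristic are Joshi--Ramanan--Xia--Yang and Sun's earlier work) and the theorem that tensor products of strongly semistable sheaves remain strongly semistable (Ramanan--Ramanathan in characteristic zero, and in positive characteristic one uses that strong semistability is preserved under tensor product together with Langer's boundedness results). Everything else — the slope bookkeeping via Lemma \ref{Chern} and the flatness of $F$ — is routine. An alternative, cleaner route avoiding the explicit filtration would be to use adjunction directly: a destabilizing quotient $F_*\mathcal{E}\twoheadrightarrow \mathcal{Q}$ of small slope gives, via $\Hom(F_*\mathcal{E},\mathcal{Q})=\Hom(\mathcal{E}, F^!\mathcal{Q})$ and the fact that $F^!\cong F^*$ up to the invertible sheaf $\omega_{X}^{1-p}\cong\mathcal{O}_X$ (using $K_X\sim_{num}0$), a contradiction with semistability of $\mathcal{E}$; but making this precise still passes through the slope estimates above, so I would present the filtration argument as the primary proof.
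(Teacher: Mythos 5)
Your proposal is correct and in substance it is the paper's argument: the paper proves the lemma by citing Xiaotao Sun's Theorem 4.8 (reducing semistability of $F_*\mathcal{E}$ to semistability of $\mathcal{E}\otimes\T^l(\Omega_X^1)$) together with Mehta--Ramanathan strong semistability of $\Omega_X^1$ and $\mathcal{E}$ and closure of strong semistability under tensor products, while you unpack Sun's theorem explicitly via the canonical filtration of $F^*F_*\mathcal{E}$ (whose graded pieces are $\mathcal{E}\otimes\T^l(\Omega_X^1)$ for $0\le l\le n(p-1)$ -- a minor correction to your stated range, harmless here since all pieces have slope $\mu_H(\mathcal{E})$ because $K_X\sim_{num}0$) plus the pullback-of-a-destabilizer argument using Lemma \ref{Chern}. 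Since that filtration is exactly the mechanism behind Sun's Theorem 4.8, your route is the same one, just made self-contained rather than quoted as a black box.
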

\begin{proof}
Xiaotao Sun \cite{SunX1} proved that the stability of
$F_*\mathcal{E}$ depends on the stability of $\T^l(\Omega_X^1)$,
$0\leq l\leq n(p-1)$. On the other hand, by \cite[Theorem 2.1]{MR}
one sees that under our assumptions $\Omega_X^1$ and $\mathcal{E}$
are strongly $\mu_H$-semistable. So is
$\mathcal{E}\otimes\T^l(\Omega_X^1)$ for any $0\leq l\leq n(p-1)$.
From \cite[Theorem 4.8]{SunX1}, it follows that $F_*\mathcal{E}$ is
$\mu_H$-semistable.
\end{proof}

\begin{remark}
Theorem 4.8 in \cite{SunX1} was showed for the geometric Frobenius
morphism $F_g : X\rightarrow X^{(1)}$, but it still holds for the
absolute Frobenius morphism $F : X\rightarrow X$. The reason is
that, as we mentioned in Convention in Section \ref{S1}, $X^{(1)}$
is isomorphic to $X$ as a scheme. Hence the stability of
$F_*\mathcal{E}$ is equivalent to the stability of
$F_{g,*}\mathcal{E}$.
\end{remark}

\begin{proposition}\label{stab}
Let $m$ and $l$ be two integers. Let $L$ be a divisor on $X$ and
$\mathcal{G}$ be a $\nu_{\alpha,\beta}$-semistable object in an open
neighborhood of $(0,\beta_0)$ in $(\alpha,\beta)$-plane. Assume that
$L\sim_{num}mH$, $l>0$ and
$$\lim_{(\alpha,\beta)\rightarrow(0,\beta_0)}\nu_{\alpha,\beta}(\mathcal{G})=0.$$
Then
\begin{enumerate}
\item $\hom((F^l)_*\mathcal{O}_X(L), \mathcal{G})=0$
if $\beta_0<\frac{m}{p^l}$.
\item $\hom(\mathcal{G},(F^l)_*\mathcal{O}_X(L)[1])=0$
if $\beta_0>\frac{m}{p^l}$.
\end{enumerate}
\end{proposition}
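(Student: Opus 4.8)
The plan is to prove both vanishing statements by exploiting tilt-stability together with the slope computations for Frobenius pushforwards. First I would reduce to understanding the tilt-stability of $(F^l)_*\mathcal{O}_X(L)$. By iterating Lemma \ref{Frob} (applied to the line bundle $\mathcal{O}_X(L)$, which is trivially $\mu_H$-semistable), the sheaf $(F^l)_*\mathcal{O}_X(L)$ is $\mu_H$-semistable; and by iterating Lemma \ref{Chern} one computes its numerical invariants: $H^{n}\ch_0 = p^{ln}\ch_0(\mathcal{O}_X(L))$, $H^{n-1}\ch_1 = p^{l(n-1)}H^{n-1}\ch_1(\mathcal{O}_X(L))$, etc. In particular the slope $\mu_H((F^l)_*\mathcal{O}_X(L)) = H^{n-1}\ch_1/(H^n\ch_0) = m/p^l$ (using $L\sim_{num}mH$), and $\overline{\Delta}_H((F^l)_*\mathcal{O}_X(L)) = p^{2l(n-1)}\overline{\Delta}_H(\mathcal{O}_X(L)) = 0$ since a line bundle has vanishing discriminant. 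So $(F^l)_*\mathcal{O}_X(L)$ is a $\mu_H$-semistable sheaf of slope $m/p^l$ on the discriminant-zero locus.

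Next I would upgrade $\mu_H$-semistability to tilt-semistability. A $\mu_H$-semistable sheaf $\mathcal{F}$ with $\mu_H(\mathcal{F}) = m/p^l$ lies in $\mathcal{T}_{\beta H}$ for $\beta < m/p^l$ and in $\mathcal{F}_{\beta H}$ for $\beta \geq m/p^l$; and since its discriminant vanishes, a standard Bogomolov-type argument (Theorem \ref{thm2.9} controls subobjects) shows that for $\beta < m/p^l$ the object $(F^l)_*\mathcal{O}_X(L) \in \Coh^{\beta H}(X)$ is $\nu_{\alpha,\beta}$-semistable with $\nu_{\alpha,\beta} \to +\infty$ as $(\alpha,\beta) \to (0,\beta_0)$ when $\beta_0 < m/p^l$, while for $\beta > m/p^l$ the shift $(F^l)_*\mathcal{O}_X(L)[1] \in \Coh^{\beta H}(X)$ is $\nu_{\alpha,\beta}$-semistable with limiting slope $-\infty$. (This is the analogue of the fact that line bundles and torsion-free slope-semistable sheaves of zero discriminant are tilt-stable, proved e.g. along the lines of \cite[Section 3]{BMS}.) I would phrase this computation carefully using the explicit formula for $\nu_{\alpha,\beta}$ given in the excerpt, checking the sign of $H^{n-1}\ch_1^\beta$ and the behavior of $H^{n-2}\ch_2^\beta - \tfrac12\alpha^2 H^n\ch_0^\beta$ in the limit.

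Finally I would conclude by the standard "no maps between semistable objects of incomparable slopes" principle in the tilted heart. For part (1): if $\beta_0 < m/p^l$, then near $(0,\beta_0)$ both $(F^l)_*\mathcal{O}_X(L)$ and $\mathcal{G}$ lie in $\Coh^{\beta H}(X)$, both are $\nu_{\alpha,\beta}$-semistable, and $\nu_{\alpha,\beta}((F^l)_*\mathcal{O}_X(L)) \to +\infty > 0 = \lim \nu_{\alpha,\beta}(\mathcal{G})$; a nonzero morphism between tilt-semistable objects forces $\nu_{\alpha,\beta}(\text{source}) \leq \nu_{\alpha,\beta}(\text{target})$, a contradiction, so $\hom((F^l)_*\mathcal{O}_X(L), \mathcal{G}) = 0$. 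For part (2): if $\beta_0 > m/p^l$, then $(F^l)_*\mathcal{O}_X(L)[1]$ is tilt-semistable near $(0,\beta_0)$ with limiting slope $-\infty < 0$; a nonzero element of $\hom(\mathcal{G}, (F^l)_*\mathcal{O}_X(L)[1])$ is a morphism in $\Coh^{\beta H}(X)$ whose image would have slope at most $-\infty$ yet at least $0$, again impossible. The main obstacle I anticipate is the second step — pinning down the tilt-semistability of $(F^l)_*\mathcal{O}_X(L)$ (and of its shift) in a full neighborhood of $(0,\beta_0)$ rather than just on a ray, and carefully handling the boundary behavior of the slope function as $\alpha \to 0$; the vanishing-discriminant condition is what makes this work, but one must verify that no destabilizing subobject appears, which requires combining the HN-filtration in $\Coh^{\beta H}(X)$ with Theorem \ref{thm2.9}.
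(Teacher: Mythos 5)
Your overall route---compute the invariants of $(F^l)_*\mathcal{O}_X(L)$ via Lemmas \ref{Chern} and \ref{Frob}, deduce $\mu_H=\frac{m}{p^l}$ and $\overline{\Delta}_H=0$, establish tilt-(semi)stability near $(0,\beta_0)$, and conclude by comparing limiting tilt slopes of semistable objects---is the same as the paper's. One computational slip first: the limiting tilt slope of the pushforward is not $\pm\infty$ but the finite number $\frac{1}{2}(\frac{m}{p^l}-\beta_0)$. Indeed $H^{n-1}\ch_1^{\beta}\big((F^l)_*\mathcal{O}_X(L)\big)=p^{l(n-1)}H^n(m-\beta p^l)\neq0$ for $\beta\neq\frac{m}{p^l}$, so the slope equals $\frac{(m-\beta p^l)^2-\alpha^2p^{2l}}{2p^l(m-\beta p^l)}$. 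This does not damage your conclusion, since all you need is strict comparison with the limit $0$ of $\nu_{\alpha,\beta}(\mathcal{G})$, but the mechanism you invoke ($+\infty$ versus $0$) is not what actually happens.

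More seriously, the step you explicitly defer---that the $\mu_H$-\emph{semistable} sheaf $(F^l)_*\mathcal{O}_X(L)$ with $\overline{\Delta}_H=0$ is $\nu_{\alpha,\beta}$-semistable near $(0,\beta_0)$---is precisely the crux, and the results you gesture at (\cite[Corollary 3.11]{BMS}, \cite[Theorems 1.3, 1.4]{Sun1}) are stated for $\mu_H$-\emph{stable} sheaves; Theorem \ref{thm2.9} by itself does not produce semistability of the pushforward. The paper sidesteps proving tilt-semistability of the full pushforward: it takes a Jordan--H\"older filtration with $\mu_H$-stable factors $\mathcal{Q}_i$, all of slope $\frac{m}{p^l}$; by Mehta--Ramanathan \cite{MR} (the hypotheses $K_X\sim_{num}0$ and $T_X$ semistable give $\mu_H^+(\Omega_X^1)\leq0$, so each $\mathcal{Q}_i$ is \emph{strongly} semistable---a point that cannot be skipped in characteristic $p$, where Bogomolov's inequality is only available for strongly semistable sheaves) it forces $\overline{\Delta}_H(\mathcal{Q}_i)=0$ for every $i$ and, moreover, that all reduced invariants of $\mathcal{Q}_i$ agree with those of the pushforward; then the cited stable-sheaf results make each $\mathcal{Q}_i$ (resp.\ $\mathcal{Q}_i[1]$) tilt-stable for $\beta<\frac{m}{p^l}$ (resp.\ $\beta\geq\frac{m}{p^l}$) with limiting slope $\frac{1}{2}(\frac{m}{p^l}-\beta_0)$, and the vanishings follow from $\hom\big((F^l)_*\mathcal{O}_X(L),\mathcal{G}\big)\leq\sum_i\hom(\mathcal{Q}_i,\mathcal{G})$ and $\hom\big(\mathcal{G},(F^l)_*\mathcal{O}_X(L)[1]\big)\leq\sum_i\hom(\mathcal{G},\mathcal{Q}_i[1])$. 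If you insist on your direct version, you would in effect have to reprove this: the equality of reduced invariants shows the pushforward is an iterated extension of tilt-semistable objects of equal slope, which is the paper's argument in disguise, and it is exactly where your ``standard Bogomolov-type argument'' needs to be filled in.
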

\begin{proof}
By Lemma \ref{Chern} and \ref{Frob}, one sees that
$\mathcal{E}:=(F^l)_*\mathcal{O}_X(L)$ is $\mu_H$-semistable with
$$\left(H^n\ch_0(\mathcal{E}),H^{n-1}\ch_1(\mathcal{E})
,H^{n-2}\ch_2(\mathcal{E})\right)=(p^{ln}H^n,p^{l(n-1)}mH^n,\frac{1}{2}p^{l(n-2)}m^2H^n).$$
This implies
$\mu_H(\mathcal{E})=\frac{p^{l(n-1)}mH^n}{p^{ln}H^n}=\frac{m}{p^l}$
and $\overline{\Delta}_H(\mathcal{E})=0$. Consider its
Jordan-H\"older filtration
$$0=\mathcal{E}_0\subset\mathcal{E}_1\subset\cdots\subset\mathcal{E}_{s-1}\subset\mathcal{E}_s=\mathcal{E},$$
and set $\mathcal{Q}_i$ be the $\mu_H$-stable sheaf
$\mathcal{E}_{i}/\mathcal{E}_{i-1}$. It turns out that
$$\mu_H(\mathcal{Q}_i)=\mu_H(\mathcal{E}_s)=\frac{m}{p^l},$$
for any $i>0$. From \cite[Theorem 2.1]{MR}, one sees that
$\mathcal{Q}_i$ is strongly semistable. Thus by Bogomolov's
inequality for strongly semistable sheaves, we deduce that
\begin{eqnarray*}
0=\frac{\overline{\Delta}_H(\mathcal{E}_s)}{H^n\rank
\mathcal{E}_s}&=&
\mu_{H}(\mathcal{E}_s)H^{n-1}\ch_1(\mathcal{E}_s)-2H^{n-2}\ch_2(\mathcal{E}_s)\\
&=&\mu_{H}(\mathcal{E}_s)\sum_{i=1}^sH^{n-1}\ch_1(\mathcal{Q}_i)-2\sum_{i=1}^sH^{n-2}\ch_2(\mathcal{Q}_i)\\
&=&\sum_{i=1}^s\Big(\mu_{H}(\mathcal{Q}_i)H^{n-1}\ch_1(\mathcal{Q}_i)-2H^{n-2}\ch_2(\mathcal{Q}_i)\Big)\\
&=&\sum_{i=1}^s\frac{\overline{\Delta}_H(\mathcal{Q}_i)}{H^n\rank
\mathcal{Q}_i}\geq0,
\end{eqnarray*}
It follows that
$$\frac{m}{p^l}=\frac{H^{n-1}\ch_1(\mathcal{Q}_i)}{H^n\ch_0(\mathcal{Q}_i)}=\frac{2H^{n-2}\ch_2(\mathcal{Q}_i)}{H^{n-1}\ch_1(\mathcal{Q}_i)}$$
and
$$\nu_{\alpha,\beta}(\mathcal{Q}_i)=\frac{m^2-2\beta mp^l+(\beta^2-\alpha^2)p^{2l}}{2(p^lm-\beta
p^{2l})}=\frac{(m-\beta p^l)^2-\alpha^2p^{2l}}{2p^l(m-\beta p^l)}.$$
So
$$\lim_{(\alpha,\beta)\rightarrow(0,\beta_0)}\nu_{\alpha,\beta}(\mathcal{Q}_i)=\frac{1}{2}(\frac{m}{p^l}-\beta_0).$$

On the other hand, by \cite[Corollary 3.11]{BMS} or \cite[Theorem
1.3, 1.4]{Sun1} one sees that $\mathcal{Q}_i$ is
$\nu_{\alpha,\beta}$-stable for any $\alpha>0$,
$\beta<\frac{m}{p^l}$ and $\mathcal{Q}_i[1]$ is
$\nu_{\alpha,\beta}$-stable for any $\alpha>0$,
$\beta\geq\frac{m}{p^l}$. These imply that $\hom(\mathcal{Q}_i,
\mathcal{G})=0$ if $\beta_0<\frac{m}{p^l}$ and
$\hom(\mathcal{G},\mathcal{Q}_i[1])=0$ if $\beta_0>\frac{m}{p^l}$.
The conclusion of the proposition follows from
$$\hom((F^l)_*\mathcal{O}_X(L),
\mathcal{G})\leq\sum_{i=1}^s\hom(\mathcal{Q}_i, \mathcal{G})$$ and
$$\hom(\mathcal{G}, (F^l)_*\mathcal{O}_X(L)[1]
)\leq\sum_{i=1}^s\hom(\mathcal{G},\mathcal{Q}_i[1]).$$
\end{proof}

\section{The proof of the main theorem}\label{S4}
In this section, we will prove Theorem \ref{main}. The standard
spreading out technique and the Frobenius morphism will be used. We
keep the notations in Theorem \ref{main}.

In the case of $\Char(k)=0$, there is a subring $R\subset k$,
finitely generated over $\mathbb{Z}$, and a scheme
$$\pi:
\mathcal{X}\rightarrow S=\spec R$$ so that $\pi$ is smooth,
projective and $X=\mathcal{X}\times_{R}k$. We also have an object
$\mathcal{E}\in\D^b(\mathcal{X})$ and a divisor $\mathcal{H}$ on
$\mathcal{X}$ such that $E=\mathcal{E}\times_Rk$ and
$\mathcal{H}=H\times_Rk$. By the openness of semistability, one sees
that $\mathcal{X}_{s}$ satisfies the assumptions in Theorem
\ref{main} for a general point $s\in S$. Since the semistability of
sheaves is preserved by field extensions and Bogomolov's inequality
holds for any $\mu_{\mathcal{H}_{\bar{s}}}$-semistable sheaves on
the geometric fiber of $\pi$ over a general point $s\in S$,
Bogomolov's inequality holds for any
$\mu_{\mathcal{H}_{s}}$-semistable sheaves on the fiber of $\pi$
over a general point $s\in S$. By the openness of tilt-stability
(see \cite[Proposition 25.3]{BLMNPS}), it follows that for a general
closed point $s\in S$, $\mathcal{E}_s\in\Coh^{\beta
\mathcal{H}_s}(\mathcal{X}_s)$ is $\nu_{\alpha,\beta}$-stable. From
\cite[Theorem 12.17]{BLMNPS}, the same thing holds for the object
$\mathcal{E}_{\bar{s}}\in\Coh^{\beta
\mathcal{H}_{\bar{s}}}(\mathcal{X}_{\bar{s}})$. Since
$$\ch_3^{\beta}(E)-\frac{\alpha^2}{6}H^2\ch_1^{\beta}(E)=\ch_3^{\beta}(\mathcal{E}_{\bar{s}})-\frac{\alpha^2}{6}
\mathcal{H}_{\bar{s}}^2\ch_1^{\beta}(\mathcal{E}_{\bar{s}}),$$
Theorem \ref{main} will be proved if one can show it in positive
characteristic. Thus we may assume that  $\Char(k)=p>0$ and denote
by $F: X\rightarrow X$ the absolute Frobenius morphism in this
section.

By Theorem \ref{reduction}, Theorem \ref{main} will be proved, if we
can show the following:
\begin{theorem}\label{case1}
Under the situation of Theorem \ref{main}, let $E\in\Coh^{\beta
H}(X)$ be a $\overline{\beta}$-stable object with
$\overline{\beta}(E)\in[0, 1)$ and $\ch_0(E)\geq0$. Then we have
$\ch_3^{\overline{\beta}(E)}(E)\leq0$.
\end{theorem}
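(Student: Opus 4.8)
My plan is to follow the strategy sketched in the introduction. By the reduction carried out just above, I may assume $\Char(k)=p>0$; let $F\colon X\to X$ be the absolute Frobenius and set $\beta_0:=\overline{\beta}(E)\in[0,1)$. From the definition of $\overline{\beta}$ one has $H\ch_2^{\beta_0}(E)=0$ (resp. $\nu_{0,\beta_0}(E)=0$) according as $\ch_0(E)=0$ or not, whence $\lim_{(\alpha,\beta)\to(0,\beta_0)}\nu_{\alpha,\beta}(E)=0$, so that $E$ is an admissible choice for the object $\mathcal{G}$ of Proposition \ref{stab}. I would fix an integer $m\geq2$ with $mH$ very ample and, by Bertini, a smooth surface $D\in|mH|$ with inclusion $i\colon D\hookrightarrow X$; then $\mathrm{L}i^*E$ is a fixed object of $\D^b(D)$. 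For $n\geq1$ put $a_n:=\lfloor p^n\beta_0\rfloor+1$, $b_n:=a_n-m$, $\gamma_n:=a_n/p^n$, so $b_n<p^n\beta_0<a_n\leq p^n\beta_0+1$ and $\gamma_n\to\beta_0$.

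First I would compute an Euler characteristic. Because all Chern classes of $X$ vanish numerically (\cite[Theorem 2]{Sim}, \cite[Theorem 4.1]{Langer4}), the Todd class of $X$ drops out of the relevant Hirzebruch--Riemann--Roch term, and, using the computation of $\ch\bigl((F^n)_*\mathcal{O}_X(a_nH)\bigr)$ given (for $F^n$) in Lemma \ref{Chern}, one obtains
$$\chi\bigl((F^n)_*\mathcal{O}_X(a_nH),\,E\bigr)=p^{3n}\,\ch_3^{\gamma_n H}(E).$$
Since $\Hom\bigl((F^n)_*\mathcal{O}_X(a_nH),E\bigr)=0$ by Proposition \ref{stab}(1) (as $\beta_0<\gamma_n$) and every $\ext^i\geq0$, this forces $p^{3n}\,\ch_3^{\gamma_n H}(E)\leq\ext^2\bigl((F^n)_*\mathcal{O}_X(a_nH),E\bigr)$.

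The crux — and the step I expect to be the main obstacle — is the estimate $\ext^2\bigl((F^n)_*\mathcal{O}_X(a_nH),E\bigr)=O(p^{2n})$. By Serre duality on $X$ and the projection formula (using $(F^n)^*\omega_X\cong\mathcal{O}_X(p^nK_X)$) one has $\ext^2\bigl((F^n)_*\mathcal{O}_X(a_nH),E\bigr)=\ext^1\bigl(E,\,(F^n)_*\mathcal{O}_X(a_nH+p^nK_X)\bigr)$. I would restrict the exact sequence $0\to\mathcal{O}_X(b_nH+p^nK_X)\to\mathcal{O}_X(a_nH+p^nK_X)\to\mathcal{O}_X(a_nH+p^nK_X)|_D\to0$ to $D$ and apply the exact functor $(F^n)_*$; since the absolute Frobenius commutes with $i$, the third term becomes $i_*\bigl((F_D^n)_*\mathcal{O}_D(L_n)\bigr)$ with $L_n:=(a_nH+p^nK_X)|_D$. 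The resulting $\Ext^1(E,-)$-sequence, together with the vanishing $\ext^1\bigl(E,(F^n)_*\mathcal{O}_X(b_nH+p^nK_X)\bigr)=0$ supplied by Proposition \ref{stab}(2) (applicable because $b_n<p^n\beta_0$) and adjunction along $i$, yields
$$\ext^2\bigl((F^n)_*\mathcal{O}_X(a_nH),E\bigr)\leq \ext^1_D\bigl(\mathrm{L}i^*E,\,(F_D^n)_*\mathcal{O}_D(L_n)\bigr).$$
Here $\mathrm{L}i^*E$ is a \emph{fixed} object of $\D^b(D)$, whereas $(F_D^n)_*\mathcal{O}_D(L_n)$ is locally free of rank $p^{2n}$ on the \emph{surface} $D$ with $\mu_{H|_D}$-slopes bounded uniformly in $n$ (a standard property of Frobenius push-forwards, since $\mu_{H|_D}(\mathcal{O}_D(L_n))=a_n$ and $a_n/p^n\to\beta_0$), so every Chern number of it relevant to this $\Ext^1$ on $D$ is $O(p^{2n})$. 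Since the cohomology of a sheaf of bounded slope on a surface grows at most quadratically in its rank — e.g.\ by Langer's bounds on global sections together with Riemann--Roch on $D$ — each of the finitely many terms of the spectral sequence computing $\Ext^1_D(\mathrm{L}i^*E,(F_D^n)_*\mathcal{O}_D(L_n))$ is $O(p^{2n})$, which gives the claim.

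Granting this, the proof finishes quickly: the two displays give $\ch_3^{\gamma_n H}(E)\leq C\,p^{-n}$ with $C$ independent of $n$, and since $\gamma\mapsto\ch_3^{\gamma H}(E)$ is a polynomial and $\gamma_n\to\beta_0$, letting $n\to\infty$ yields $\ch_3^{\overline{\beta}(E)}(E)\leq0$. With Theorem \ref{reduction} and the reduction recalled above, this proves Theorem \ref{case1} and hence Theorem \ref{main}. I stress once more that the delicate point is Step 2: replacing $E$ by its cohomology sheaves $\mathcal{H}^{-1}(E),\mathcal{H}^0(E)$ and their Harder--Narasimhan factors only produces the useless bound $O(p^{3n})$, so one must keep $E$ as a tilt-stable object throughout, use Proposition \ref{stab}(2) to dispose of the complementary term, and pass to a surface section where the rank of the Frobenius push-forward drops from $p^{3n}$ to $p^{2n}$.
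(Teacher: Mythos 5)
Your proposal is essentially correct in outline but follows a genuinely different route from the paper. The paper works with $\chi\big(\mathcal{O}_X,(F^{a})^*E\otimes\mathcal{O}_X(-cH)\big)$ and is forced to split into three cases according to whether $\overline{\beta}(E)$ is integral, rational or irrational: the twists $c$ are built from the $p$-adic/arithmetic structure of $\overline{\beta}(E)$ (Euler's totient, Dirichlet approximation), the threshold of Proposition \ref{stab} is crossed by restricting to surfaces in $|H|$ up to $l_0,l_1$ times, and in the irrational case the resulting error term is only killed by a double limit $m\to\infty$, $n\to\infty$. Your choice of twist $a_n=\lfloor p^n\overline{\beta}(E)\rfloor+1$, placed on the other side of the pairing as $(F^n)_*\mathcal{O}_X(a_nH)$, approximates $p^n\overline{\beta}(E)$ to within $1$ regardless of the arithmetic nature of $\overline{\beta}(E)$; one application of Proposition \ref{stab}(1) kills $\hom$, and a single restriction to a fixed smooth $D\in|mH|$, $m\geq2$, together with Proposition \ref{stab}(2) handles $\ext^2$. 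If the surface estimate goes through, this eliminates the paper's trichotomy, the totient/Dirichlet bookkeeping and the long summations of Section \ref{S4}, and the concluding limit argument (Taylor expansion of $\ch_3^{\gamma H}(E)$ at $\overline{\beta}(E)$ using $H\ch_2^{\overline{\beta}(E)}(E)=0$) is the same as the paper's.

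The genuine gap is exactly the step you flag: the bound $\ext^1_D\big(\mathrm{L}i^*E,(F_D^n)_*\mathcal{O}_D(L_n)\big)=O(p^{2n})$ is asserted, not proved, and the justification offered is insufficient. Computing $\mu_{H|_D}(\mathcal{O}_D(L_n))$ only controls the \emph{average} slope of $(F_D^n)_*\mathcal{O}_D(L_n)$; Langer-type cohomology bounds require uniform control of $\mu_{\max}$ and $\mu_{\min}$, and on $D$ you cannot invoke Lemma \ref{Chern} or Lemma \ref{Frob} (here $K_D\sim_{num}mH|_D\neq0$, $c_2$ does not vanish, and $\Omega^1_D$ need not be semistable), nor is "cohomology grows at most quadratically in the rank" usable as stated (with rank $p^{2n}$ that reads $O(p^{4n})$; Langer's bound is linear in the rank with a $\big(\mu^++f(\mathrm{rk})\big)^2$ factor, so at best $O(n^2p^{2n})$ --- still sufficient, but only once the slopes are bounded). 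The gap is closable with tools already in the paper: either prove uniform slope bounds for iterated Frobenius pushforwards on $D$ (e.g.\ via $\Hom(M,(F_D^n)_*\mathcal{L})=\Hom((F_D^n)^*M,\mathcal{L})$ and \cite[Corollary 2.5]{Langer1}, with Grothendieck duality $((F_D^n)_*\mathcal{L})^\vee\otimes\omega_D\cong(F_D^n)_*(\mathcal{L}^{-1}\otimes\omega_D)$ for the $\mu_{\min}$ side, or citing \cite{SunX1}), or, more cleanly, use the same adjunction and duality to rewrite each term $\ext^j_D\big(\mathcal{H}^q(\mathrm{L}i^*E),(F_D^n)_*\mathcal{O}_D(L_n)\big)$ as cohomology on $D$ of $(F_D^n)^*(\text{fixed sheaf})$ twisted by line bundles of degree $O(p^n)$, which is precisely the situation handled by Lemma \ref{est}; the $j=1$ term is then controlled by Riemann--Roch on $D$ as in that lemma. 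With that step supplied (and a sentence noting $\ext^i\big((F^n)_*\mathcal{O}_X(a_nH),E\big)$ vanishes outside $-1\leq i\leq3$ so that $\chi\leq\hom+\ext^2$), your argument is a valid and noticeably shorter proof of Theorem \ref{case1}.
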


Since the statement of Theorem \ref{case1} is independent of scaling
$H$, we will assume throughout this section that $H$ is very ample.

\subsection{Proof of Theorem \ref{case1}, integral case} Assume that
$\overline{\beta}(E)=0$, i.e., $$H\ch_2(E)=0=K_X\ch_2(E).$$ We want
to show that $\ch_3(E)\leq0$.

We assume the contrary $\ch_3(E)>0$, and so $\ch_3(E)\geq1$. Since
$H^2\ch^{\overline{\beta}(E)}_1(E)=H^2\ch_1(E)\geq0$ and
$\ch_0(E)\geq0$, by using the Riemann-Roch theorem we can compute
\begin{eqnarray*}
\chi\big(\mathcal{O}_{X}, (F^n)^*E\big)
=p^{3n}\ch_3(E)+O(p^{2n})\geq p^{3n}+O(p^{2n}),
\end{eqnarray*}
for any positive integer $n$. On the other hand, since $E$ is a two
term complex concentrated in degree $-1$ and $0$, one sees
\begin{eqnarray*}
\chi\big(\mathcal{O}_{X}, (F^n)^*E\big)
\leq\hom\big(\mathcal{O}_{X},
(F^n)^*E\big)+\ext^2\big(\mathcal{O}_{X}, (F^n)^*E\big).
\end{eqnarray*}
Our goal is to bound from above the right hand side of this
inequality with a lower order in $p^n$.

\bigskip
\textbf{\emph{Bound on $\hom(\mathcal{O}_{X}, (F^n)^*E)$}}
\bigskip

We want to show
\begin{equation}\label{4.3}
\hom(\mathcal{O}_{X}, (F^n)^*E)=O(p^{2n}).
\end{equation}
We consider the exact triangle in $\D^b(X)$
$$(F^n)^*E\otimes\mathcal{O}_X(-H)
\rightarrow (F^n)^*E \rightarrow((F^n)^*E)\otimes\mathcal{O}_{Y},$$
where $Y$ is a general smooth surface in $|H|$. It follows that
\begin{eqnarray*}
\hom(\mathcal{O}_{X}, (F^n)^*E)\leq\hom(\mathcal{O}_{X},
(F^n)^*E\otimes\mathcal{O}_X(-H))+\hom(\mathcal{O}_{X},
((F^n)^*E)\otimes\mathcal{O}_{Y}).
\end{eqnarray*}
By Serre duality and adjointness between $(F^n)^*$ and $(F^n)_*$,
one obtains
\begin{eqnarray*}
\hom(\mathcal{O}_{X},
(F^n)^*E\otimes\mathcal{O}_X(-H))=\hom((F^n)_*\mathcal{O}_{X}(H+K_X),
E\otimes\omega_X).
\end{eqnarray*}
Since $K_X\sim_{num}0$, Proposition \ref{stab} gives
$\hom((F^n)_*\mathcal{O}_{X}(H+K_X), E\otimes\omega_X)=0$. Thus we
have
\begin{eqnarray*}
\hom(\mathcal{O}_{X}, (F^n)^*E)\leq\hom(\mathcal{O}_{X},
((F^n)^*E)\otimes\mathcal{O}_{Y}).
\end{eqnarray*}

We then consider the cohomology sheaves of $E$ and the exact
triangle in $\D^b(X)$
$$\mathcal{H}^{-1}(E)[1]\rightarrow E\rightarrow\mathcal{H}^{0}(E).$$
Since $Y$ is general, \cite[Lemma 7.1]{BMS} gives
\begin{eqnarray*}
\hom(\mathcal{O}_{X}, ((F^n)^*E)\otimes\mathcal{O}_{Y})\leq
h^0\big(\left((F^n)^*\mathcal{H}^0(E)\right)|_{Y}\big)+h^1\big(\left((F^n)^*\mathcal{H}^{-1}(E)\right)|_{Y}\big).
\end{eqnarray*}
The bound (\ref{4.3}) will then follow from the following lemma.

\begin{lemma}\label{est}
Let $\mathcal{Q}$ be a sheaf and $\mathcal{L}$ be a line bundle on
$X$. Let $Y$ be a general smooth surface in the very ample linear
system $|bH|$, where $b$ is a positive integer. Then for any $0\leq
i\leq2$, there are rational numbers $a_i$ ($1\leq i\leq 6$) which
are independent of $n$ and $\mathcal{L}$ such that
\begin{eqnarray*}
h^i(Y,((F^n)^*\mathcal{Q}\otimes \mathcal{L})|_{Y})\leq
a_1p^{2n}+a_2\mu_H(\mathcal{L})p^n+a_3p^n+a_4\mu_H(\mathcal{L})^2+a_5\mu_H(\mathcal{L})+a_6.
\end{eqnarray*}
\end{lemma}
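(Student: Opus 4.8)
The plan is to reduce the estimate on $Y$ to an estimate about the restricted sheaf $\mathcal{Q}|_Y$ on the surface $Y$, and then apply the asymptotic Riemann–Roch on $Y$ together with a slope-stability argument to control the individual cohomology groups $h^0$, $h^1$, $h^2$. First I would fix $Y \in |bH|$ general and smooth (a very general member, so that $\mathcal{Q}|_Y$ behaves well with respect to the Harder–Narasimhan filtration of $\mathcal{Q}$). Write $\mathcal{Q}_Y := \mathcal{Q}|_Y$, a coherent sheaf of rank $r = \rank \mathcal{Q}$ on $Y$; note that $(F^n)^*\mathcal{Q}\otimes\mathcal{L}$ restricted to $Y$ equals $(F_Y^n)^*\mathcal{Q}_Y \otimes \mathcal{L}|_Y$, where $F_Y$ is the Frobenius on $Y$ (the absolute Frobenius commutes with restriction to the Frobenius-stable subscheme $Y$, using $Y \in |bH|$ and that $(F^n)^*$ on line bundles is multiplication by $p^n$ on the class — here we only need $Y$ itself is $F$-stable, which it is). Set $H_Y := H|_Y$, an ample divisor on the surface $Y$.

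Second, I would compute the relevant numerical invariants on $Y$. By the projection/adjunction formulas and the fact that $(F_Y^n)^*$ multiplies $\ch_i$ by $p^{n i}$ with an extra contribution from the Frobenius twist on $c_1$, one has $\ch_0 = r$, $H_Y \cdot \ch_1\big((F_Y^n)^*\mathcal{Q}_Y\otimes\mathcal{L}|_Y\big) = p^n\, H_Y\ch_1(\mathcal{Q}_Y) + r\, H_Y\cdot(\mathcal{L}|_Y) = p^n c + r b\, \mu_H(\mathcal{L}) H^3$ for a constant $c$ independent of $n$ and $\mathcal{L}$, and the discriminant $\overline{\Delta}_{H_Y}$ scales like $p^{2n}$ times a constant (independent of $\mathcal{L}$, since twisting by a line bundle leaves $\overline{\Delta}$ unchanged). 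Then asymptotic Riemann–Roch on the surface $Y$ gives
$$
\chi\big(Y, (F_Y^n)^*\mathcal{Q}_Y\otimes\mathcal{L}|_Y\big) = \tfrac12 (\mathcal{L}|_Y)^2 \cdot r + p^n (\text{linear in } \mathcal{L}|_Y) + O(p^{2n}),
$$
which, bounding $(\mathcal{L}|_Y)^2$ and $H_Y\cdot(\mathcal{L}|_Y)$ in terms of $\mu_H(\mathcal{L})$ via the Hodge index theorem on $Y$ (so that $(\mathcal{L}|_Y)^2 \le$ const $\cdot \mu_H(\mathcal{L})^2$ up to a correction bounded by the discriminant of $\mathcal{L}$ — and here we may pass to the torsion-free quotient and absorb torsion contributions, which only decrease $h^0$ and are bounded anyway), has the claimed shape $a_1 p^{2n} + a_2\mu_H(\mathcal{L})p^n + a_3 p^n + a_4\mu_H(\mathcal{L})^2 + a_5\mu_H(\mathcal{L}) + a_6$. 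Since $h^i \ge 0$, it remains to bound $h^0$ and $h^2$ separately in the same form, as $h^1 \le h^0 + h^2 - \chi$ then follows.

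Third, for $h^0$ I would use slope-stability: by Lemma \ref{Frob} applied on $Y$ (or rather its analogue; alternatively since $\Omega^1_X|_Y$ controls $\Omega^1_Y$ via the conormal sequence and $\Omega^1_X$ is strongly $\mu_H$-semistable under our hypotheses, $\Omega^1_Y$ is $\mu_{H_Y}$-semistable, hence all $\mu_{H_Y}$-semistable sheaves on $Y$ are strongly so by \cite{MR}), a $\mu_{H_Y}$-semistable sheaf $\mathcal{G}$ on $Y$ with $\mu_{H_Y}(\mathcal{G}) < 0$ has $h^0(\mathcal{G}) = 0$, and in general $h^0(\mathcal{G}) \le \rank(\mathcal{G}) \cdot \big(\mu^+_{H_Y}(\mathcal{G}) \cdot (\text{const}) + \text{const}\big)$ when $\mu^+ \ge 0$ — a standard bound (e.g. via Langer's effective restriction/estimates, or by induction on the HN length using that on a surface $h^0$ of a semistable sheaf of slope $\le M$ is $O(M^2)$ by Riemann–Roch plus $h^2$-vanishing in the appropriate range). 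Applying this to $\mathcal{G} = (F_Y^n)^*\mathcal{Q}_Y\otimes\mathcal{L}|_Y$, whose maximal slope is $\mu^+_{H_Y}\big((F_Y^n)^*\mathcal{Q}_Y\big) + b\mu_H(\mathcal{L})H^3 = p^n \mu^+ (\text{const}) + b\mu_H(\mathcal{L})H^3$ (using that Frobenius pullback multiplies slopes by $p^n$ and preserves the HN filtration up to strong semistability), and squaring, yields exactly a bound of the form $a_1 p^{2n} + a_2 \mu_H(\mathcal{L}) p^n + a_4 \mu_H(\mathcal{L})^2 + \text{lower}$. The case $i = 2$ is handled dually via Serre duality on $Y$: $h^2(Y, \mathcal{G}) = h^0(Y, \mathcal{G}^\vee \otimes \omega_Y)$, and $\omega_Y \cong \mathcal{O}_Y(bH + K_X)|_Y \sim_{num} bH_Y$ since $K_X \sim_{num} 0$, so the same $h^0$-bound applies to $\mathcal{G}^\vee \otimes \omega_Y$ (after replacing $\mathcal{Q}$ by a reflexive hull, $\mathcal{G}^\vee$ has minimal slope $-\mu^+_{H_Y}(\mathcal{G})$, controlled the same way).

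The main obstacle I expect is making the $h^0$-bound for semistable sheaves on the surface $Y$ \emph{uniform} — i.e. genuinely with constants independent of $n$ and $\mathcal{L}$, not just $O_{\mathcal{Q},Y}(\cdot)$ for fixed data. The subtlety is that $(F_Y^n)^*\mathcal{Q}_Y$ has rank growing in... no — rank is constant, but its HN polygon and discriminant grow like $p^n$, $p^{2n}$, and one must check the standard cohomology estimates scale correctly. This is precisely where strong semistability (guaranteed here by $\mu^+_H(\Omega^1_X) \le 0$, cf. the proof of Corollary \ref{tilt}) is essential: it ensures the HN filtration of $\mathcal{Q}_Y$ is preserved by $F_Y^n$, so $\mu^\pm_{H_Y}\big((F_Y^n)^*\mathcal{Q}_Y\big) = p^n \mu^\pm_{H_Y}(\mathcal{Q}_Y)$ exactly, and the discriminant of each graded piece is non-negative, pinning down all numerical quantities and letting the Riemann–Roch/Hodge-index estimates go through with fixed constants.
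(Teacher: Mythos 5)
Your overall skeleton matches the paper's: restrict to $Y$, identify $((F^n)^*\mathcal{Q})|_{Y}$ with $(F_Y^n)^*(\mathcal{Q}|_{Y})$, bound $h^0$ and $h^2$ via slope estimates together with an effective $h^0$ bound on the surface, and recover $h^1$ from Riemann--Roch. However, the step you yourself single out as the main obstacle --- uniform control of $\mu^{\pm}_{H|_Y}\big((F_Y^n)^*(\mathcal{Q}|_Y)\big)$ --- is resolved incorrectly. You want exact scaling $\mu^{\pm}\big((F_Y^n)^*(\mathcal{Q}|_Y)\big)=p^n\mu^{\pm}(\mathcal{Q}|_Y)$ via strong semistability on $Y$, deduced from Mehta--Ramanathan. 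That criterion requires $\mu^{+}_{H|_Y}(\Omega^1_Y)\leq 0$, which fails here: by adjunction $K_Y=(K_X+bH)|_Y\sim_{num}bH|_Y$, so $\Omega^1_Y$ has strictly positive slope; the hypothesis $\mu^+_H(\Omega^1_X)\leq 0$ lives on $X$, not on the surface where the Frobenius pullbacks are taken (and semistability of $\Omega^1_X|_Y$, even granted a restriction theorem for the fixed system $|bH|$, does not pass to the quotient $\Omega^1_Y$). Moreover the lemma is stated for an \emph{arbitrary} coherent sheaf $\mathcal{Q}$ --- in the application $\mathcal{Q}=\mathcal{H}^0(E)$ or $\mathcal{H}^{-1}(E)$ carries no semistability hypothesis, and restriction to $Y$ would not preserve (strong) semistability anyway --- so any argument predicated on the Harder--Narasimhan filtration of $\mathcal{Q}|_Y$ being Frobenius-stable has a genuine gap.

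The fix, which is what the paper does, needs no semistability at all: Langer's bound for Frobenius pullbacks of arbitrary sheaves (\cite[Corollary 2.5]{Langer1}) gives $\mu^{+}_{H|_Y}\big((F_Y^n)^*(\mathcal{Q}|_Y)\big)\leq p^n\mu^{+}_{H|_Y}(\mathcal{Q}|_Y)+\frac{p^n(\rank\mathcal{Q}-1)}{p-1}\,abH^3$, where $a$ is chosen so that $T_Y(aH|_Y)$ is globally generated; the $O(p^n)$ error is perfectly sufficient for the stated estimate, so exact scaling is not needed. This is then combined with Langer's effective estimate \cite[Theorem 3.3]{Langer2}, which is quadratic in $\mu^{+}$ (your displayed $h^0$ bound is written as linear in $\mu^+$, which is not correct on a surface, though your later ``squaring'' suggests you intend the quadratic form), and $h^2$, $h^1$ are handled by Serre duality and Riemann--Roch as you propose. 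Finally, the reduction to the torsion-free case cannot be dismissed by saying torsion ``only decreases $h^0$'': the torsion subsheaf contributes its own $h^0$ on $Y$ and must be bounded uniformly in $n$, which is done as in \cite[Lemma 7.3]{BMS}.
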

\begin{proof}
We denote by $F_Y$ the absolute Frobenius morphism of $Y$ and assume
first that $\mathcal{Q}$ is torsion free. Take a positive integer
$a$ such that $T_Y(aH|_Y)$ is globally generated. Since
$\big((F^n)^*\mathcal{Q}\big)|_{Y}=(F_Y^n)^*(\mathcal{Q}|_{Y})$, by
\cite[Corollary 2.5]{Langer1}, one obtains that
\begin{eqnarray*}
\mu^+_{H|_{Y}}\big(((F^n)^*\mathcal{Q})|_{Y}\big)&\leq&
p^{n}\mu^+_{H|_{Y}}\big(\mathcal{Q}|_{Y}\big)+\frac{p^n(\rank\mathcal{Q}-1)
}{p-1}abH^3\\
\mu^-_{H|_{Y}}\big(((F^n)^*\mathcal{Q})|_{Y}\big)&\geq&
p^{n}\mu^-_{H|_{Y}}\big(\mathcal{Q}|_{Y}\big)-\frac{p^n(\rank\mathcal{Q}-1)
}{p-1}abH^3.
\end{eqnarray*}
Hence
\begin{eqnarray*}
\mu^+:=\mu^+_{H_Y}\big(((F^n)^*\mathcal{Q}\otimes
\mathcal{L})|_{Y}\big)&=&\mu^+_{H|_{Y}}\big(((F^n)^*\mathcal{Q})|_{Y}\big)+\mu_{H|_{Y}}(\mathcal{L}|_Y)\\
&\leq&
p^{n}\mu^+_{H|_{Y}}\big(\mathcal{Q}|_{Y}\big)+\frac{p^n(\rank\mathcal{Q}-1)
}{p-1}abH^3+\mu_{H|_{Y}}(\mathcal{L}|_Y)\\
&=&p^{n}\mu^+_{H|_{Y}}\big(\mathcal{Q}|_{Y}\big)+\frac{p^n(\rank\mathcal{Q}-1)
}{p-1}abH^3+\mu_{H}(\mathcal{L})
\end{eqnarray*}
From Langer's estimation \cite[Theorem 3.3]{Langer2}, it follows
that
\begin{eqnarray*}
&&h^0(Y,((F^n)^*\mathcal{Q}\otimes \mathcal{L})|_{Y})\\
&\leq& \left\{
\begin{array}{lcl}
\frac{(\rank \mathcal{Q})bH^3}{2}\Big(\mu^++f(\rank
\mathcal{Q})+2\Big)\Big(\mu^++f(\rank \mathcal{Q})+1\Big),  & &\mbox{if}~\mu^+\geq0\\
&&\\
0, & &\mbox{otherwise}
\end{array}\right.\\
&\leq&b_1p^{2n}+b_2\mu_H(\mathcal{L})p^n+b_3p^n+b_4\mu_H(\mathcal{L})^2+b_5\mu_H(\mathcal{L})+b_6,
\end{eqnarray*}
where $f(\rank \mathcal{Q})=-1+\sum_{i=1}^{\rank
\mathcal{Q}}\frac{1}{i}$ and $b_i$'s are independent of $n$ and
$\mathcal{L}$.

The $h^2$-estimate follows similarly, by using Serre Duality. For
$h^1$, the Riemann-Roch theorem gives
\begin{eqnarray*}
h^1(Y,((F^n)^*\mathcal{Q}\otimes
\mathcal{L})|_{Y})&=&h^0(Y,((F^n)^*\mathcal{Q}\otimes
\mathcal{L})|_{Y})+h^2(Y,((F^n)^*\mathcal{Q}\otimes
\mathcal{L})|_{Y})\\&&-\chi(Y,((F^n)^*\mathcal{Q}\otimes
\mathcal{L})|_{Y}).
\end{eqnarray*}
It follows that the upper bound of $h^1$ has the same form as that
of $h^0$. This finishes the proof in the torsion-free case. The
proof for a general sheaf $\mathcal{Q}$ is the same as that of
\cite[Lemma 7.3]{BMS}.
\end{proof}

\bigskip
\textbf{\emph{Bound on $\ext^2\big(\mathcal{O}_{X}, (F^n)^*E\big)$}}
\bigskip

This is similar to the previous case. We consider the exact triangle
$$(F^n)^*E
\rightarrow ((F^n)^*E)\otimes\mathcal{O}_{X}(H) \rightarrow
\left((F^n)^*E\right)\otimes\mathcal{O}_{Y}(H).$$ By Proposition
\ref{stab}, Serre duality and the adjointness, one obtains
\begin{eqnarray*}
\ext^2\big(\mathcal{O}_{X}, ((F^n)^*E)\otimes\mathcal{O}_{X}(H)\big)
&=&\ext^1\big((F^n)^*E, \omega_{X}(-H)
\big)\\
&=&\ext^1\big(E,
(F^n)_*(\omega_{X}(-H))\big)\\
&=&\hom\big(E,
(F^n)_*(\omega_{X}(-H))[1]\big)\\
&=&0.
\end{eqnarray*}
Thus Lemma \ref{est} gives
\begin{eqnarray*}
\ext^2\big(\mathcal{O}_{X}, (F^n)^*E\big)
&\leq&\ext^1\big(\mathcal{O}_{X},
\left((F^n)^*E\right)\otimes\mathcal{O}_{Y}(H)\big)\\
&\leq&h^1\big((F^n)^*\mathcal{H}^0(E)
\otimes\mathcal{O}_{Y}(H)\big)\\
&&+h^2\big((F^n)^*\mathcal{H}^{-1}(E)
\otimes\mathcal{O}_{Y}(H)\big)\\
&=&O(p^{2n}).
\end{eqnarray*}

In conclusion, we have
$$p^{3n}+O(p^{2n})\leq\chi\big(\mathcal{O}_{X},
(F^n)^*E\big)\leq O(p^{2n}),$$ which gives the required
contradiction for $n$ sufficiently large.

\subsection{Proof of Theorem \ref{case1}, rational case} We assume
that $\overline{\beta}(E)\in\mathbb{Q}\setminus\mathbb{Z}$ and write
$\overline{\beta}(E)=\frac{v}{p^ru}$ with $p$ and $u$ coprime and
$p^ru>v>0$. By Euler's theorem, we have $$p^{n\varphi(u)}\equiv
1\mod u$$ for any positive integer $n$, where $\varphi(u)$ is
Euler's totient function. This implies that
$c_n:=\frac{p^{n\varphi(u)}-1}{u}$ is an integer and
\begin{equation}\label{4.2}
\frac{c_nv}{p^{n\varphi(u)+r}}=(1-\frac{1}{p^{n\varphi(u)}})\overline{\beta}(E).
\end{equation}

Set $a_n=n\varphi(u)+r$. By using the Riemann-Roch theorem we can
compute
\begin{eqnarray*}
\chi\big(\mathcal{O}_{X},
(F^{a_n})^*E\otimes\mathcal{O}_X(-c_nvH)\big)
&=&\ch_3\Big((F^{a_n})^*E\otimes\mathcal{O}_X(-c_nvH))\Big)+O(p^{2a_n})\\
&=&p^{3a_n}\Big(\ch_3^{c_nv/p^{a_n}}(E)\Big)+O(p^{2a_n}).
\end{eqnarray*}
From (\ref{4.2}), one obtains that
\begin{eqnarray*}
\ch_3^{c_nv/p^{a_n}}(E)&=&
\ch_3^{(1-\frac{1}{p^{n\varphi(u)}})\overline{\beta}(E)}(E)\\
&=&\ch_3^{\overline{\beta}(E)}(E)+(\frac{\overline{\beta}(E)}{p^{n\varphi(u)}})^2\frac{H^2\ch_1^{\overline{\beta}(E)}(E)}{2}
+(\frac{\overline{\beta}(E)}{p^{n\varphi(u)}})^3\frac{H^3\ch_0^{\overline{\beta}(E)}(E)}{6}.
\end{eqnarray*}
Hence we deduce that
\begin{eqnarray*}
\chi\big(\mathcal{O}_{X},
(F^{a_n})^*E\otimes\mathcal{O}_X(-c_nvH)\big)
&=&p^{3a_n}\Big(\ch_3^{c_nv/p^{a_n}}(E)\Big)+O(p^{2a_n})\\
&=&p^{3a_n}\Big(\ch_3^{\overline{\beta}(E)}(E)\Big)+O(p^{2a_n}).
\end{eqnarray*}
and
\begin{eqnarray*}
\chi\big(\mathcal{O}_{X},
(F^{a_n})^*E\otimes\mathcal{O}_X(-c_nvH)\big)
&\leq&\hom\big(\mathcal{O}_{X},
(F^{a_n})^*E\otimes\mathcal{O}_X(-c_nvH)\big)\\
&&+\ext^2\big(\mathcal{O}_{X},
(F^{a_n})^*E\otimes\mathcal{O}_X(-c_nvH)\big).
\end{eqnarray*}
Since
$$\frac{c_nv+p^r}{p^{a_n}}=\overline{\beta}(E)+(1-\overline{\beta}(E))\frac{1}{p^{n\varphi(u)}}>\overline{\beta}(E),$$
from Proposition \ref{stab}, it follows that
\begin{eqnarray*}
&&\hom\big(\mathcal{O}_{X},
(F^{a_n})^*E\otimes\mathcal{O}_X(-c_nvH-p^rH)\big)\\
&=&\hom\big((F^{a_n})_*\mathcal{O}_X(K_X+c_nvH+p^rH),
E\otimes\omega_X\big)\\
&=&0
\end{eqnarray*}
and
\begin{eqnarray*}
\ext^2\big(\mathcal{O}_{X},
(F^{a_n})^*E\otimes\mathcal{O}_X(-c_nvH)\big)&=&\ext^1\big(E,
(F^{a_n})_*\mathcal{O}_X(K_X+c_nvH)
\big)\\
&=&0
\end{eqnarray*}
Similar to the proof of (\ref{4.3}), one obtains
\begin{eqnarray*}
\hom\big(\mathcal{O}_{X},
(F^{a_n})^*E\otimes\mathcal{O}_X(-c_nvH)\big)
&\leq&\hom\big(\mathcal{O}_{X},
(F^{a_n})^*E\otimes\mathcal{O}_X(-c_nvH-p^rH)\big)\\
&&+\hom\big(\mathcal{O}_{X},
(F^{a_n})^*E\otimes\mathcal{O}_Z(-c_nvH)\big)\\
&=&\hom\big(\mathcal{O}_{X},
(F^{a_n})^*E\otimes\mathcal{O}_Z(-c_nvH)\big)\\
&\leq&h^0\big((F^{a_n})^*\mathcal{H}^0(E)\otimes\mathcal{O}_Z(-c_nvH)\big)\\
&&+h^1\big((F^{a_n})^*\mathcal{H}^{-1}(E)\otimes\mathcal{O}_Z(-c_nvH)\big)\\
&=&O(p^{2a_n}),
\end{eqnarray*}
where $Z$ is a general smooth surface in $|p^rH|$.

In conclusion, we have
$$p^{3a_n}\ch_3^{\overline{\beta}(E)}(E)+O(p^{2a_n})\leq\chi\big(\mathcal{O}_{X},
(F^{a_n})^*E\otimes\mathcal{O}_X(-c_nvH)\big)\leq O(p^{2a_n}).$$
This gives $\ch_3^{\overline{\beta}(E)}(E)\leq0$ by taking
$n\rightarrow+\infty$.

\subsection{Proof of Theorem \ref{case1}, irrational case} We now assume
that $\overline{\beta}(E)\in\mathbb{R}\setminus\mathbb{Q}$. By
assumption, there exists $0<\varepsilon<\overline{\beta}(E)$ such
that $E$ is $\nu_{\alpha,\beta}$-stable for all $(\alpha,\beta)$ in
$$V_{\varepsilon}:=\{(\alpha,\beta)\in\mathbb{R}_{>0}\times\mathbb{R}:0<\alpha<\varepsilon,
\overline{\beta}(E)-\varepsilon<\beta<\overline{\beta}(E)+\varepsilon\}.$$
By the Dirichlet approximation theorem, there exists a sequence
$\{\beta_n=\frac{v_n}{p^{r_n}u_n}\}_{n\in \mathbb{N}}$ of rational
numbers with $u_n>0$, $v_n>0$, $r_n\geq0$, $u_n$ and $p$ coprime and
$p^{r_n}u_n\rightarrow+\infty$ as $n\rightarrow+\infty$ such that
$$\Big|\overline{\beta}(E)-\frac{v_n}{p^{r_n}u_n}\Big|<\frac{1}{p^{2r_n}u^2_n}<\varepsilon$$
for all $n$. As in the rational case, by Euler's theorem, for any
$m\geq1$,
$$c_{mn}:=\frac{p^{m\varphi(u_n)}-1}{u_n}$$ is a positive integer. It
turns out that
\begin{eqnarray}\label{4.10}
\nonumber(1-\frac{1}{p^{m\varphi(u_n)}})(\overline{\beta}(E)-\frac{1}{p^{2r_n}u_n^2})&<&\frac{c_{mn}v_n}{p^{m\varphi(u_n)+r_n}}\\
&=&(1-\frac{1}{p^{m\varphi(u_n)}})
\beta_n\\
\nonumber&<&(1-\frac{1}{p^{m\varphi(u_n)}})(\overline{\beta}(E)+\frac{1}{p^{2r_n}u_n^2}).
\end{eqnarray}
Let $a_{mn}:=m\varphi(u_n)+r_n$ and
$Q_{mn}:=(F^{a_{mn}})^*E\otimes\mathcal{O}_X(-c_{mn}v_nH)$. We
compute, for $m\gg0$,
\begin{eqnarray}\label{RR}
&&\chi\big(\mathcal{O}_{X},
Q_{mn}\big)\\
\nonumber&=&\ch_3((F^{a_{mn}})^*E\otimes\mathcal{O}_X(-c_{mn}v_nH))+O(p^{2a_{mn}})\\
\nonumber&=&p^{3a_{mn}}\ch_3^{c_{mn}v_n/p^{a_{mn}}}(E)+O(p^{2a_{mn}})\\
\nonumber&=&p^{3a_{mn}}\Big(\ch_3^{\beta_n}(E)+\frac{\beta_n}{p^{m\varphi(u_n)}}H\ch_2^{\beta_n}(E)\\
\nonumber&&+(\frac{\beta_n}{p^{m\varphi(u_n)}})^2\frac{H^2\ch_1^{\beta_n}(E)}{2}
+(\frac{\beta_n}{p^{m\varphi(u_n)}})^3\frac{H^3\ch_0^{\beta_n}(E)}{6}\Big)+O(p^{2a_{mn}})\\
\nonumber&\geq&p^{3a_{mn}}\ch_3^{\beta_n}(E)+O(p^{2a_{mn}})\\
\nonumber&\geq&p^{3a_{mn}}\ch_3^{\overline{\beta}(E)}(E)+O(p^{2a_{mn}}).
\end{eqnarray}
The last inequality follows since, by definition, $\ch_3^{\beta}(E)$
has a local minimum at $\beta=\overline{\beta}(E)$. As in the
previous case, we want to bound
\begin{eqnarray}\label{Euler}
\chi\big(\mathcal{O}_{X}, Q_{mn}\big)\leq \hom\big(\mathcal{O}_{X},
Q_{mn}\big)+\ext^2\big(\mathcal{O}_{X}, Q_{mn}\big)
\end{eqnarray}
for $m\gg0$ and $n\gg0$.

We let
$l_0=\lceil\frac{p^{m\varphi(u_n)}-1}{p^{r_n}u_n^2}+p^{r_n}\overline{\beta}(E)\rceil$
and
$l_1=\lceil\frac{p^{m\varphi(u_n)}-1}{p^{r_n}u_n^2}-p^{r_n}\overline{\beta}(E)\rceil$.
Then by (\ref{4.10}) one has
\begin{eqnarray*}
\frac{c_{mn}v_n+l_0}{p^{a_{mn}}}&>&(1-\frac{1}{p^{m\varphi(u_n)}})(\overline{\beta}(E)-\frac{1}{p^{2r_n}u_n^2})+\frac{l_0}{p^{a_{mn}}}\\
&=&\overline{\beta}(E)-\frac{1}{p^{2r_n}u_n^2}-\frac{\overline{\beta}(E)}{p^{m\varphi(u_n)}}+\frac{1}{p^{m\varphi(u_n)+2r_n}u_n^2}+\frac{l_0}{p^{a_{mn}}}\\
&=&\overline{\beta}(E)+\frac{1}{p^{a_{mn}}}\Big(l_0-\frac{p^{m\varphi(u_n)}-1}{p^{r_n}u_n^2}-p^{r_n}\overline{\beta}(E)\Big)\\
&>&\overline{\beta}(E)
\end{eqnarray*}
and
\begin{eqnarray*}
\frac{c_{mn}v_n-l_1}{p^{a_{mn}}}&<&(1-\frac{1}{p^{m\varphi(u_n)}})(\overline{\beta}(E)+\frac{1}{p^{2r_n}u_n^2})-\frac{l_1}{p^{a_{mn}}}\\
&=&\overline{\beta}(E)+\frac{1}{p^{2r_n}u_n^2}-\frac{\overline{\beta}(E)}{p^{m\varphi(u_n)}}-\frac{1}{p^{m\varphi(u_n)+2r_n}u_n^2}-\frac{l_1}{p^{a_{mn}}}\\
&=&\overline{\beta}(E)-\frac{1}{p^{a_{mn}}}\Big(l_1-\frac{p^{m\varphi(u_n)}-1}{p^{r_n}u_n^2}+p^{r_n}\overline{\beta}(E)\Big)\\
&<&\overline{\beta}(E)
\end{eqnarray*}
Thus Proposition \ref{stab} gives
\begin{eqnarray}\label{4.5}
&&\hom(\mathcal{O}_X, Q_{mn}(-l_0H))\\
\nonumber&=&\hom\Big(\mathcal{O}_X,
(F^{a_{mn}})^*E\otimes\mathcal{O}_X(-c_{mn}v_nH-l_0H)\Big)\\
\nonumber&=&\hom\Big(
(F^{a_{mn}})_*\mathcal{O}_X(K_X+c_{mn}v_nH+l_0H),
E\otimes\omega_X\Big)\\
\nonumber&=&0
\end{eqnarray}
and
\begin{eqnarray}\label{4.6}
&&\ext^2(\mathcal{O}_X, Q_{mn}(l_1H))\\
\nonumber&=&\ext^2\Big(\mathcal{O}_X,
(F^{a_{mn}})^*E\otimes\mathcal{O}_X(-c_{mn}v_nH+l_1H)\Big)\\
\nonumber&=&\hom\Big(E,
(F^{a_{mn}})_*\mathcal{O}_X(K_X+c_{mn}v_nH-l_1H)[1]
\Big)\\
\nonumber&=&0
\end{eqnarray}

Consider the exact triangle in $\D^b(X)$
$$Q_{mn}(-(j+1)H)
\rightarrow Q_{mn}(-jH) \rightarrow
Q_{mn}(-jH)\otimes\mathcal{O}_{Y},$$ where $0\leq j\leq l_0-1$ and
$Y$ is a general smooth surface in $|H|$. From (\ref{4.5}), it
follows that
\begin{eqnarray*}
&&\hom\big(\mathcal{O}_{X},
Q_{mn}\big)\\
&\leq&\hom\big(\mathcal{O}_{X},
Q_{mn}(-l_0H)\big)+\sum_{j=0}^{l_0-1}\hom\big(\mathcal{O}_{X},
Q_{mn}(-jH)\otimes\mathcal{O}_{Y}\big)\\
&=&\sum_{j=0}^{l_0-1}\hom\big(\mathcal{O}_{X},
Q_{mn}(-jH)\otimes\mathcal{O}_{Y}\big).
\end{eqnarray*}
On the other hand, by Lemma \ref{est} and the definition of
$c_{mn}$, one sees for $m\gg0$,
\begin{eqnarray*}
&&\sum_{j=0}^{l_0-1}\hom\big(\mathcal{O}_{X},
Q_{mn}(-jH)\otimes\mathcal{O}_{Y}\big)\\
&\leq&\sum_{j=0}^{l_0-1}\big(b_1p^{2a_{mn}}+(b_2p^{a_{mn}}+b_3)(c_{mn}v_n+j)+b_4p^{a_{mn}}+b_5(c_{mn}v_n+j)^2+b_6\big)\\
&=&\sum_{j=0}^{l_0-1}\Big(b_1p^{2a_{mn}}+b_2(c_{mn}v_n+j)p^{a_{mn}}+b_5(c_{mn}v_n+j)^2\Big)+O(p^{2a_{mn}})\\
&=&l_0(b_1p^{2a_{mn}}+b_2c_{mn}v_np^{a_{mn}}+b_5c_{mn}^2v_n^2)+\frac{l_0(l_0-1)}{2}(b_2p^{a_{mn}}+2b_5c_{mn}v_n)\\
&&+\frac{b_5}{6}l_0(l_0-1)(2l_0-1)+O(p^{2a_{mn}})\\
&=&\frac{p^{a_{mn}}}{p^{2r_n}u_n^2}(b_1p^{2a_{mn}}+b_2\beta_np^{2a_{mn}}+b_5\beta_n^2p^{2a_{mn}})+\frac{p^{2a_{mn}}}{2p^{4r_n}u_n^4}(b_2p^{a_{mn}}+2b_5\beta_np^{a_{mn}})\\
&&+\frac{b_5}{3}\frac{p^{3a_{mn}}}{p^{6r_n}u_n^6}+O(p^{2a_{mn}})\\
&\leq&\Big(\frac{d_1}{p^{2r_n}u_n^2}+\frac{d_2}{p^{4r_n}u_n^4}+\frac{d_3}{p^{6r_n}u_n^6}\Big)p^{3a_{mn}}+O(p^{2a_{mn}}),
\end{eqnarray*}
where $b_i$'s and $d_j$'s are independent of $m$ and $n$. Therefore
for $m\gg0$ we have
\begin{equation}\label{4.7}
\hom\big(\mathcal{O}_{X},
Q_{mn}\big)\leq\Big(\frac{d_1}{p^{2r_n}u_n^2}+\frac{d_2}{p^{4r_n}u_n^4}+\frac{d_3}{p^{6r_n}u_n^6}\Big)p^{3a_{mn}}+O(p^{2a_{mn}}).
\end{equation}

To bound $\ext^2\big(\mathcal{O}_{X}, Q_{mn}\big)$, as before, we
consider the exact triangle in $\D^b(X)$
$$Q_{mn}((j-1)H)
\rightarrow Q_{mn}(jH) \rightarrow
Q_{mn}(jH)\otimes\mathcal{O}_{Y},$$ where $1\leq j\leq l_1$. From
(\ref{4.6}), it follows that
\begin{eqnarray*}
\ext^2(\mathcal{O}_{X}, Q_{mn})&\leq&\ext^2(\mathcal{O}_{X},
Q_{mn}(l_1H))+\sum_{j=1}^{l_1}\ext^1(\mathcal{O}_{X},
Q_{mn}(jH)\otimes\mathcal{O}_{Y})\\
&=&\sum_{j=1}^{l_1}\ext^1(\mathcal{O}_{X},
Q_{mn}(jH)\otimes\mathcal{O}_{Y}).
\end{eqnarray*}
As the same proof of (\ref{4.7}), for $m\gg0$ one obtains,
\begin{equation}\label{4.8}
\ext^2(\mathcal{O}_{X}, Q_{mn})
\leq\Big(\frac{e_1}{p^{2r_n}u_n^2}+\frac{e_2}{p^{4r_n}u_n^4}+\frac{e_3}{p^{6r_n}u_n^6}\Big)p^{3a_{mn}}+O(p^{2a_{mn}}),
\end{equation}
where the constants $e_i$'s are independent of $m$ and $n$.

In conclusion, by (\ref{RR}), (\ref{Euler}), (\ref{4.7}) and
(\ref{4.8}), we obtain, for $m\gg0$,
\begin{eqnarray*}
&&\Big(\frac{d_1+e_1}{p^{2r_n}u_n^2}+\frac{d_2+e_2}{p^{4r_n}u_n^4}+\frac{d_3+e_3}{p^{6r_n}u_n^6}\Big)p^{3a_{mn}}+O(p^{2a_{mn}})\\
&\geq&\chi\big(\mathcal{O}_{X},
Q_{mn}\big)\\
&\geq&p^{3a_{mn}}\ch_3^{\overline{\beta}(E)}(E)+O(p^{2a_{mn}}).
\end{eqnarray*}
This implies
\begin{eqnarray*}
\ch_3^{\overline{\beta}(E)}(E)\leq\frac{d_1+e_1}{p^{2r_n}u_n^2}+\frac{d_2+e_2}{p^{4r_n}u_n^4}+\frac{d_3+e_3}{p^{6r_n}u_n^6}.
\end{eqnarray*}
Taking $n\rightarrow+\infty$, we conclude that
$\ch_3^{\overline{\beta}(E)}(E)\leq0$. This completes the proof of
Theorem \ref{case1}.

\section{The proof of Corollary \ref{Reider}}\label{S5}
In this section, we will prove Corollary \ref{Reider}. It was proved
in \cite[Theorem4.1]{BBMT} in characteristic zero. The
characteristic zero assumption was only used to guarantee the
Kodaira vanishing:
$$H^1(X, \mathcal{O}_X(K_X+H))=0,$$ so that one can proceed by
induction on the length $d$ of $Z$ (see \cite[Assumption
(*)]{BBMT}). Hence Corollary \ref{Reider} holds if one can show the
following:
\begin{theorem}\label{Kodaira}
Let $X$ be a smooth projective threefold defined over an
algebraically closed field $k$, and let $H$ be an ample divisor on
$X$. Assume that Bogomolov's inequality and Conjecture
\ref{Conjecture} holds for $(X,H)$. Then we have $$H^1(X,
\mathcal{O}_X(K_X+H))=0.$$
\end{theorem}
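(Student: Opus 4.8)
The plan is to reduce the statement to a single vanishing on $X$, translate that into an $\Ext^1$ in a tilted heart, and obstruct a nonzero class in it by producing a tilt-semistable object whose invariants violate Conjecture \ref{Conjecture}.

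First, by Serre duality $H^1(X,\mathcal{O}_X(K_X+H))\cong H^2(X,\mathcal{O}_X(-H))^{\vee}$, so it suffices to show $H^2(X,\mathcal{O}_X(-H))=0$, i.e. $\Hom_{\D^b(X)}(\mathcal{O}_X,\mathcal{O}_X(-H)[2])=0$. Set $\beta=-1/2$. Since $\mu_H(\mathcal{O}_X)=0>\beta$ and $\mu_H(\mathcal{O}_X(-H))=-1\le\beta$, both $\mathcal{O}_X$ and $\mathcal{O}_X(-H)[1]$ lie in the heart $\Coh^{\beta H}(X)$ (which is well defined, Bogomolov's inequality being assumed, by Theorem \ref{thm2.5}), and both are $\nu_{\alpha,\beta}$-stable for every $\alpha>0$ by the tilt-stability of line bundles (\cite[Corollary 3.11]{BMS}, \cite[Theorems 1.3 and 1.4]{Sun1}). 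As both objects belong to the heart, $\Hom_{\D^b(X)}(\mathcal{O}_X,\mathcal{O}_X(-H)[2])=\Ext^1_{\Coh^{\beta H}(X)}(\mathcal{O}_X,\mathcal{O}_X(-H)[1])$, so it is enough to prove this $\Ext^1$ vanishes.

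Suppose not, and fix a nonsplit extension $0\to\mathcal{O}_X(-H)[1]\to E\to\mathcal{O}_X\to0$ in $\Coh^{\beta H}(X)$. Then $\ch(E)=(0,H,-\tfrac12 H^2,\tfrac16 H^3)$, and a short computation at $\beta=-1/2$ gives $\nu_{\alpha,\beta}(E)=0$ for all $\alpha>0$, together with $\ch_3^{\beta}(E)=\tfrac1{24}H^3$ and $\tfrac{\alpha^2}{6}H^2\ch_1^{\beta}(E)=\tfrac{\alpha^2}{6}H^3$. Hence, if $E$ is $\nu_{\alpha,\beta}$-semistable for some $\alpha<\tfrac12$, Conjecture \ref{Conjecture} forces $\tfrac1{24}\le\tfrac{\alpha^2}{6}$, i.e. $\alpha\ge\tfrac12$, a contradiction; this will show the $\Ext^1$ is zero and finish the proof. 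It therefore remains to exhibit such an $\alpha$. Along the line $\beta=-1/2$ one computes $\nu_{\alpha,\beta}(\mathcal{O}_X)=\tfrac14-\alpha^2$ and $\nu_{\alpha,\beta}(\mathcal{O}_X(-H)[1])=\alpha^2-\tfrac14$, both equal to $\nu_{\alpha,\beta}(E)=0$ exactly at $\alpha=\tfrac12$. For $\alpha>\tfrac12$ the subobject $\mathcal{O}_X(-H)[1]\subset E$ has slope $\alpha^2-\tfrac14>0$, so $E$ is not $\nu_{\alpha,\beta}$-semistable; at $\alpha=\tfrac12$ the two-step filtration $0\subset\mathcal{O}_X(-H)[1]\subset E$ has both factors $\nu_{1/2,\beta}$-stable of slope $0$, so $E$ is strictly $\nu_{1/2,\beta}$-semistable and $\alpha=\tfrac12$ is a wall for $E$ on this line.

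By Proposition \ref{Wall} the Harder--Narasimhan filtration of $E$ is constant on a left neighbourhood $(\tfrac12-\varepsilon,\tfrac12)$; specializing to $\alpha=\tfrac12$ and using that the Jordan--Hölder factors of $E$ there are precisely the stable objects $\mathcal{O}_X(-H)[1]$ and $\mathcal{O}_X$, any subobject of $E$ that could be Harder--Narasimhan-destabilizing for $\alpha\in(\tfrac12-\varepsilon,\tfrac12)$ must specialize at $\alpha=\tfrac12$ to an extension of those factors of slope $0$, hence equal $0$, $\mathcal{O}_X(-H)[1]$ or $E$; and $\mathcal{O}_X(-H)[1]$ has slope $\alpha^2-\tfrac14<0$ there. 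Thus $E$ is $\nu_{\alpha,\beta}$-semistable for $\alpha\in(\tfrac12-\varepsilon,\tfrac12)$, giving the required contradiction. The main obstacle is exactly this last step: ruling out all destabilizing subobjects of $E$ just below the wall $\alpha=\tfrac12$. The wall-crossing formalism of \cite{BMS} makes this routine, but carrying it out cleanly requires describing the subobjects of $\mathcal{O}_X$ and of $\mathcal{O}_X(-H)[1]$ in the tilted heart and invoking the vanishing of $\Hom$ between tilt-stable objects with ordered slopes (and, if needed, the generalized Bogomolov inequality of Theorem \ref{thm2.9}).
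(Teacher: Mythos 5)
Your overall strategy (contradict Conjecture \ref{Conjecture} by showing the extension object attached to a nonzero class is tilt-semistable with $\nu=0$ just below the wall $\alpha=\tfrac12$) is viable and is genuinely different from the paper's: the paper works at $\beta=+\tfrac12$ with $0\to\mathcal{O}_X[1]\to E_\xi\to\mathcal{O}_X(H)\to0$, uses the Conjecture to force \emph{instability} below the wall, then identifies the destabilizing subobject as $I_Z(H)$ (Proposition \ref{prop}) and deduces that the extension splits. But your write-up has a genuine gap exactly at the step you yourself flag as the main obstacle, and the justification you sketch for it is wrong as stated. A destabilizing subobject $A\subset E$ for $\alpha\in(\tfrac12-\varepsilon,\tfrac12)$ is, at the wall, a subobject of slope $0$ of the strictly semistable $E$, so its Jordan--H\"older factors form a sub-multiset of $\{\mathcal{O}_X,\mathcal{O}_X(-H)[1]\}$; the possible values are therefore $0$, $\mathcal{O}_X(-H)[1]$, $E$ \emph{and also} $\mathcal{O}_X$, which you omit. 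The omitted case is precisely the dangerous one: $\nu_{\alpha,-1/2}(\mathcal{O}_X)=\tfrac14-\alpha^2>0$ for $\alpha<\tfrac12$, so it cannot be dismissed on slope grounds, and if $\mathcal{O}_X$ embedded into $E$ in $\Coh^{-\frac12 H}(X)$ your claimed semistability would simply fail.

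To close the gap you must show $\Hom(\mathcal{O}_X,E)=0$. Applying $\Hom(\mathcal{O}_X,-)$ to the triangle $\mathcal{O}_X(-H)[1]\to E\to\mathcal{O}_X\xrightarrow{\ \xi\ }\mathcal{O}_X(-H)[2]$, the map $\Hom(\mathcal{O}_X,\mathcal{O}_X)\to\Hom(\mathcal{O}_X,\mathcal{O}_X(-H)[2])$ is injective because it sends $\mathrm{id}$ to $\xi\neq0$, so $\Hom(\mathcal{O}_X,E)$ is a quotient of $\Hom(\mathcal{O}_X,\mathcal{O}_X(-H)[1])=H^1(X,\mathcal{O}_X(-H))$; the latter vanishes because $\mathcal{O}_X$ and $\mathcal{O}_X(-H)[1]$ are non-isomorphic objects of $\Coh^{-\frac12 H}(X)$ that are both $\nu_{\frac12,-\frac12}$-stable of the same slope $0$ (tilt-stability of line bundles with $\overline{\Delta}_H=0$, which needs Theorem \ref{thm2.9} and hence the assumed Bogomolov inequality). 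With this extra argument, together with the (correct) exclusion of $\mathcal{O}_X(-H)[1]$ by its negative slope below the wall, your proof does go through; note that it uses the non-splitness of $\xi$ in an essential way, which your sketch never does, and that "the wall-crossing formalism makes this routine" is not a substitute for it. As written, however, the semistability of $E$ on $(\tfrac12-\varepsilon,\tfrac12)$ is unproved, so the argument is incomplete.
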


We follow the method in \cite{AB11, BBMT} to prove the above
theorem, but avoid to use the dualizing functor. One observes that
if $H^1(X, \mathcal{O}_X(K_X+H))\neq0$, then by Serre duality, we
have
$$\Ext^2(\mathcal{O}_X(H),\mathcal{O}_X)=\Ext^1(\mathcal{O}_X(H),\mathcal{O}_X[1])\neq0.$$
Take a non-zero element
$\xi\in\Ext^1(\mathcal{O}_X(H),\mathcal{O}_X[1])$. It gives a
non-trivial exact sequence in $\Coh^{\frac{1}{2}H}(X)$:
\begin{equation}\label{5.1}
0\rightarrow\mathcal{O}_X[1]\xrightarrow{f}
E_{\xi}\rightarrow\mathcal{O}_X(H)\rightarrow0.
\end{equation}
We will study the $\nu_{\alpha,\beta}$-stability of $E_{\xi}$ for
$\alpha>0$ and $\beta=\frac{1}{2}$.

\begin{lemma}\label{lemma5.2}
The object $E_{\xi}\in\Coh^{\frac{1}{2}H}(X)$ satisfies the
following:
\begin{enumerate}
\item $\ch^{\frac{1}{2}}(E_{\xi})=(0, H, 0, \frac{1}{24}H^3)$.
\item If $\alpha>\frac{1}{2}$, then (\ref{5.1}) destabilizes
$E_{\xi}$ with respect to $\nu_{\alpha,\frac{1}{2}}$.
\item If $\alpha=\frac{1}{2}$, then $E_{\xi}$ is
$\nu_{\alpha,\frac{1}{2}}$-semistable.
\item $E_{\xi}$ is not
$\nu_{\alpha,\frac{1}{2}}$-semistable for $0<\alpha<\frac{1}{2}$.
\end{enumerate}
\end{lemma}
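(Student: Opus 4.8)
The plan is to treat the four items separately. Items (1)--(3) are computations with twisted Chern characters and slopes together with the formal behaviour of weak Bridgeland stability conditions, while (4) is where the Bogomolov-Gieseker inequality of Conjecture \ref{Conjecture} (a hypothesis of Theorem \ref{Kodaira}) enters.

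For (1) I would apply additivity of the Chern character to (\ref{5.1}), getting $\ch(E_\xi)=\ch(\mathcal{O}_X(H))-\ch(\mathcal{O}_X)=(0,H,\tfrac{1}{2}H^2,\tfrac{1}{6}H^3)$, and then twist by $e^{-\frac{1}{2}H}$ to obtain $\ch^{\frac{1}{2}}(E_\xi)=(0,H,0,\tfrac{1}{24}H^3)$; in particular $H\ch_2^{\frac{1}{2}}(E_\xi)=0=H^3\ch_0^{\frac{1}{2}}(E_\xi)$ and $H^2\ch_1^{\frac{1}{2}}(E_\xi)=H^3>0$, so $\nu_{\alpha,\frac{1}{2}}(E_\xi)=0$ for every $\alpha>0$. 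The same twist applied to $\mathcal{O}_X[1]$ and $\mathcal{O}_X(H)$ gives $\ch^{\frac{1}{2}}(\mathcal{O}_X[1])=(-1,\tfrac{1}{2}H,-\tfrac{1}{8}H^2,\tfrac{1}{48}H^3)$ and $\ch^{\frac{1}{2}}(\mathcal{O}_X(H))=(1,\tfrac{1}{2}H,\tfrac{1}{8}H^2,\tfrac{1}{48}H^3)$, whence
$$\nu_{\alpha,\frac{1}{2}}(\mathcal{O}_X[1])=\alpha^2-\frac{1}{4},\qquad \nu_{\alpha,\frac{1}{2}}(E_\xi)=0,\qquad \nu_{\alpha,\frac{1}{2}}(\mathcal{O}_X(H))=\frac{1}{4}-\alpha^2 .$$

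For (2): when $\alpha>\frac{1}{2}$ the subobject $\mathcal{O}_X[1]\hookrightarrow E_\xi$ of (\ref{5.1}), which is proper because $\xi\neq 0$, has slope $\alpha^2-\tfrac{1}{4}>0=\nu_{\alpha,\frac{1}{2}}(E_\xi)$, so (\ref{5.1}) destabilizes $E_\xi$. For (4): if $0<\alpha<\frac{1}{2}$ and $E_\xi$ were $\nu_{\alpha,\frac{1}{2}}$-semistable, then, since $\nu_{\alpha,\frac{1}{2}}(E_\xi)=0$, Conjecture \ref{Conjecture} would give $\ch_3^{\frac{1}{2}}(E_\xi)\le\tfrac{\alpha^2}{6}H^2\ch_1^{\frac{1}{2}}(E_\xi)$, that is, $\tfrac{1}{24}H^3\le\tfrac{\alpha^2}{6}H^3$, forcing $\alpha\ge\tfrac{1}{2}$, a contradiction; hence $E_\xi$ is not $\nu_{\alpha,\frac{1}{2}}$-semistable. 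For (3): at $\alpha=\frac{1}{2}$ all three slopes above equal $0$; both $\mathcal{O}_X$ and $\mathcal{O}_X(H)$ are $\mu_H$-stable with vanishing $\overline{\Delta}_H$, so by \cite[Corollary 3.11]{BMS} (or \cite[Theorem 1.3, 1.4]{Sun1}) the objects $\mathcal{O}_X[1]$ (since $\tfrac{1}{2}>\mu_H(\mathcal{O}_X)=0$) and $\mathcal{O}_X(H)$ (since $\tfrac{1}{2}<\mu_H(\mathcal{O}_X(H))=1$) are $\nu_{\frac{1}{2},\frac{1}{2}}$-stable, both of slope $0$. Since Bogomolov's inequality holds for $(X,H)$, $\sigma_{\frac{1}{2},\frac{1}{2}}$ is a weak Bridgeland stability condition by Theorem \ref{thm2.5}, so the $\nu_{\frac{1}{2},\frac{1}{2}}$-semistable objects of slope $0$ form an extension-closed subcategory; as $E_\xi$ is an extension of two such objects by (\ref{5.1}), it is $\nu_{\frac{1}{2},\frac{1}{2}}$-semistable.

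The main, and essentially the only, non-routine point is (4): one has to notice that $\nu_{\alpha,\frac{1}{2}}(E_\xi)=0$ holds identically in $\alpha$, so the Bogomolov-Gieseker inequality is available for all $\alpha>0$ and, compared with the explicit value $\ch_3^{\frac{1}{2}}(E_\xi)=\tfrac{1}{24}H^3$, exactly pins down the critical radius $\alpha=\tfrac{1}{2}$. Everything else is bookkeeping with twisted Chern characters and the elementary behaviour of $\nu_{\alpha,\beta}$ under passing to sub- and quotient objects in the abelian category $\Coh^{\frac{1}{2}H}(X)$.
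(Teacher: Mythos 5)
Your proposal is correct, and it is essentially the argument behind the paper's proof, which consists only of the citation to \cite[Proposition 3.1]{BBMT}: the Chern character and slope computations, the destabilizing subobject $\mathcal{O}_X[1]$ for $\alpha>\tfrac{1}{2}$, semistability at $\alpha=\tfrac{1}{2}$ via tilt-stability of $\mathcal{O}_X[1]$ and $\mathcal{O}_X(H)$ (slope $0$) and extension-closedness of semistables of equal slope, and the violation of the hypothesized inequality of Conjecture \ref{Conjecture} for $0<\alpha<\tfrac{1}{2}$ are exactly the steps of that proof. The only point worth making explicit is that $\nu_{\alpha,\frac{1}{2}}(E_\xi)=0$ for every $\alpha>0$, so the conjectural inequality is indeed applicable, which you do verify.
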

\begin{proof}
See \cite[Proposition 3.1]{BBMT}.
\end{proof}

By Lemma \ref{lemma5.2} and Proposition \ref{Wall}, there exists an
exact sequence in $\Coh^{\frac{1}{2}H}(X)$
$$0\rightarrow A\rightarrow E_{\xi}\rightarrow F\rightarrow0$$
with the following properties:
\begin{itemize}
\item $A$ is $\nu_{\frac{1}{2},\frac{1}{2}}$-semistable with $\nu_{\frac{1}{2},\frac{1}{2}}(A)=0$;
\item $\nu_{\alpha,\frac{1}{2}}(A)>0$ if $\alpha<\frac{1}{2}$.
\end{itemize}

\begin{proposition}\label{prop}
The object $A$ is of the form $I_Z(H)$ for some zero-dimensional
subscheme $Z\subset X$.
\end{proposition}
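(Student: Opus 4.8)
The plan is to follow \cite{BBMT}: first I would pin down the numerical invariants of $A$, and then use the torsion pair $(\mathcal{T}_{\frac12 H},\mathcal{F}_{\frac12 H})$ together with Bogomolov's inequality — available here through Theorem~\ref{thm2.9} — to identify $A$ with an ideal-sheaf twist of $\mathcal{O}_X(H)$.

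\emph{Step 1 (numerical invariants).} Since $A\hookrightarrow E_{\xi}$ and $E_{\xi}$ is $\nu_{\frac12,\frac12}$-semistable with $\nu_{\frac12,\frac12}(E_{\xi})=0$ by Lemma~\ref{lemma5.2}(3), the quotient $F$ is also $\nu_{\frac12,\frac12}$-semistable with $\nu_{\frac12,\frac12}(F)=0$; in particular $H^2\ch_1^{\frac12}(A)>0$ and $H^2\ch_1^{\frac12}(F)>0$, and the vanishing of the slopes reads $H\ch_2^{\frac12}(A)=\tfrac18H^3\ch_0(A)$ and likewise for $F$. The hypothesis $\nu_{\alpha,\frac12}(A)>0$ for $\alpha<\frac12$ then forces $\ch_0(A)\geq1$. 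Applying $\overline{\Delta}_H\geq0$ to $A$ and to $F$, using $\ch^{\frac12}(E_{\xi})=(0,H,0,\tfrac1{24}H^3)$ (so that $\ch_0(F)=-\ch_0(A)$ and $0\leq H^2\ch_1^{\frac12}(F)=H^3-H^2\ch_1^{\frac12}(A)$), and adding the two inequalities, I would obtain $\ch_0(A)\leq1$. Hence $\ch_0(A)=1$, $H^2\ch_1(A)=H^3$, $H\ch_2(A)=\tfrac12H^3$, and $\overline{\Delta}_H(A)=0$.

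\emph{Step 2 ($A$ is torsion-free of rank $1$).} The long exact cohomology sequence of $0\to A\to E_{\xi}\to F\to0$, together with $\mathcal{H}^{-1}(E_{\xi})=\mathcal{O}_X$ and $\mathcal{H}^0(E_{\xi})=\mathcal{O}_X(H)$ read off from (\ref{5.1}), shows $\mathcal{H}^{-1}(A)\hookrightarrow\mathcal{O}_X$, so $\mathcal{H}^{-1}(A)$ has rank $\leq1$. If it had rank $1$, say with $c_1=-D$, then $\ch_0(\mathcal{H}^0(A))=2$ and $H^2\ch_1^{\frac12}(\mathcal{H}^0(A))=-H^2D$; positivity of $H^2\ch_1^{\frac12}$ on $\Coh^{\frac12 H}(X)$ forces $D=0$, whence $\mu_H(\mathcal{H}^0(A))=\frac12$, contradicting $\mathcal{H}^0(A)\in\mathcal{T}_{\frac12 H}$. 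So $\mathcal{H}^{-1}(A)=0$ and $A$ is a sheaf lying in $\mathcal{T}_{\frac12 H}$. A nonzero torsion subsheaf $T$ of $A$ would be a subobject of $A$ in $\Coh^{\frac12 H}(X)$: torsion supported in codimension $\geq2$ gives $\nu_{\frac12,\frac12}(T)=+\infty$, impossible by semistability of $A$; and torsion with a divisorial part $D>0$ is excluded by comparing $\overline{\Delta}_H(A/T)\geq0$ and $\nu_{\frac12,\frac12}(A/T)\geq0$ with the values from Step~1, which force $H^2D\leq0$. Hence $A$ is torsion-free of rank $1$.

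\emph{Step 3 ($A\cong I_Z(H)$, $\dim Z=0$).} Being torsion-free of rank $1$ on the smooth $X$, $A$ embeds in its reflexive hull, a line bundle $\mathcal{O}_X(M)$, with cokernel supported in codimension $\geq2$; thus $A=I_Z(M)$ with $\dim Z\leq1$ and $\ch_1(A)=M$. The induced map $\phi\colon A=\mathcal{H}^0(A)\to\mathcal{H}^0(E_{\xi})=\mathcal{O}_X(H)$ is nonzero: otherwise $A$ would be a subobject of $\ker\bigl(E_{\xi}\to\mathcal{O}_X(H)\bigr)=\mathcal{O}_X[1]$ in $\Coh^{\frac12 H}(X)$, which is impossible because $\mathcal{O}_X[1]$ is $\nu_{\alpha,\frac12}$-stable (e.g.\ \cite[Corollary~3.11]{BMS}) with $\nu_{\alpha,\frac12}(\mathcal{O}_X[1])=\alpha^2-\tfrac14<0$ for $\alpha<\frac12$, whereas $\nu_{\alpha,\frac12}(A)>0$ there. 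A rank count in $0\to\mathcal{O}_X\to\mathcal{H}^{-1}(F)\to A\xrightarrow{\phi}\mathcal{O}_X(H)\to\mathcal{H}^0(F)\to0$ shows that the kernel of $\phi$ has rank $\ch_0(\mathcal{H}^0(F))$; since the image of $\phi$ is a nonzero subsheaf of the rank-one $\mathcal{O}_X(H)$, it has rank $1$, so $\ch_0(\mathcal{H}^0(F))=0$ and the kernel of $\phi$ is torsion, hence $0$ as $A$ is torsion-free. Therefore $A\otimes\mathcal{O}_X(-H)\hookrightarrow\mathcal{O}_X$, so $H-M$ is effective with $H^2(H-M)=H^3-H^2\ch_1(A)=0$; ampleness of $H$ gives $M=H$, so $A=I_Z(H)$. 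Finally $\overline{\Delta}_H(I_Z(H))=2H^3\,(H\cdot\gamma)$, where $\gamma$ is the class of the one-dimensional part of $Z$; since $\overline{\Delta}_H(A)=0$ and $H$ is ample, $\gamma=0$, i.e.\ $Z$ is zero-dimensional.

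The delicate points are Step~2 and the identification $M=H$ in Step~3: excluding a nonzero $\mathcal{H}^{-1}(A)$, excluding torsion in $A$, and upgrading ``$M$ numerically proportional to $H$'' to ``$M=H$'' must all be extracted from the very tight numerical data of Step~1 (notably $H^2\ch_1(A)=H^3$ and $\overline{\Delta}_H(A)=0$), the defining inequalities of the torsion pair, Bogomolov's inequality, and the $\nu_{\alpha,\frac12}$-stability of $\mathcal{O}_X[1]$.
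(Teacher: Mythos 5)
Your overall architecture parallels the paper's (numerics of $A$, then $\mathcal{H}^{-1}(A)=0$ and torsion-freeness, then identification with $I_Z(H)$), but your Step 1 claims strictly more than the paper proves there: the paper only obtains $\ch_0(A)=1$ together with the inequalities $H^3\le H^2\ch_1(A)<\tfrac32H^3$, and then runs a longer case analysis (HN filtration of $\mathcal{H}^0(A)$, torsion part of $A$, the subsheaf relation $A_{tf}\subset\mathcal{O}_X(H)$) to extract $\ch_1=H$ at the end. Your shortcut to the exact equalities $H^2\ch_1(A)=H^3$, $\overline{\Delta}_H(A)=0$ rests entirely on $\overline{\Delta}_H(F)\ge0$, which you justify by asserting that $F$ is $\nu_{\frac12,\frac12}$-semistable ``because $E_\xi$ is.'' That inference is invalid: quotients of semistable objects need not be semistable, and it is false precisely in the situation being analyzed --- at the end of the paper's argument $F\cong\mathcal{O}_X[1]\oplus\mathcal{O}_Z$, and whenever $Z\ne\emptyset$ the subobject $\mathcal{O}_Z$ has $\nu_{\frac12,\frac12}=+\infty$, so $F$ is not $\nu_{\frac12,\frac12}$-semistable. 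Since your Steps 2 and 3 lean on the equalities from Step 1 (e.g.\ ``forces $D=0$'' and ``$H^2(H-M)=0$''), this is a load-bearing gap, not a cosmetic one. It is repairable: every HN factor of $F$ at $(\tfrac12,\tfrac12)$ has $\Re Z_{\frac12,\frac12}=0$, hence is either slope-$0$ semistable (so its discriminant is $\ge0$ by Theorem \ref{thm2.9}) or has $Z_{\frac12,\frac12}=0$, which forces it to be a zero-dimensional sheaf with $H^3\ch_0$, $H^2\ch_1^{\frac12}$, $H\ch_2^{\frac12}$ all vanishing; since the cross-terms of $\overline{\Delta}_H$ between same-slope classes of nonnegative discriminant are nonnegative, $\overline{\Delta}_H(F)\ge0$ follows. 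But this argument (or a citation of the corresponding superadditivity statement along a wall in BMS) must be supplied; as written the step fails.

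Two smaller points in the same spirit. In Step 2, $\overline{\Delta}_H(A/T)\ge0$ together with $\nu_{\frac12,\frac12}(A/T)\ge0$ gives $\bigl(\tfrac12H^3-H^2D\bigr)^2\ge\tfrac14(H^3)^2$, i.e.\ either $H^2D\le0$ or $H^2D\ge H^3$; you need the heart positivity $H^2\ch_1^{\frac12}(A/T)\ge0$, i.e.\ $H^2D\le\tfrac12H^3$, to exclude the second branch before concluding. Also, $\overline{\Delta}_H(A/T)\ge0$ should be justified by classical Bogomolov applied to the rank-one torsion-free (hence $\mu_H$-semistable) sheaf $A/T$, not by Theorem \ref{thm2.9}, since $A/T$ is not known to be tilt-semistable. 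With these patches your route is correct and in fact leaner than the paper's, which deliberately avoids using any inequality for $F$ beyond $H^2\ch_1^{\frac12}(F)>0$ and compensates with the more elaborate case analysis in its Steps 2 and 3.
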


\begin{proof}
\noindent{\bf Step 1.} {\em $\ch_0(A)=1$. }
\medskip

The properties above imply that
$$H\ch_2^{\frac{1}{2}}(A)=\frac{1}{8}H^3\ch_0(A)$$ and
$$H\ch_2^{\frac{1}{2}}(A)-\frac{\alpha^2}{2}H^3\ch_0(A)=\frac{1}{8}H^3\ch_0(A)-\frac{\alpha^2}{2}H^3\ch_0(A)>0$$ if
$\alpha<\frac{1}{2}$. Hence $\ch_0(A)>0$. Expanding
$\ch_2^{\frac{1}{2}}(A)$, one obtains

\begin{equation}\label{5}
H\ch_2(A)=\frac{1}{2}H^2\ch_1(A).
\end{equation}
Applying Theorem \ref{thm2.9} to $A$, one has
$$\frac{1}{2}H^2\ch_1(A)=H\ch_2(A)\leq\frac{(H^2\ch_1(A))^2}{2H^3\ch_0(A)}.$$
Hence
\begin{equation}\label{5.2}
H^2\ch_1(A)\geq H^3\ch_0(A).
\end{equation}

On the other hand, since $\nu_{\alpha,\frac{1}{2}}(F)\neq+\infty$,
we deduce that
$$0<H^2\ch^{\frac{1}{2}}_1(F)=H^2\ch^{\frac{1}{2}}_1(E_{\xi})-H^2\ch^{\frac{1}{2}}_1(A).$$
It follows that
\begin{equation}\label{5.3}
H^2\ch_1(A)-\frac{1}{2}H^3\ch_0(A)<H^3.
\end{equation}
Combining (\ref{5.2}) and (\ref{5.3}), we conclude that
$\ch_0(A)=1$.

\medskip
\noindent{\bf Step 2.} {\em $A$ is a rank one sheaf, i.e.,
$\mathcal{H}^{-1}(A)=0$. }
\medskip

We argue by contradiction. Consider the long exact cohomology
sequence
\begin{equation}\label{exact}
0\rightarrow
\mathcal{H}^{-1}(A)\rightarrow\mathcal{O}_X\rightarrow\mathcal{H}^{-1}(F)
\rightarrow\mathcal{H}^{0}(A)\rightarrow\mathcal{O}_X(H)\rightarrow\mathcal{H}^{0}(F)\rightarrow0
\end{equation}
induced by $0\rightarrow A\rightarrow E_{\xi}\rightarrow
F\rightarrow0$. If $\mathcal{H}^{-1}(A)\neq0$, since
$\mathcal{H}^{-1}(F)$ is torsion free, one sees that
$\mathcal{H}^{-1}(A)=\mathcal{O}_X$ and $\mathcal{H}^{0}(A)$ is a
torsion free sheaf with
\begin{itemize}
\item $\rank\mathcal{H}^{0}(A)=2$;
\item $\ch_i(\mathcal{H}^{0}(A))=\ch_i(A)$ for $i\geq1$.
\end{itemize}

If $\mathcal{H}^{0}(A)$ is $\mu_H$-semistable, Bogomolov's
inequality gives
$$(H^2\ch_1(A))^2=\Big(H^2\ch_1(\mathcal{H}^{0}(A))\Big)^2\geq4H^3\cdot H\ch_2(\mathcal{H}^{0}(A))=4H^3\cdot H\ch_2(A).$$
This and (\ref{5}) imply that $H^2\ch_1(A)\geq2H^3$. But from
(\ref{5.3}), one obtains that $H^2\ch_1(A)<\frac{3}{2}H^3$. It is a
contradiction.

If $\mathcal{H}^{0}(A)$ is not $\mu_H$-semistable, we consider its
Harder-Narasimhan filtration:
$$0\subset M\subset\mathcal{H}^{0}(A)$$ and write
$Q:=\mathcal{H}^{0}(A)/M$. It turns out that $M$ and $Q$ are rank
one torsion free sheaves. By the definition of
$\Coh^{\frac{1}{2}}(X)$, one sees that
$\mu_H^-(\mathcal{H}^{0}(A))>\frac{1}{2}$. Hence
$\mu_H(Q)>\frac{1}{2}$. From the definition of HN-filtration and
$\Coh^{\frac{1}{2}}(X)$, it follows that
\begin{equation*}
H^2\ch_1(M)>\frac{1}{2}H^2\ch_1(A)>H^2\ch_1(Q)>\frac{H^3}{2}.
\end{equation*}
By (\ref{5.3}), we have
$$H^2\ch_1(M)+H^2\ch_1(Q)=H^2\ch_1(A)<\frac{3}{2}H^3.$$ Thus one
deduces
\begin{eqnarray}\label{5.5}
H^3>H^2\ch_1(M)>H^2\ch_1(Q)>\frac{H^3}{2}
\end{eqnarray}
On the other hand, Bogomolov's inequality and (\ref{5}) give
\begin{eqnarray*}
\nonumber\frac{1}{2}H^2\ch_1(M)+\frac{1}{2}H^2\ch_1(Q)&=&H\ch_2(M)+H\ch_2(Q)\\
&\leq&\frac{(H^2\ch_1(M))^2}{2H^3}+\frac{(H^2\ch_1(Q))^2}{2H^3}.
\end{eqnarray*}
One sees either
$\frac{1}{2}H^2\ch_1(M)\leq\frac{(H^2\ch_1(M))^2}{2H^3}$ or
$\frac{1}{2}H^2\ch_1(Q)\leq\frac{(H^2\ch_1(Q))^2}{2H^3}$. It follows
that either $H^2\ch_1(M)\geq H^3$ or $H^2\ch_1(Q)\geq H^3$. This
contradicts (\ref{5.5}).

To sum up, we have $\mathcal{H}^{-1}(A)=0$ and $A$ is rank one
sheaf.

\medskip
\noindent{\bf Step 3.} {\em $A$ is of the form $I_Z(H)$ for some
zero-dimensional subscheme $Z\subset X$. }
\medskip

We first show that $A$ is torsion free. If $A$ is not torsion free,
we denote by $A_t$ (resp. $A_{tf}:=A/A_t$) its torsion part (resp.
torsion free part). Since $A_t$ is a subobject of $E_{\xi}$ in
$\Coh^{\frac{1}{2}}(X)$, by the
$\nu_{\frac{1}{2},\frac{1}{2}}$-stability of $E_{\xi}$, one has
$$\nu_{\frac{1}{2},\frac{1}{2}}(A_t)=\frac{H\ch_2^{\frac{1}{2}}(A_t)}{H^2\ch_1^{\frac{1}{2}}(A_t)}\leq0.$$
This implies
\begin{equation}
H^2\ch_1(A_t)>0~
\mbox{and}~H\ch_2(A_t)-\frac{1}{2}H^2\ch_1(A_t)\leq0.
\end{equation}
By (\ref{5}), we obtain
$$H\ch_2(A_{tf})-\frac{1}{2}H^2\ch_1(A_{tf})\geq0.$$ Bogomolov's
inequality gives
\begin{equation}\label{5.8}
H^2\ch_1(A_{tf})\leq2H\ch_2(A_{tf})\leq\frac{(H^2\ch_1(A_{tf}))^2}{H^3}.
\end{equation}
Hence
\begin{equation}\label{5.9}
H^2\ch_1(A_{tf})\geq H^3.
\end{equation}

On the other hand, since $\mathcal{H}^{-1}(A)=0$ and $\rank
\mathcal{H}^{0}(A)=1$, the long exact sequence (\ref{exact}) gives
two short exact sequences
$$0\rightarrow\mathcal{O}_X\rightarrow\mathcal{H}^{-1}(F)\rightarrow
A_t\rightarrow0$$ and $$0\rightarrow
A_{tf}\rightarrow\mathcal{O}_X(H)\rightarrow\mathcal{H}^{0}(F)\rightarrow0.$$
Hence $A_{tf}$ is a subsheaf of $\mathcal{O}_X(H)$. By (\ref{5.9}),
one sees that $H^2\ch_1(A_{tf})=H^3$ and $\ch_1(A_{tf})=H$. This
shows that the chain inequalities of (\ref{5.8}) must be equalities.
This shows that
\begin{equation}\label{5.10}
H\ch_2(A_t)-\frac{1}{2}H^2\ch_1(A_t)=H\ch_2(A_{tf})-\frac{1}{2}H^2\ch_1(A_{tf})=0.
\end{equation}
Since $\ch_i(A_t)=\ch_i(\mathcal{H}^{-1}(F))$ for $i\geq1$ and
$\mathcal{H}^{-1}(F)$ is torsion free, similar as (\ref{5.8}) and
(\ref{5.9}), one obtains $$H^2\ch_1(\mathcal{H}^{-1}(F))\geq H^3,$$
in contradiction to
$$H^2\ch^{\frac{1}{2}}_1(\mathcal{H}^{-1}(F))=H^2\ch_1(\mathcal{H}^{-1}(F))-\frac{1}{2}H^3\leq0.$$
Therefore we conclude that $A_t=0$.


Then the equalities $\ch_1(A)=\ch_1(A_{tf})=H$ and (\ref{5}) imply
that $A=I_Z(H)$ for some zero-dimensional subscheme $Z\subset X$.
\end{proof}

\begin{proof}(Theorem \ref{Kodaira})
Since $A=I_Z(H)$, one sees that $\mathcal{H}^{-1}(F)=\mathcal{O}_X$
and $\mathcal{H}^{0}(F)=\mathcal{O}_Z$. We obtain an exact sequence
in $\Coh^{\frac{1}{2}}(X)$:
$$0\rightarrow\mathcal{O}_X[1]\rightarrow
F\rightarrow\mathcal{O}_Z\rightarrow0.$$ But
$$\ext^1(\mathcal{O}_Z, \mathcal{O}_X[1])=\hom(\mathcal{O}_X,\mathcal{O}_Z[1])=h^1(\mathcal{O}_Z)=0.$$
This implies $F\cong\mathcal{O}_X[1]\oplus\mathcal{O}_Z$. Hence we
obtain a surjective morphism
$$E_{\xi}\xrightarrow{g}\mathcal{O}_X[1]$$ in
$\Coh^{\frac{1}{2}}(X)$. From $$\Hom(\mathcal{O}_X[1],
I_Z(H))=\Hom(\mathcal{O}_X[1],\mathcal{O}_Z)=0,$$ it follows that
the composition morphism
$$\mathcal{O}_X[1]\xrightarrow{f}E_{\xi}\xrightarrow{g}\mathcal{O}_X[1]
$$ is nontrivial. Thus it is an isomorphism. This implies that the
exact sequence (\ref{5.1}) splits, so that $\xi=0$. This completes
the proof.
\end{proof}

\begin{remark}
Our proof of Proposition \ref{prop} is slightly different from that
of \cite[Proposition 3.3]{BBMT}. We do not use the dualizing functor
here. The reason is that in our situation
$h^1(X,\mathcal{O}_X(K_X+H))=1$ may not hold. Thus the self-duality
of $E_{\xi}$ (see \cite[Proposition 3.2]{BBMT}) cannot be obtained
directly.
\end{remark}

\bibliographystyle{amsplain}

\end{document}